\documentclass[11pt]{article}

\usepackage{fullpage}
\usepackage{amssymb}
\usepackage{amsmath}
\usepackage{amsfonts}
\usepackage{amsthm}
\usepackage{url}
\usepackage{graphicx}
\usepackage{subfig}
\usepackage{fancyhdr}
\usepackage{url}

\usepackage[utf8]{inputenc}
\usepackage[english]{babel}
\usepackage{hyperref}

\pagestyle{fancy}
\fancyhf{}

\fancyfoot[C]{Copyright \textcopyright Hisashi Kobayashi 2021}
\fancyfoot[R]{\thepage}

\newcommand{\nc}{\newcommand}
\nc{\heading}[1]{\begin{center} \large \bf #1 \end{center}}

\newcommand{\oA}{\overline{A}}
\newcommand{\oB}{\overline{B}}
\newcommand{\oI}{\overline{I}}

\newcommand{\oR}{\overline{R}}

\newcommand{\Ex}{\mathsf{E}}
\renewcommand{\Pr}{\mathsf{P}}

\theoremstyle{plain}
\newtheorem{prop}{Proposition}

\newtheorem{corollary}{Corollary}

\usepackage{glossaries}
\makeglossaries
\newglossaryentry{latex}
{
    name=latex,
    description={Is a mark up language specially suited
    for scientific documents}
}
\newglossaryentry{maths}
{
    name=mathematics,
    description={Mathematics is what mathematicians do}
}
\begin{document}


\date{March 30, 2021}
\title{Stochastic Modeling of an Infectious Disease \\
 {Part III-B: Analysis of the Time-Nonhomogeneous BDI Process and Simulation Experiments of both BD and BDI Processes.
 } }
\author{Hisashi Kobayashi\footnote{The Sherman Fairchild University Professor of Electrical Engineering and Computer Science, Emeritus.
Email: Hisashi@Princeton.EDU, Website: \url{https://hp.hisashikobayashi.com/}, Wikipedia: \url{https://en.wikipedia.org/wiki/Hisashi_Kobayashi} 
} \\
   Dept. of  Electrical Engineering \\
   Princeton University \\
   Princeton, NJ 08544, U.S.A.}

\maketitle

\begin{abstract}
In Section 1, we revisit the partial differential equation (PDE) for the probability generating function (PGF) of the time-nonhomogeneous BDI (birth-and-death-with-immigration) process and derive a closed form solution. To the best of our knowledge, this is a new mathematical result.\footnote{See Bailey \cite{bailey:1964}, p. 115: ``In general the solution of such an equation is likely to be difficult to obtain, and we shall not undertake the discussion of any special cases here."} We state this result as Proposition \ref{prop:nonhomo-BDI}.  We state as Corollary \ref{coro:nonhomo-BDI-NBD} that the \emph{negative binomial distribution} of the time-homogeneous BDI process discussed in Part I \cite{kobayashi:2020a-arXiv} extends to the general time-nonhomogeneous case, provided that the ratio of the immigration rate $\nu(t)$ to the birth rate $\lambda(t)$ is a constant. 

In section \ref{subsec:Bartlett-Bailey}, we take up the heuristic approach discussed by Bartlett and Bailey (1964), and carry it out to completion by arriving at the solution obtained above,.

In Section \ref{sec:nonhomo-BD-simulation}, we present the results of our extensive simulation experiments of the time-nonhomogeneous BD process that was analyzed in Part III-A \cite{kobayashi:2021a} and confirm our analytic results.

In Section \ref{sec:nonhomo-BDI-simulation}, we undertake similar simulation experiments for the BDI process that is analyzed in Section \ref{sec:non-homo-BDI}.

As we discuss in Section \ref{sec:discussion}, our stochastic model now seems more promising and powerful than has been heretofore expected.   

In Appendix B, a closed form solution for the M(t)/M(t)/$\infty$\footnote{The first M(t) stands for the time-varying Poisson arrivals and the second M(t) means the exponential service time, which is also time varying.   This notation should not be confused with the function $M(t)=\int_0^t \mu(u)e^{-s(u)}\,du$ defined in (60) of \cite{kobayashi:2021a}.} queue is obtained, as a special case of this BDI process model.

\end{abstract}

\paragraph{\em Keywords:}
Time-nonhomogeneous stochastic model; BDI (birth and death with immigration) process;  Partial differential equation (PDE); Negative binomial distribution; M(t)/M(t)/$\infty$ queue, Infinitely divisible distribution.
\tableofcontents

\section{General Solution for the Time-Nonhomogenous BDI Process Model}\label{sec:non-homo-BDI}

In this section we will obtain closed form expressions for the PGF (probabilty generating function) and the time-dependent probability distribution for the general time-nonhomogeneous birth-death process with immigration. To the best of our knowledge, these are new results: the closed form solution has been heretofore believed to be difficult to obtain for the general time-nonhomogeneous BDI process \footnote{See Bailey \cite{bailey:1964}, p. 115.} 

\subsection{Revisit the Partial Differential Equation for the PGF}\label{subsec:Revisit-PDE}
Recall that in \cite{kobayashi:2021a} the PDE (partial differential equation) (50), we first worked with the left and middle term of the auxiliary differential equations (52), ibid.
\begin{align}
dt= - \frac{dz}{(\lambda(t)z-\mu(t))(z-1)}=\frac{dG}{\nu(t)(z-1)G(z,t)},\label{52-Part-IIIA}
\end{align}
and obtained the first solution (58), ibid.
\begin{equation}
 \fbox{
\begin{minipage}{4.5cm}
\[
\frac{e^{-s(t)}}{z-1}-L(t)=C_1.   
\] 
\end{minipage}
} \label{1st-solution-IIIA}
\end{equation}

Let us now equate the leftmost and rightmost terms of (\ref{52-Part-IIIA}):
\begin{align}
\frac{dG(z,t)}{\nu(t)G(z,t)(z-1)}=dt,
\end{align} 
which can be written as
\begin{align}
\frac{d\log G(z, t)}{dt}=\nu(t)(z-1). \label{dlogG}
\end{align}
A critical next step is to substitute  (\ref{1st-solution-IIIA}) into (\ref{dlogG}), leading to an  expression which includes $C_1$, but not the variable $z$
\begin{align}
\frac{d\log G(z, t)}{dt}=\frac{\nu(t)e^{-s(t)}}{C_1+L(t)}, \label{eqn[without=z}
\end{align}
which readily leads to the second solution:
\begin{equation}
 \fbox{
\begin{minipage}{8cm}
\[
G(z,t)\exp\left(-\int_0^t\frac{\nu(u)e^{-s(u)}}{C_1+L(u)}\,du\right)=C_2.  
\] 
\end{minipage}
} \label{voila-C_2} 
\end{equation}
This form of the second solution is unusual in the sense that $C_1$ of the first solution appears in this solution, and this might have deterred Bailey and other investigators from proceeding further.

But by writing the functional relation between $C_1$ and $C_2$ as 
\begin{align}
C_2=f(C_1), \label{C_2-C_1}
\end{align}
and setting $t=0$ in (\ref{1st-solution-IIIA}) and (\ref{voila-C_2}), and using the initial condition $G(z,0)=z^{I_0}$, we arrive at
\begin{align}
f\left(\frac{1}{z-1}\right)=G(z,0)=z^{I_0},
\end{align}
from which we find
\begin{equation}
 \fbox{
\begin{minipage}{4cm}
\[
f(y)=\left(1+\frac{1}{y}\right)^{I_0}.  
\] 
\end{minipage}
} \label{func-form-f} 
\end{equation}
It will be worth noting that this functional form is exactly the same as (69) of \cite{kobayashi:2021a} that we obtained for the BD process. 

From (\ref{1st-solution-IIIA}), (\ref{voila-C_2}), (\ref{C_2-C_1}) and (\ref{func-form-f}), we finally obtain what we have been after, which we state below as a proposition:

\begin{prop}[The representation of the PGF of the general time-nonhomogeneous BDI process]\label{prop:nonhomo-BDI}
The PGF of the BDI process with the initial value is given by
\begin{equation}
\fbox{
\begin{minipage}{8cm}
\[
G_{BDI:I_0}(z,t)=G_{BD:I_0}(z,t) G_{ID:0}(z,t),  
\] 
\end{minipage}
} \label{G_BDI}
\end{equation}
where the first term in RHS (right hand side) is equivalent to the PGF of the BD (birth-death process) with the initial value of $I_0$, but no immigration in $(0, t)$ \footnote{The time-homogeneous version of the BD process is also referred to as the FA (Feller-Arley) process. See \cite{kobayashi:2020b-arXiv} Section 3.1.}, given in (70) of \cite{kobayashi:2021a},
\begin{align}
G_{BD:I_0}(z, t)\triangleq \left(1+\frac{e^{s(t)}(z-1)}{1-e^{s(t)}L(t)(z-1)}\right)^{I_0}.
\label{G_BD}
\end{align}
and the second term in the RHS of (\ref{G_BDI}) represents the PGF contributed by the \emph{immigrants and their descendants}, which we denote $G_{ID:0}(z,t)$, with 0 indicating no immigrants at time 0:
\begin{equation}
\fbox{
\begin{minipage}{10cm}
\[
G_{ID:0}(z, t)=\exp\left(\int_0^t\frac{\nu(u)e^{s(t)-s(u)}(z-1)}{1-e^{s(t)}(L(t)-L(u))(z-1)}\,du\right),
\] 
\end{minipage}
} \label{G_Immigration}
\end{equation}

Thus, the BDI process can be expressed as  
\begin{align}
I_{BDI:I_0}(t)=I_{BD:I_0}(t)+I_{ID:0}(t). \label{BDI-two-processes}
\end{align}
where the component processes are statistically independent.  $\Box$
\end{prop}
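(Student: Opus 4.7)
The plan is to simply solve for $G(z,t)$ from the two auxiliary solutions already obtained, then identify the two factors on the right-hand side. From (\ref{voila-C_2}), we have
\begin{align*}
G(z,t) = C_2 \exp\!\left(\int_0^t \frac{\nu(u) e^{-s(u)}}{C_1 + L(u)}\, du\right),
\end{align*}
and from (\ref{C_2-C_1}) and (\ref{func-form-f}), $C_2 = (1 + 1/C_1)^{I_0}$. Both $C_1$ and $C_2$ must now be expressed as functions of $(z,t)$ using the first integral (\ref{1st-solution-IIIA}). The work reduces to two parallel algebraic simplifications.

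First I would evaluate $C_1 + L(u)$ by substituting $C_1 = e^{-s(t)}/(z-1) - L(t)$, obtaining
\begin{align*}
C_1 + L(u) = \frac{e^{-s(t)} - (L(t)-L(u))(z-1)}{z-1},
\end{align*}
so that the integrand in the exponential becomes
\begin{align*}
\frac{\nu(u) e^{-s(u)}}{C_1+L(u)} = \frac{\nu(u)\, e^{s(t)-s(u)}(z-1)}{1 - e^{s(t)}(L(t)-L(u))(z-1)},
\end{align*}
after multiplying numerator and denominator by $e^{s(t)}$. This matches the integrand defining $G_{ID:0}(z,t)$ in (\ref{G_Immigration}) exactly.

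Next, for the prefactor $C_2 = (1 + 1/C_1)^{I_0}$, I would compute
\begin{align*}
1 + \frac{1}{C_1} = 1 + \frac{z-1}{e^{-s(t)} - L(t)(z-1)} = 1 + \frac{e^{s(t)}(z-1)}{1 - e^{s(t)} L(t)(z-1)},
\end{align*}
again by clearing $e^{-s(t)}$. Raising to the power $I_0$ reproduces $G_{BD:I_0}(z,t)$ in (\ref{G_BD}), which is the PGF of the pure BD process with initial value $I_0$ already derived in equation (70) of \cite{kobayashi:2021a}. Multiplying the two factors yields (\ref{G_BDI}).

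The final assertion (\ref{BDI-two-processes}) that $I_{BDI:I_0}(t)$ decomposes as the sum of two statistically independent components follows from the uniqueness of the PGF: a product factorization of a joint PGF into PGFs of two legitimate distributions corresponds to a convolution of those distributions, and hence to the sum of two independent random variables with those marginals. Since $G_{BD:I_0}(z,t)$ is by construction the PGF of the BD sub-population descending from the $I_0$ initial members, and $G_{ID:0}(z,t)$ reduces to $1$ when $\nu(\cdot)\equiv 0$, it is natural to interpret the second factor as the PGF of the sub-population generated by the immigrants and their descendants. The only non-routine step is the first algebraic simplification of $C_1 + L(u)$; all remaining work is bookkeeping.
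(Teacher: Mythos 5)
Your proposal is correct and follows essentially the same route as the paper: the paper's derivation consists precisely of combining the first integral (\ref{1st-solution-IIIA}), the second integral (\ref{voila-C_2}), the functional relation (\ref{C_2-C_1}), and the form (\ref{func-form-f}) of $f$, and your two algebraic simplifications (of $C_1+L(u)$ and of $1+1/C_1$) are exactly the steps the paper leaves implicit. The justification of the independent-sum decomposition via uniqueness of the PGF and convolution is also consistent with what the paper asserts without further argument.
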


On using the functions $\alpha(t)$ and $\beta$ defined earlier, \footnote{Recall the definitions 
\begin{align}
\alpha(t)\triangleq \frac{M(t)}{1+M(t)},~~~\mbox{and}~~~\beta(t) \triangleq \frac{L(t)}{1+M(t)},\label{def-alpha-beta}
\end{align}
with $L(t)$ and $M(t)$ being given by
\begin{align}
L(t)\triangleq \int_0^t \lambda(u) e^{-s(u)}\,du, ~~~\mbox{and}~~~M(t)\triangleq \int_0^t\mu(u)e^{-s(u)}\,du,       \label{def-L(t)-M(t)-nonhomo}
\end{align}
and
\begin{align}
s(t)\triangleq \int_0^t a(u)\,du=\int_0^t (\lambda(u)-\mu(u))\,du.  \label{def-s(t)}
\end{align}}
we find an alternative expression of (\ref{G_BD}), as we obtained in (78) of \cite{kobayashi:2021a}:
\begin{align}
G_{BD:I_0}(z, t)=\left(\frac{\alpha(t)+(1-\alpha(t)-\beta(t))z}{1-\beta(t)z}\right)^{I_0}.
\end{align}

In order to better understand the new term (\ref{G_Immigration}), we write the denominator of the integrand in (\ref{G_Immigration}) as
\begin{align}
A(u)\triangleq 1-e^{s(t)}(L(t)-L(u))(z-1),  \label{def-A(u)}
\end{align}
and its derivative w.r.t. the variable $u$ as
\begin{align}
A'(u)=\lambda(u)e^{s(t)-s(u)}(z-1). \label{A'(u)}
\end{align}
Thus, we can transform (\ref{G_Immigration}) to
\begin{align}
G_{ID:0}(z, t)&=\exp\left(\int_0^t r(u)\frac{A'(u)}{A(u)}\,du\right),~~~\mbox{where}~~~r(u)=\frac{\nu(u)}{\lambda(u)}.\label{G_I-with-r(u)}
\end{align}
On defining $B(u)=\log A(u)$, we have
\begin{align}
G_{I:0}(z,t)&=\exp\left(\int_0^t r(u)B'(u)\,du\right)=\exp\left(\left[r(u)B(u)\right]_{u=0}^t-\int_0^t r'(u)B(u)\,du\right)\nonumber\\
&= \exp\left(\left[\log A(u)^{r(u)}\right]_{u=0}^t \right)\cdot\exp\left(-\int_0^t r'(u)\log A(u)\,du\right)
\label{G_I-near-final}
\end{align}
Noting that
\begin{align}
A(t)=1,~~\mbox{and}~~~A(0)=1-e^{s(t)}L(t)(z-1),
\end{align} 
we arrive at the following expression:
\begin{equation}
\fbox{
\begin{minipage}{8.5cm}
\[
G_{ID:0}(z,t)=G_{NB(r(0),\beta(t))}G_C(z, t) ,
\]
\end{minipage}
}  \label{G_ID-our-result}
\end{equation}
where the first term
\begin{align}
G_{NB(r(0),\beta(t))}&=\left(\frac{1}{1-e^{s(t)}L(t)(z-1)}\right)^{r(0)}
= \left(\frac{1-\beta(t)}{1-\beta(t)z}\right)^{r(0)},
\end{align}
is the PGF of a NBD (negative binomial distributed) process with the parameters $(r(0), \beta(t))$.\footnote{In order to arrive at the second expression, we use the identity $L(t)-M(t)=1-e^{-s(t)}$ as shown in (61) \cite{kobayashi:2021a} .} The second term of the RHS of (\ref{G_ID-our-result}) defined by
\begin{align}
G_C(z,t)& \triangleq \exp\left(-\int_0^tr'(u) \log A(u)\,du\right),\label{C(z,t)}
\end{align}
expresses the effect of $r(t)$ not being a constant. Clearly if $r'(t)=0$, then $C(z,t)=1$ for all $z$ and $t$.

We state the above result, which we believe to be new, as a corollary to the above proposition:
\begin{corollary}[A time-nonhomogeneous BDI process is negative binomial distributed when $I_0=0$ and $r(t)=r$]\label{coro:nonhomo-BDI-NBD}

If the function $r(t)=\frac{\nu(t)}{\lambda(t)}$ is a constant $r$ for all $t$, then the BDI process with $I_0=0$ is a NBD (negative binomial distributed) process with parameters $(r, \beta(t))$.
\begin{align}
G_{BDI:0} (z,t)=\left(\frac{1-\beta(t)}{1-\beta(t) z}\right)^r,~~~\mbox{when}~~r(t)=r. \label{NBD-of-BDI}
\end{align}
\end{corollary}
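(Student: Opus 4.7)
The plan is to derive the corollary as a direct specialization of Proposition \ref{prop:nonhomo-BDI} together with the decomposition (\ref{G_ID-our-result}). First I would invoke the product form (\ref{G_BDI}),
\[
G_{BDI:0}(z,t)=G_{BD:0}(z,t)\,G_{ID:0}(z,t),
\]
and observe that the hypothesis $I_0=0$ collapses the birth-death factor, since $G_{BD:I_0}(z,t)$ in (\ref{G_BD}) is raised to the power $I_0$; with $I_0=0$ this factor equals $1$ for every $z$ and $t$. Hence it suffices to evaluate $G_{ID:0}(z,t)$ under the assumption $r(t)\equiv r$.

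Next I would apply the decomposition (\ref{G_ID-our-result}), which expresses $G_{ID:0}(z,t)$ as the product of the negative-binomial factor $G_{NB(r(0),\beta(t))}(z,t)$ and the correction factor $G_C(z,t)$ defined in (\ref{C(z,t)}). By hypothesis, $r(u)\equiv r$ is constant on $[0,t]$, so $r'(u)\equiv 0$, and the integral in (\ref{C(z,t)}) vanishes; therefore $G_C(z,t)=\exp(0)=1$ identically in $z$ and $t$. Moreover $r(0)=r$, so the remaining factor is precisely
\[
\left(\frac{1-\beta(t)}{1-\beta(t)z}\right)^{r},
\]
which is the stated PGF.

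There is no substantive obstacle: the corollary is essentially a bookkeeping consequence of the integration by parts that produced (\ref{G_I-near-final}). The only thing worth flagging in the write-up is the justification that $G_C(z,t)=1$ under $r'\equiv 0$ and the observation that, in the constant-$r$ regime, the boundary term $[\log A(u)^{r(u)}]_{u=0}^{t}$ in (\ref{G_I-near-final}) already contains all of the dependence on $z$, through $A(0)=1-e^{s(t)}L(t)(z-1)$, since $A(t)=1$. Thus the entire PGF of the BDI process with no initial population and constant immigration-to-birth ratio is negative binomial with shape $r$ and success-probability parameter $\beta(t)$, as claimed.
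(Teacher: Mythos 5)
Your proof is correct and follows essentially the same route as the paper's, which likewise observes that $r(t)\equiv r$ forces $r'(u)=0$, so the correction factor $G_C(z,t)$ in (\ref{C(z,t)}) equals $1$ and $G_{BDI:0}(z,t)=G_{NB(r,\beta(t))}(z,t)$. Your extra remarks --- that $I_0=0$ trivializes the birth-death factor in (\ref{G_BDI}) and that $r(0)=r$ identifies the shape parameter --- merely make explicit what the paper leaves implicit.
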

\begin{proof}
If $r(t)=r$, then $r'(u)=0$ and thus, $C(t)=1$. Therefore,
\begin{align}
G_{BDI:0} (z,t)=G_{NB(r,\beta(t))}(z,t).  \label{NBD-and-L(t)}
\end{align}
\end{proof}
Note  (\ref{NBD-of-BDI}) is a generalization of (44) of \cite{kobayashi:2020a-arXiv} that holds for the time-homogeneous case. The corresponding PMF (probability mass function) takes the same form as (47), ibid.
\begin{align}
P^{(BDI:0)}_k(t)={k+r-1 \choose k}(1-\beta(t))^r \beta(t)^k, ~~~k= 0,1,2, \ldots.\label{PMF-nonhomo}
\end{align}

The assumption $r(t)$ being a constant does not imply that $\nu(t)$ and $\lambda(t)$ must be constants. In fact, it makes a perfect sense to consider the case where $\nu(t)= r \lambda(t)$: When a government asks its citizens to reduce their social activities to curtail the pandemic, the government should also tighten its security at its borders such as airports, seaports, etc.  In our simulation, we will consider the case in which $\nu(t)$ is changed in concert with $\lambda(t)$.  We fully evaluate, in \cite{kobayashi:2021bb}, the correction factor $C(z,t)$ when $\nu$ deviates from $r\lambda(t)$.

\subsection{Completing the Bartlett-Bailey Approach}
\label{subsec:Bartlett-Bailey}
In this section we take up ``Section 9.4: The effect of immigration" of Bailey \cite{bailey:1964}, pp. 115-116, and extend his heuristic approach and arrive at our closed form solution obtained in the preceding section.  It is not clear whether this heuristic approach is Bailey's own work.  In Bartlett's 3rd edition (1978), (see \cite{bartlett:1978},``Section 3-41: The effect of immigration (pp. 82-83),") he also describes this approach.  Judging from that the 1st edition of Bartlett's book goes back to 1955, it is very likely that the materials in Bailey ibid. may have been originally conceived by Bartlett. They both considered how to incorporate the effect of immigrants based on the result obtained in 1948 by Kendall on the time-nonhomegenous BD (birth-and-death) process \cite{kendall:1948a}.

Let $N_{ID}(u,t)$ be the number of the descendants of an immigrant who arrived at $u$ and are alive at $t$.  The immigrant who is still alive should be included in $N_{ID}(u,t)$. Let the $G_{ID}(z, u,t)$ be the PGF of $N_{ID}(u,t)$:
\begin{align}
G_{ID}(z,u, t)\triangleq \Ex[z^{N_{ID}(u, t)}], ~~u\leq t.\label{G_I(z,t;u)}
\end{align}

Recall that the PGF of the time-nonhomogeneous BD process, with the initial population $I_0$, is given by (70) of \cite{kobayashi:2021a}:
\begin{align}
G_{BD:I_0}(z, t)=\left(1+\frac{1}{\frac{e^{-s(t)}}{z-1}-L(t)}\right)^{I_0}. \label{PGF:BD}
\end{align}
Suppose that an immigrant arrives at t=0.  Then its descendants' PGF $G_{ID}(z,0,t)$ can be found by setting $I_0=1$ in the above formula:
\begin{align}
G_{ID}(z,0,t)=G_{BD:1}(z,t)=1+\frac{1}{\frac{e^{-s(t)}}{z-1}-L(t)}. \label{G_I(z, 0, t)}
\end{align}
If an immigrant arrives at time $u$, we just shift the above from zero to $u$, obtaining
\begin{align}
G_{ID}(z,u, t)=1+\frac{1}{\frac{e^{-s(u,t)}}{z-1}-L(u, t)}.\label{G_I(z,u,t)}
\end{align}
where
\begin{align}
s(u, t)&\triangleq \int_u^t (\lambda(\tau)-\mu(\tau))\,d\tau \label{def-s(u,t)}\\
L(u, t)&\triangleq \int_u^t \lambda(\tau)e^{-s(u,\tau)}\,d\tau, \label{def-L(u,t)}
\end{align}

Consider an infinitesimal interval $[u, u+du)$ within the interval $[0, t)$.  The probability of having an immigrant in this interval, and that of having no immigrant are given by $\nu(u)\,du$ and $1-\nu(u)\,du$, respectively.  Thus, the PGF at time $t$ due to the contribution from the $N_{ID}(u, t)$ descendants is given as
\begin{align}
\nu(u)G_{ID}(z,u, t)\,du+(1-\nu(u)du)&=1+\nu(u)(G_{ID}(z,u,t)-1)\,dr,  \label{G-I-u}
\end{align}
where the term $1-\nu(u)du$ contributes to the $z^0$ terms of the PGF.  
Let us divide the interval $[0, t)$ into $\cup_i [u_i, u_i+du)$ such that $u_1=0, u_2=du, u_3=2du, \cdots, u_i=u_{i-1}+du=(i-1)du, \cdots$.  In other words, $[0, t)$ is segmented into $N(du)=t/du$ infinitesimal intervals, which are contiguous but disjoint. Clearly, as $du\to 0$, $N(du)\to\infty$. The events (i.e, arrival or non-arrival) in different intervals are statistically independent to each other.  Thus, the total number of the immigrant's descendants, denoted $N(t)$, can be written as\footnote{If there is no immigrant arriving in the interval $[u_i,u_i+du)$, $N_{ID}(u, t_i)=0$ for all $t$.}
\begin{align}
N(t)&=\sum_{i=1}^{N(dr)} N_{ID}(u_i, t).
\end{align}
Then the immigrants and their descendants' contributions to the PGF, denoted $G_{ID}(z,t)$, should be given in the limit $du\to 0$ as,
\begin{align}
G_{ID}(z, t)=\lim_{du\to 0}\prod_{i=1}^{N(du)}\left\{1+\nu(u_i)(G_{ID}(z,u_i, t)-1)\,du\right\}.
\end{align}
On taking the natural logarithm of the above, we have
\begin{align}
\log G_{ID}(z,t)&=\lim_{du\to 0}\sum_{i=1}^{N(du)}\log\left\{ 1+\left[\nu(u_i)(G_{ID}(z,u_i, t)-1)\right]\,du\right\}\nonumber\\
&=\lim_{du\to 0}\sum_{i=1}^{N(du)}\nu(u_i)(G_{ID}(z,u_i, t)-1)\,du\nonumber\\
&=\int_0^t \nu(u)(G_{ID}(z, u, t)-1)\,du,
\end{align}
which leads to
\begin{equation}
\fbox{
\begin{minipage}{8.5cm}
\[
G_{ID}(z,t)=\exp\left(\int_0^t \nu(u)(G_{ID}(z,u, t)-1)\,du\right).
\]
\end{minipage}
} \label{Batlett-Bailey}
\end{equation}
This is where Bailey ends (see ibid, p. 116 Eqn, (9.51)).  Bartlett does not give explicitly this expression, but he states steps to arrive at this equation.  

In order to demonstrate the equivalence of the solution form of $G_{ID}(z, t)$ given by (\ref{Batlett-Bailey}) to our result (\ref{G_ID-our-result}), substitute using (\ref{G_I(z,u,t)}) into the above, obtaining
\begin{align}
G_{ID}(z, t)=\exp\left(\int_0^t\frac{\nu(u)}{\frac{e^{-s(u,t)}}{z-1}-L(u, t)}\,du\right)   \label{G_I-2}
\end{align}
It is straightforward to see
\begin{align}
s(u,t)&=s(t)-s(u), 
\end{align}
but it is important to note that $L(u,t)\neq L(t)-L(u)$, but instead
\begin{align}
L(u,t)&=\int_u^t\lambda(\tau)e^{-s(u,\tau)}\,d\tau=e^{s(u)}(L(t)-L(u)),\label{L(u,t)}
\end{align}
Then (\ref{G_I-2}) can be written as
\begin{align}
G_{ID}(z, t) &=\exp\left(\int_0^t\frac{\nu(u)e^{s(t)-s(u)}(z-1)}{1-e^{s(t)}(L(t)-L(u))(z-1)}\,du\right) \label{G_I-3}
\end{align}
which is equivalent to (\ref{G_Immigration}).  Then the rest of the steps to arrive at the important result 
(\ref{G_ID-our-result}) should be straightforward.  We suspect that the pitfall (\ref{L(u,t)}) might have prevented Bailey and others from completing this insightful heuristic approach.

\subsection{The Mean and Coefficient of Variation of the BDI Process}

As for the internal infection rate $\lambda(t)$, we consider the same example as that in \cite{kobayashi:2021a}, Figures 1 \& 2 with $d=5$ (which corresponds to the curve plotted in red). The recovery rate  $\mu(t)$ i assumed to be constant as before, i.e., $\mu_0=0.1$. 
 
As Corollary \ref{coro:nonhomo-BDI-NBD} implies, if we choose the function 
$\nu(t)=r \lambda(t)$, where $r=\frac{\nu_0}{\lambda_0}=2/3$, then with the initial condition $I_0=0$, the BDI process $I_{BDI:0}(t)$ reduces to the immigration process $I_{I:0}(t)$, which is negative binomial distributed: NB$(r, \beta(t))$.  Thus, we have 
\begin{align}
\oI_{BDI:0}(t)= \frac{r\beta(t)}{1-\beta(t)}. \label{unstable-mean-BDI:0}
\end{align}
But the formula (\ref{unstable-mean-BDI:0}) is numerically unstable in the region where $\beta(t)$ is close to unity.  A better computational formula is given by (14) of \cite{kobayashi:2021a} with $I_0=0$, i.e.,
\begin{align}
\oI_{BDI:0}(t)= e^{s(t)}N(t),  \label{stable-mean-BDI:0}
\end{align}
where $N(t)$ is defined in (15), ibid:
\begin{align}
N(t)\triangleq \int_0^t\nu(t)e^{-s(u)}\,du.\label{def-N(t)}
\end{align}
The equivalence of the above to (\ref{unstable-mean-BDI:0}) can be shown from the definition of $\beta(t)$ and the aforementioned identity $L(t)=M(t)+1-s^{-s(t)}$.

\begin{figure}[thb]
\begin{minipage}[t]{0.45\textwidth}
\centering
\includegraphics[width=\textwidth]{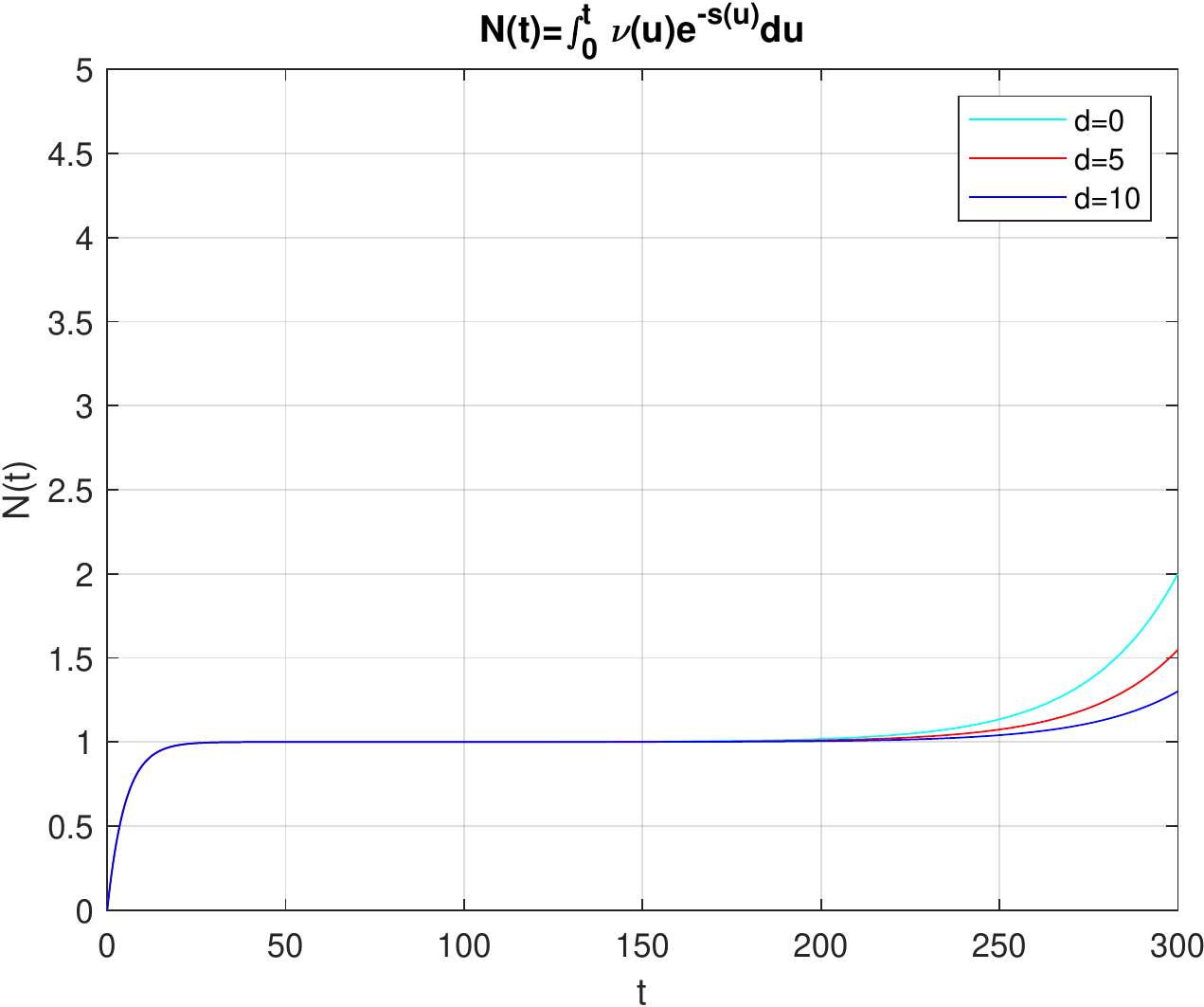}
\caption{\sf The function $N(t)$ when $\nu(t)=r\lambda(t)$.}
\label{fig:N(t)-BDI-r-lambda}
\end{minipage} 
\qquad
\begin{minipage}[t]{0.45\textwidth}
\centering
\includegraphics[width=\textwidth]{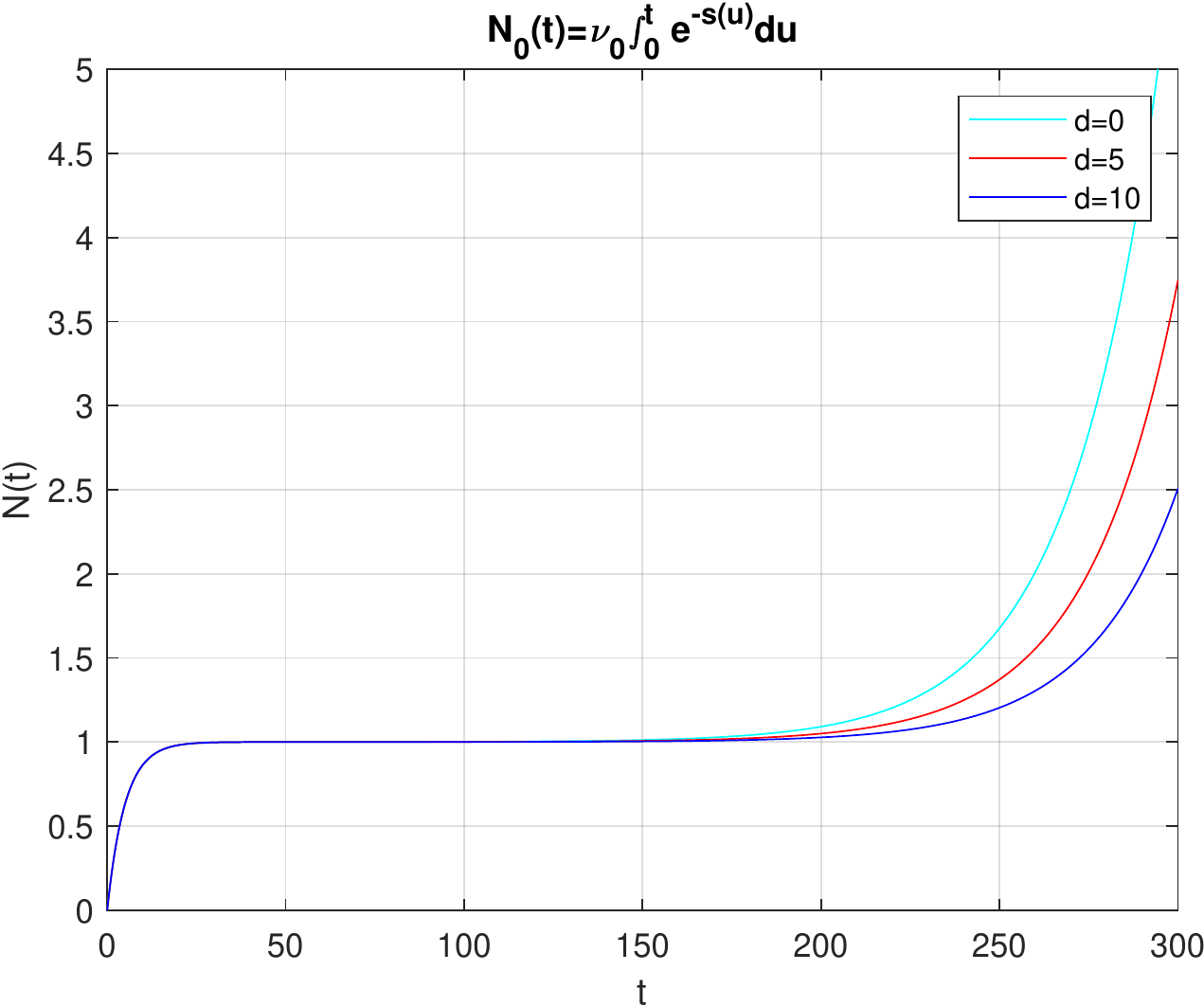}
\caption{\sf The function $N(t)$ when $\nu(t)=\nu_0$.} 
\label{fig:N(t)-BDI-nu_0}
\end{minipage}
\end{figure}
\begin{figure}[bht]
\begin{minipage}[b]{0.45\textwidth}
\centering
\includegraphics[width=\textwidth]{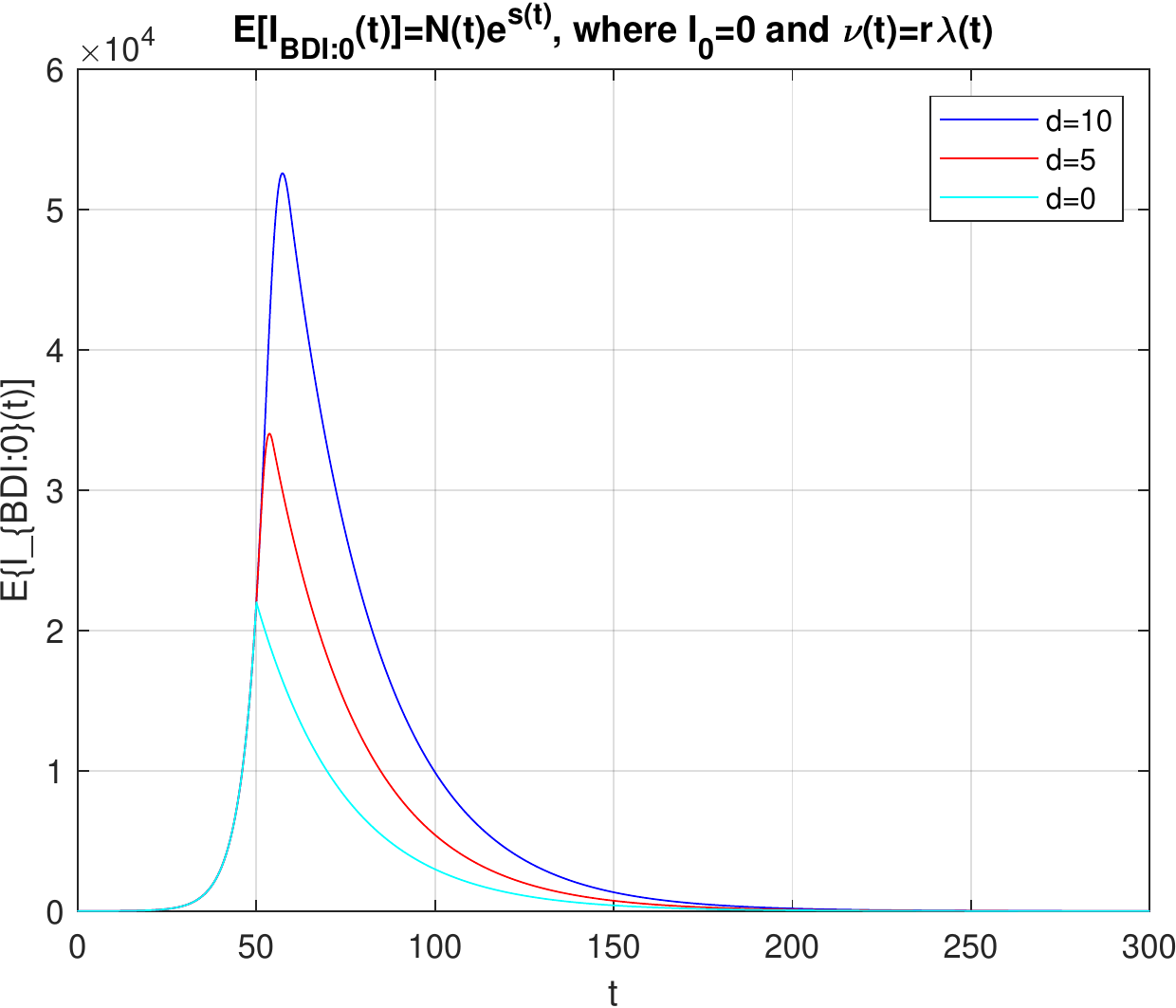}
\caption{\sf  $\oI_{BDI:0}(t)$ when $\nu(t)=r\lambda(t)$.}
\label{fig:EI_BDI-with-N(t)}
\end{minipage} 
\qquad
\begin{minipage}[b]{0.45\textwidth}
\centering
\includegraphics[width=\textwidth]{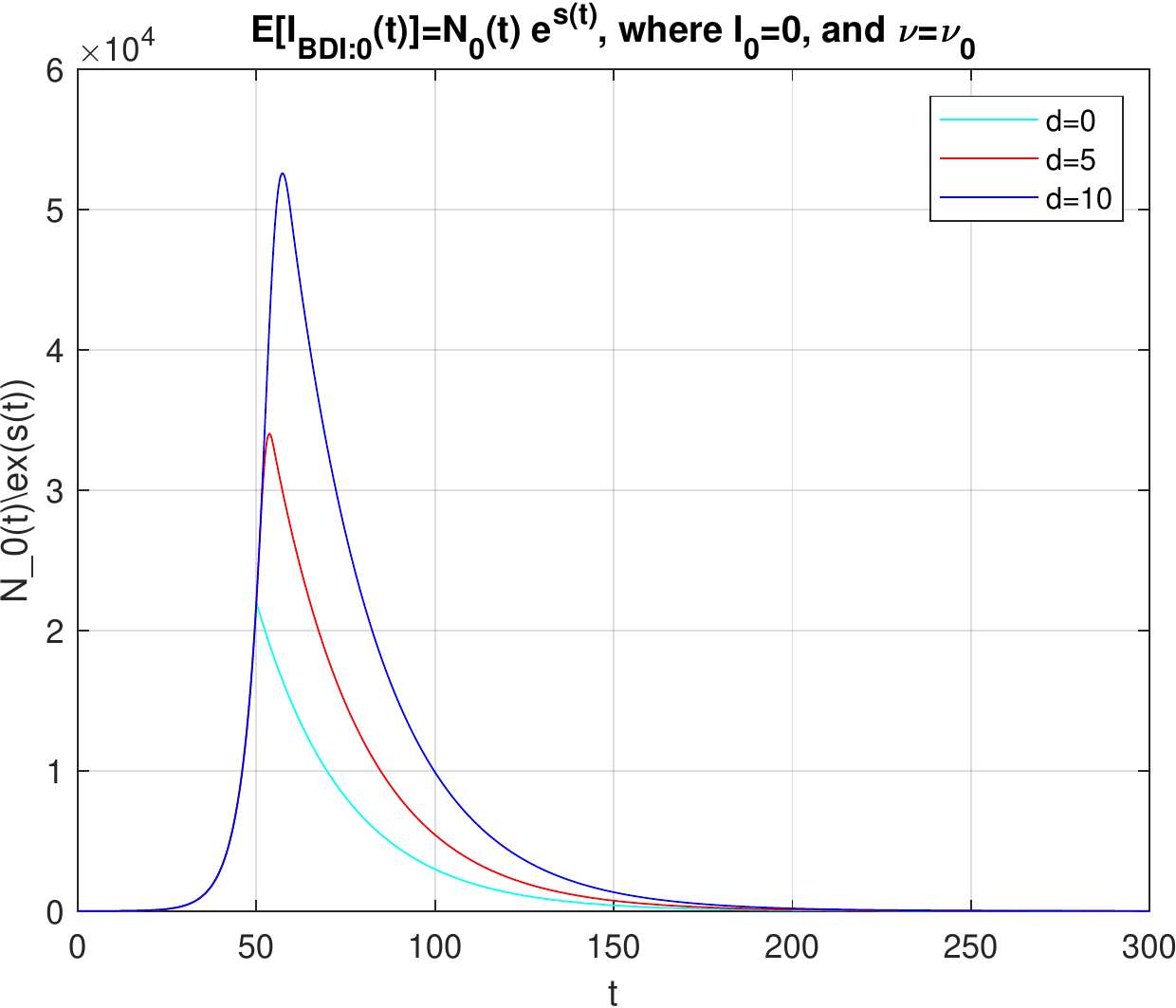}
\caption{\sf $\oI_{BDI:0}(t)$  when $\nu(t)=\nu_0$.} 
\label{fig:EI_BDI-nu_0}
\end{minipage}
\end{figure}
\begin{figure}[hbt]
\begin{minipage}[b]{0.45\textwidth}
\centering
\includegraphics[width=\textwidth]{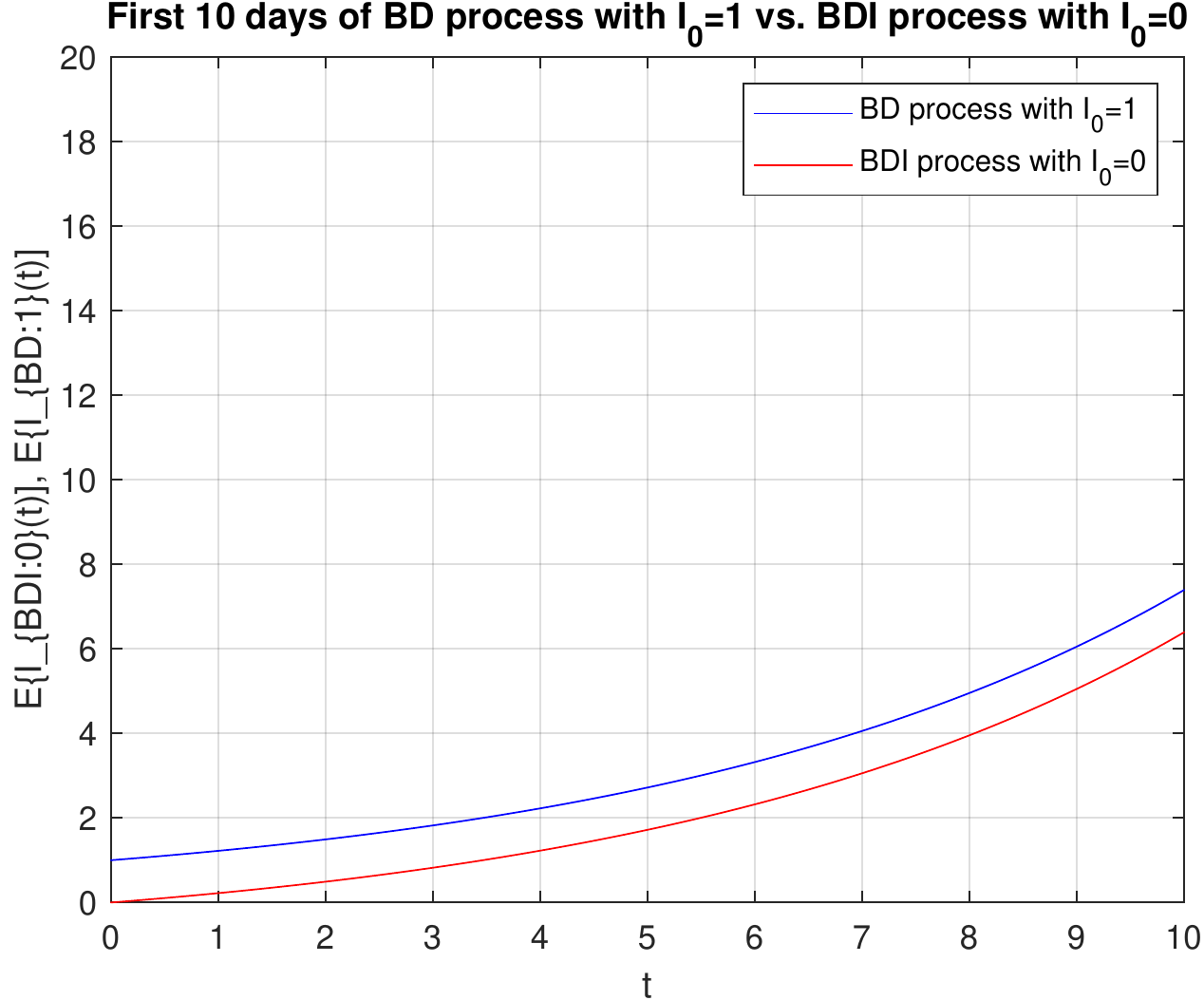}
\caption{\sf  $\oI_{BDI:0}(t)$ and $\oI_{BD:1}(t)$.}
\label{fig:BD-I_0=1-and-BDI-I_0=0}
\end{minipage}
\qquad
\begin{minipage}[b]{0.45\textwidth}
\centering
\includegraphics[width=\textwidth]{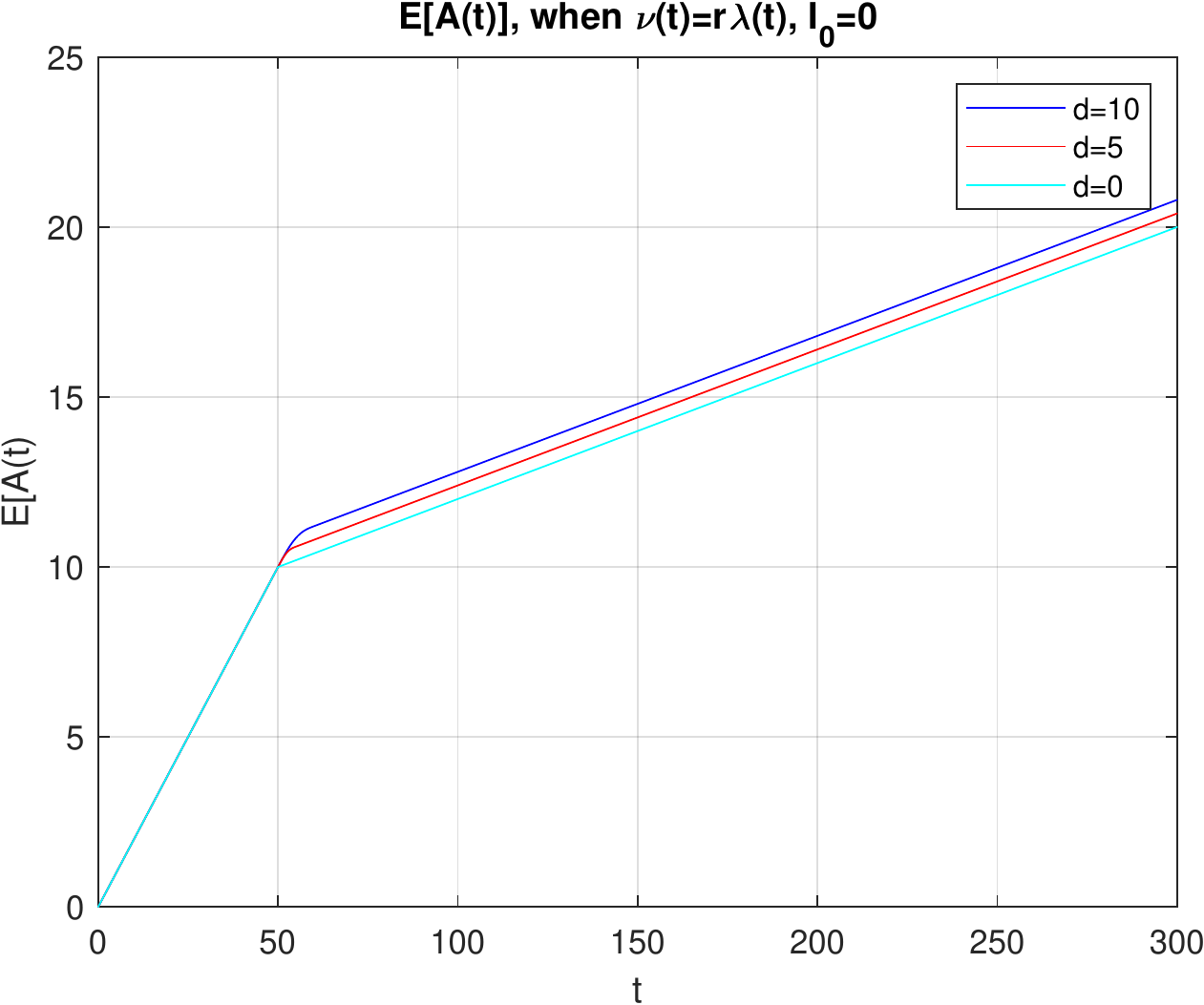}
\caption{\sf $\oA_{BDI:0}(t)$  when $\nu(t)=r\lambda(t)$.} 
\label{fig:EA-BDI-I_0=0}
\end{minipage}
\end{figure}
\begin{figure}[hbt]
\begin{minipage}[b]{0.45\textwidth}
\centering
\includegraphics[width=\textwidth]{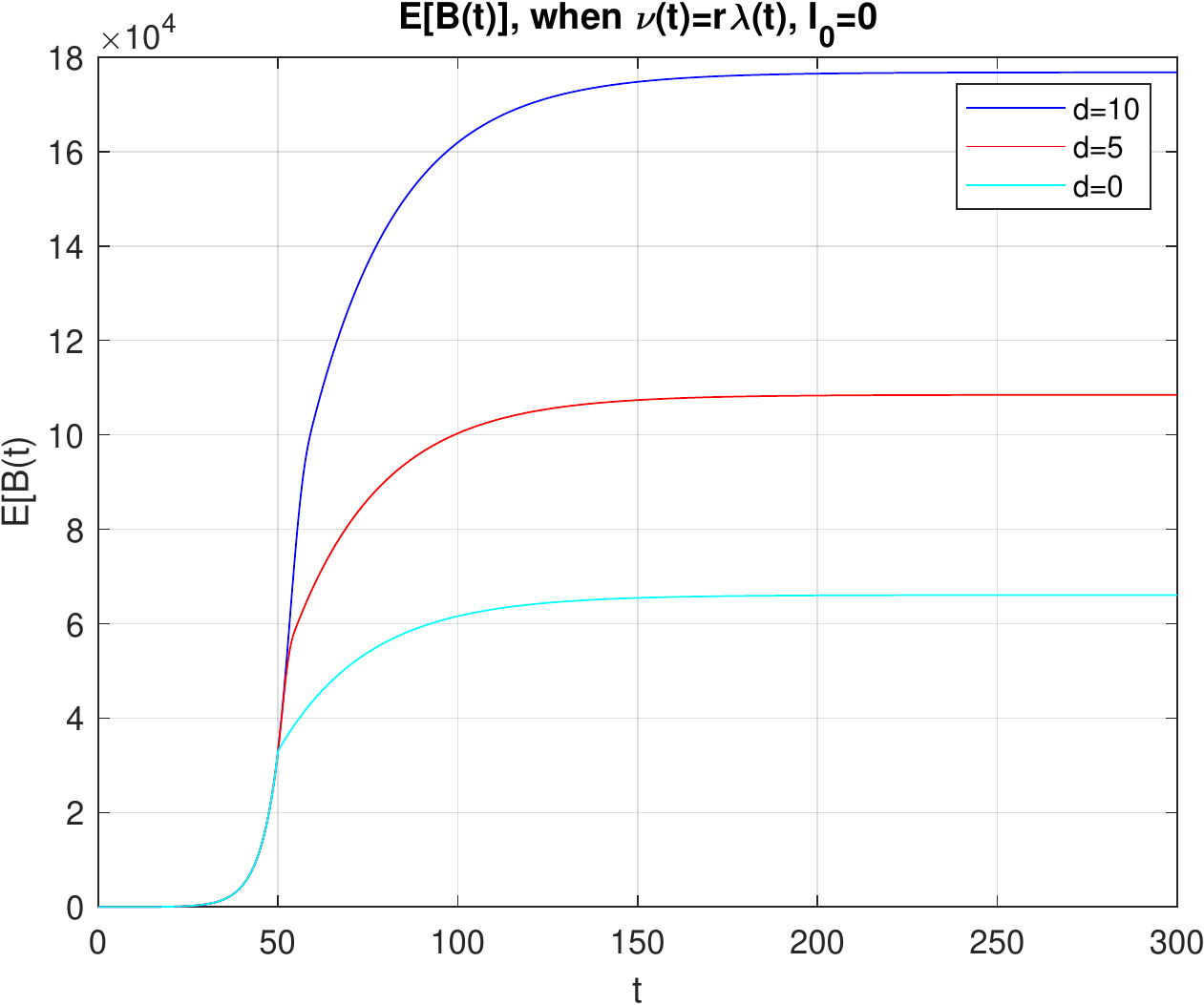}
\caption{\sf $\oB_{BDI:0}(t)$  when $\nu(t)=r\lambda(t)$.}
\label{fig:EB-BDI-I_0=0}
\end{minipage}
\qquad
\begin{minipage}[b]{0.45\textwidth}
\centering
\includegraphics[width=\textwidth]{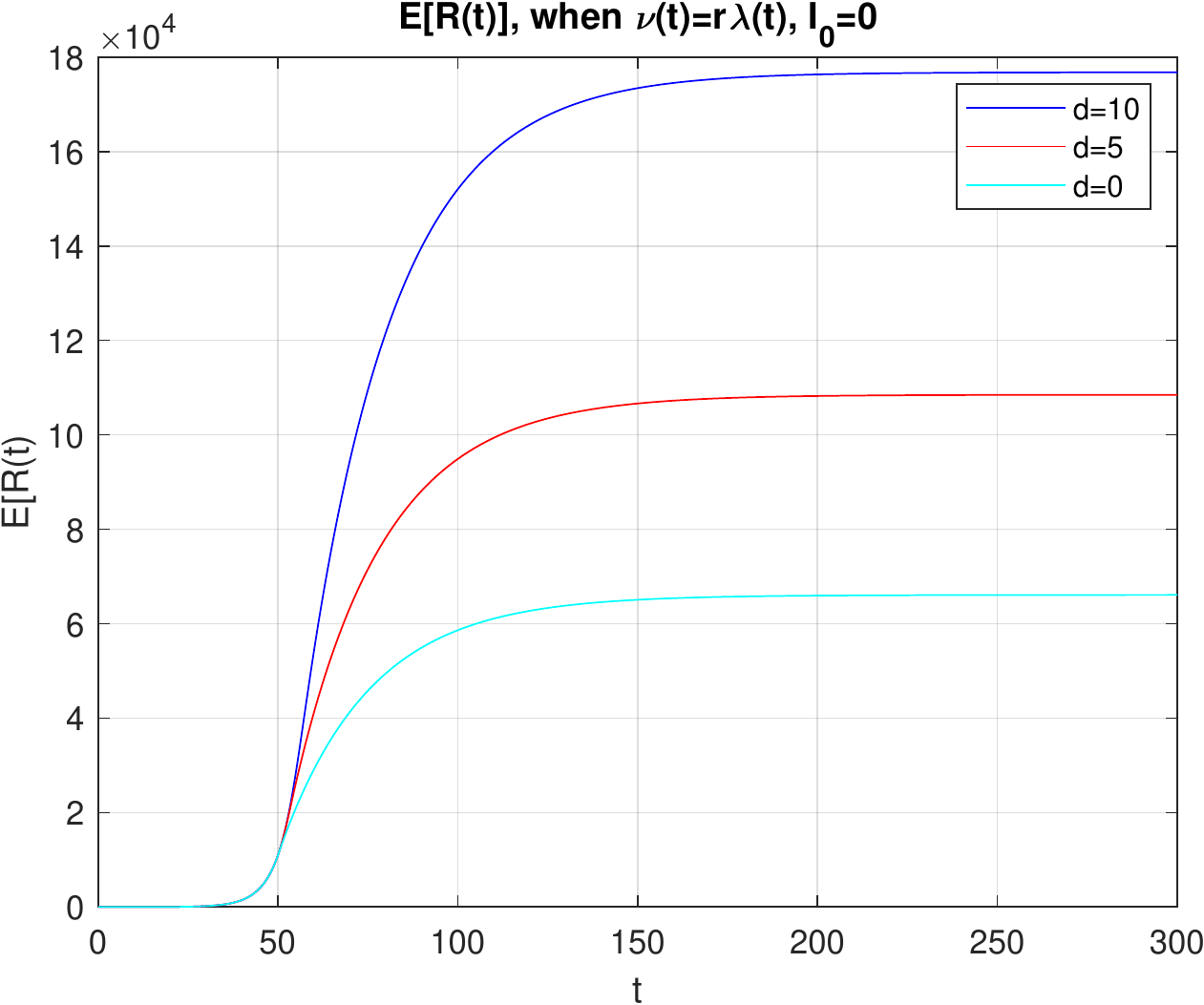}
\caption{\sf $\oR_{BDI:0}(t)$  when $\nu(t)=r\lambda(t)$.} 
\label{fig:ER-BDI-I_0=0}
\end{minipage}
\end{figure}
\begin{figure}[hbt]
\begin{minipage}[b]{0.45\textwidth}
\centering
\includegraphics[width=\textwidth]{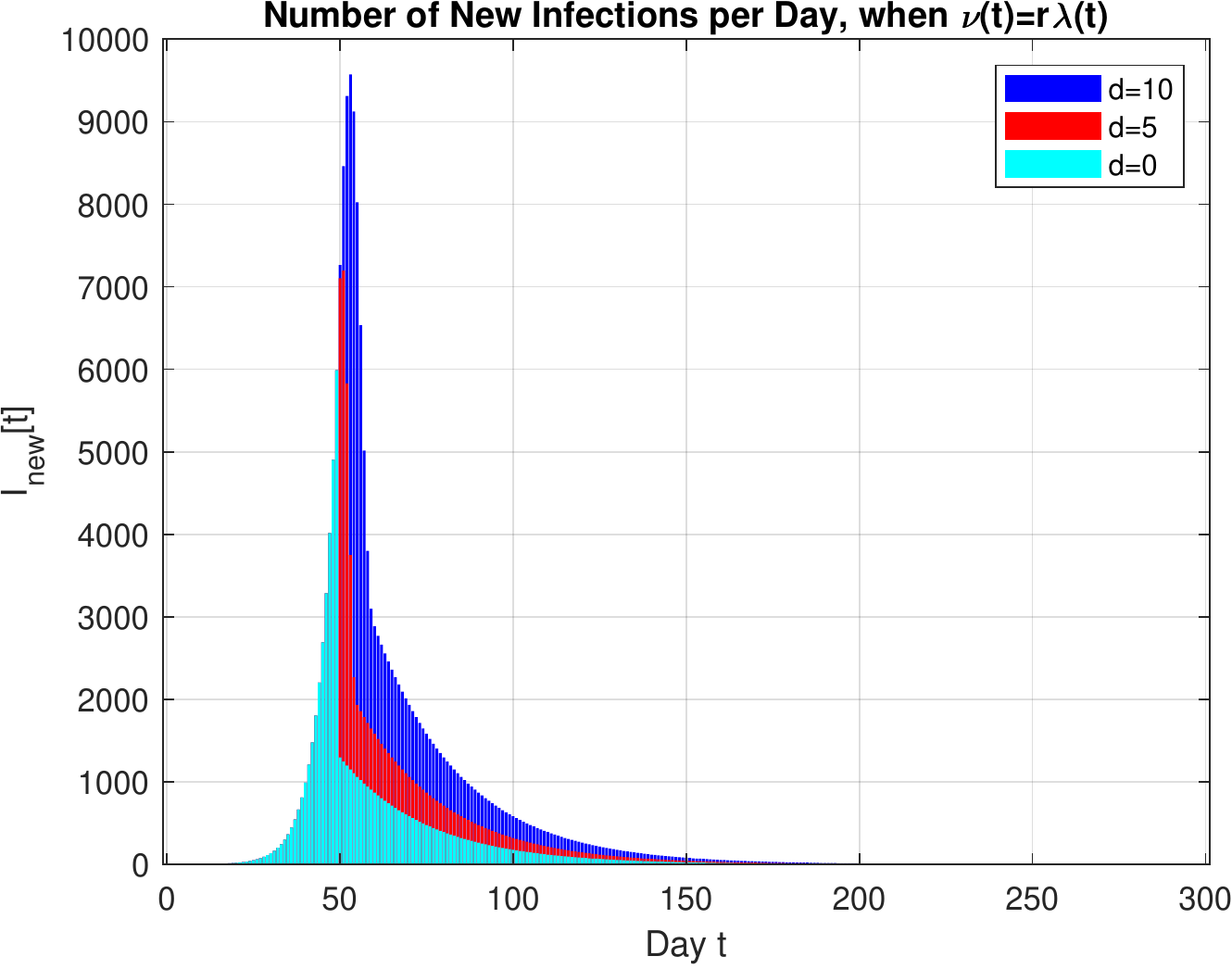}
\caption{\sf New Daily Infections $\oI_{new}[t], t-0, 1, 2, \ldots$  when $\nu(t)=r\lambda(t)$.}
\label{fig:New-daily-infections}
\end{minipage}
\qquad
\begin{minipage}[b]{0.45\textwidth}
\centering
\includegraphics[width=\textwidth]{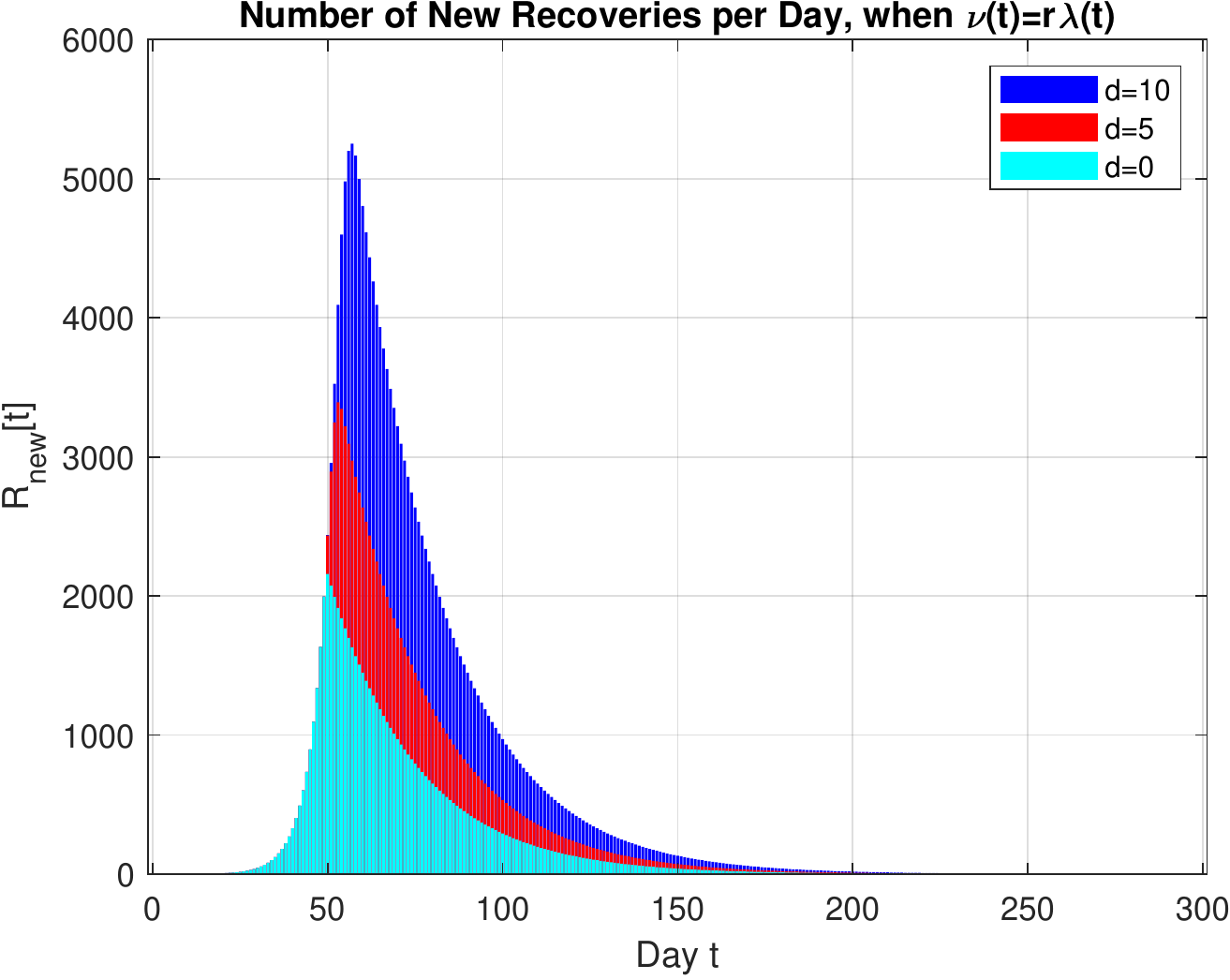}
\caption{\sf New Daily Recoveries $\oR_{new}[t], t-0, 1, 2, \ldots$  when $\nu(t)=r\lambda(t)$.} 
\label{fig:New-daily-recoveries}
\end{minipage}
\end{figure}
\begin{figure}[hbt]
\begin{minipage}[b]{0.45\textwidth}
\centering
\includegraphics[width=\textwidth]{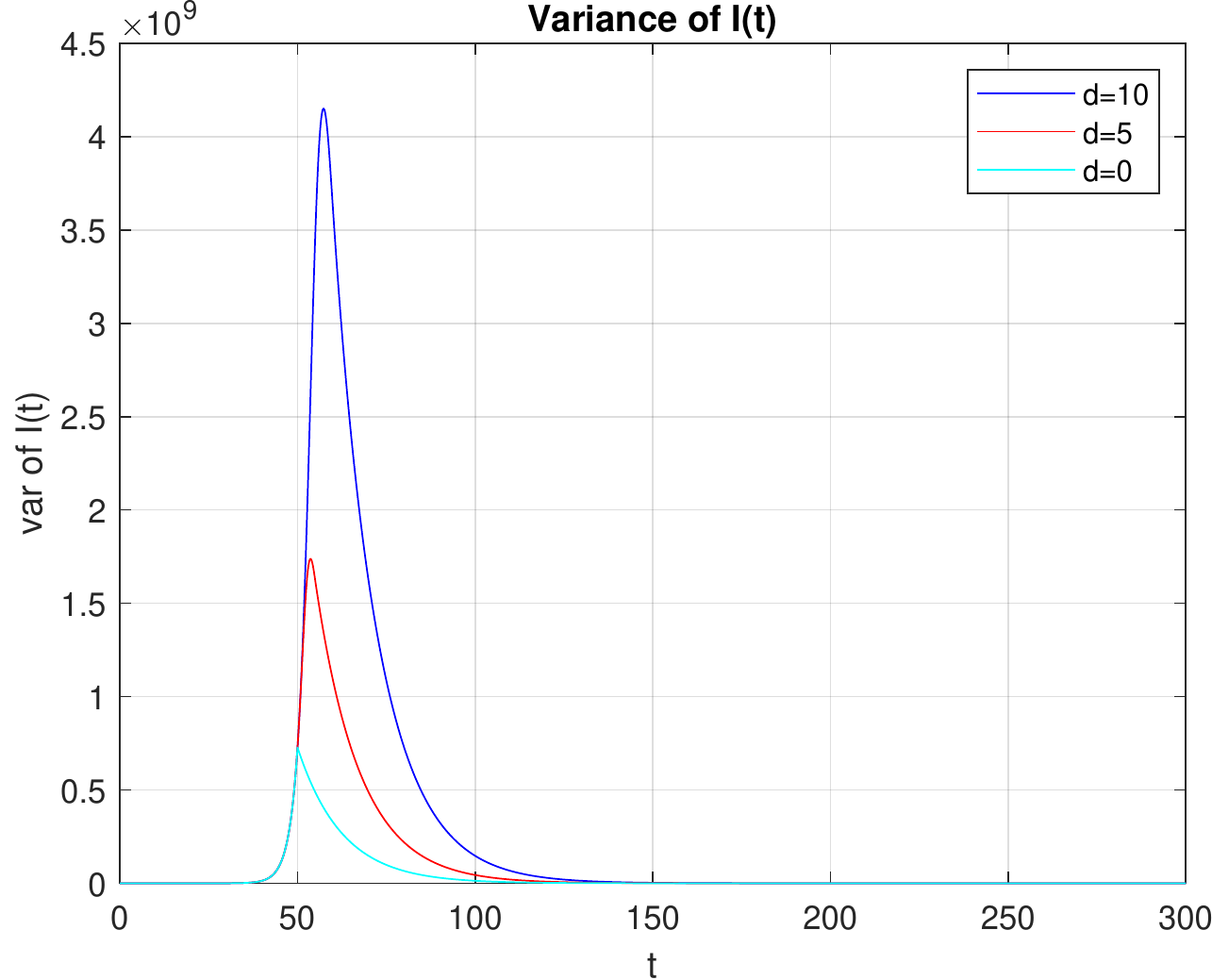}
\caption{\sf Variance $\sigma^2_{BDI:0}(t)$, when $\nu(t)=r\lambda(t)$.}
\label{fig:variance-BDI-I_0=0}
\end{minipage}
\qquad
\begin{minipage}[b]{0.45\textwidth}
\centering
\includegraphics[width=\textwidth]{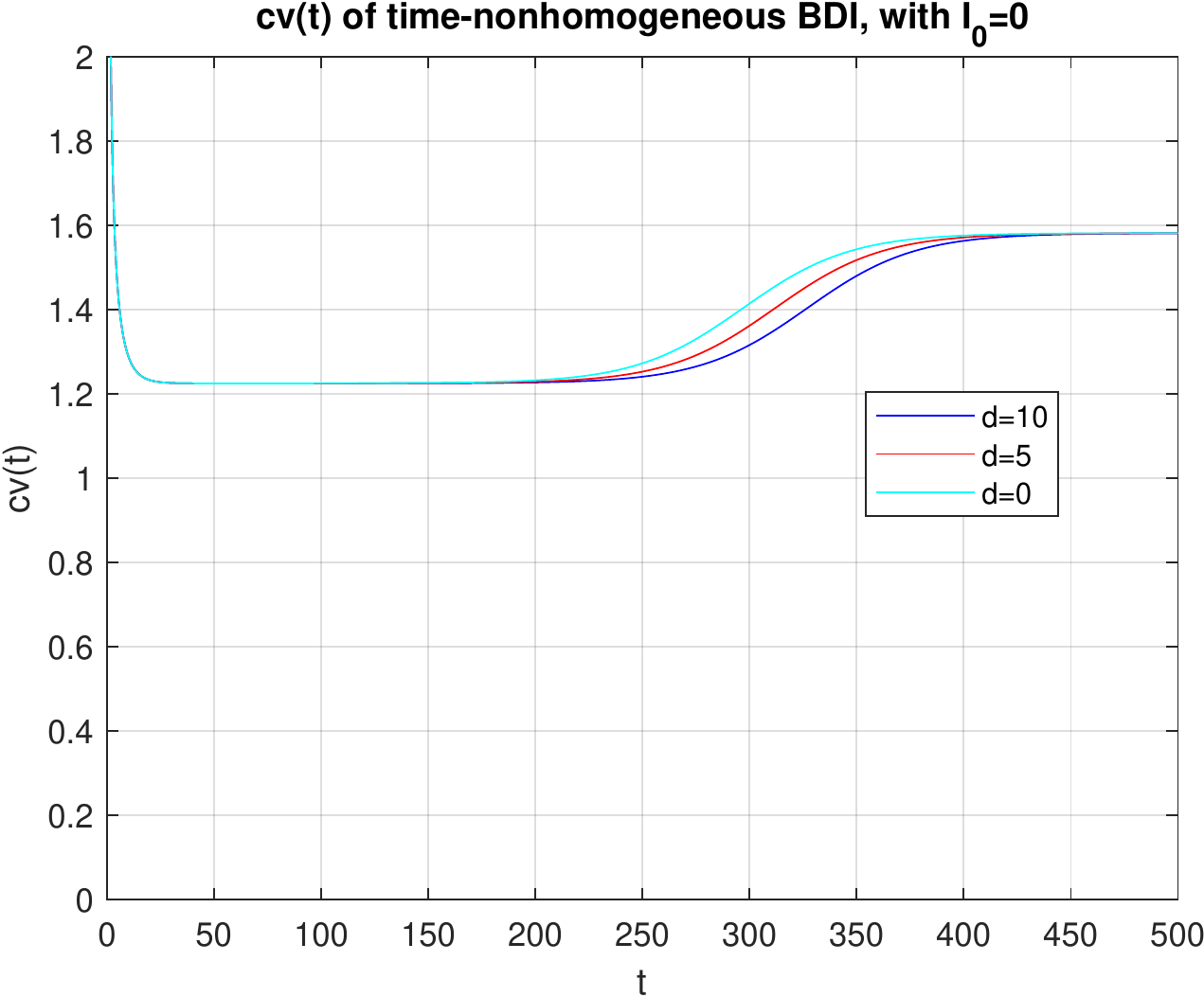}
\caption{\sf Coefficient of variation $cv_{BDI:0}(t)$,  when $\nu(t)=r\lambda(t)$.} 
\label{fig:cv-BDI-I_0=0}
\end{minipage}
\end{figure}
Figure \ref{fig:N(t)-BDI-r-lambda} shows the function $N(t)$ when $\nu(t)=r\lambda(t)$,  whereas Figure \ref{fig:N(t)-BDI-nu_0} shows $N(t)$ when $\nu(t)=\nu_0$, i.e., $N(t)=\nu_0\Sigma(t)$, where $\Sigma(t)$ was defined in (18) of \cite{kobayashi:2021a} and plotted in Figure 5, ibid. Note that these two curves are hardly distinguishable up to time $t\approx 175$. The resultant curves of $\oI_{BDI:0}(t)$, shown in Figures \ref{fig:EI_BDI-with-N(t)} and  \ref{fig:EI_BDI-nu_0}, respectively, are hardly indistinguishable for the entire $t$, beyond  $t\approx 175$.  This insensitivity of $I_{BDI:0}(t)$ to $\nu(t)$ was explained in the last paragraph of page 10 of \cite{kobayashi:2021a}. These curves are also hardly distinguishable from that of $\oI_{BD:1}(t)$, i.e., the BD process with the initial value $I_0=1$, shown in Figure 4 of \cite{kobayashi:2021a}:
\begin{align}
\oI_{BD:1}(t)=e^{s(t)}.  \label{eq:BD-with-I_0=1}
\end{align}
Figure \ref{fig:BD-I_0=1-and-BDI-I_0=0} given below shows the first ten days of the curves (\ref{eq:BD-with-I_0=1}) and (\ref{stable-mean-BDI:0}).  By noting that the function $s(t)=(\lambda_0-\mu_0)t\triangleq a_0 t$, and $\nu(t)=\nu_0$ for $0\leq t\le t_1=50$ in our example, we can compute
\begin{align}
N(t)=\nu_0\int_0^t e^{-a_0 u}\,du=\frac{\nu_0}{a_0}\left(1-e^{-a_0t}\right).
\end{align}
In this example, $\frac{\nu_0}{a_0}=1.0$. Thus, the difference between the two curves  in Figure  $t\approx 175$ is a shift by unity at $t=0$: the BD process starts from 1, whereas the BDI process starts from 0.  

Figures \ref{fig:EA-BDI-I_0=0}, \ref{fig:EB-BDI-I_0=0} and \ref{fig:ER-BDI-I_0=0} are the expected cumulative counts of the external arrivals $\oA_{BDI:0}(t)$, internal infections $\oB_{BDI:0}(t)$, and recovery/removal/death $\oB_{BDI:0}(t)$, respectively.  The last two curves are hardly distinguishable from Figures 9 and 10 of \cite{kobayashi:2021a}, as expected.

Figures \ref{fig:New-daily-infections} and \ref{fig:New-daily-recoveries} are bar plots of the new daily infections and recoveries defined in (41) and (40) of \cite{kobayashi:2021a}.  Again they are virtually the same as Figures 11 and 12, ibid. Although the terms $\oA(t)-\oA(t-1)$ in (41) is non-zero for the BDI process, this contribution is negligibly small compared with the terms $\oB(t)-\oB(t-1)$.

The variance of $I_{BDI:0}(t)$ can be expressed, using the formula of the negative binomial distribution (see e.g., (53) of \cite{kobayashi:2020a-arXiv}) as
\begin{align}
\sigma^2_{BDI:0}(t)=\frac{r\beta(t)}{(1-\beta(t))^2},
\end{align}
which is computationally unstable when $\beta(t)\approx 1$.  A better computational formula is 
\begin{align}
\sigma^2_{BDI:0}(t)=rL(t)(1+M(t))e^{2s(t)}, \label{variance-BDI}
\end{align}
from which we can compute the coefficient of variation as
\begin{align}
c_{BDI:0}(t)=\frac{\sqrt{rL(t)(1+M(t))}}{N(t)}. \label{cv-BDI}
\end{align}
In Figures \ref{fig:variance-BDI-I_0=0} and \ref{fig:cv-BDI-I_0=0} we plot the variance and the c.v. (coefficient of variation) of the process $I_{BDI:0}(t)$.  Figure \ref{fig:cv-BDI-I_0=0} should be compared with the c.v. of the BD process with $I_0=1$, which is $\sqrt{L(t)+M(t)}$ as given in (90), and is plotted in Figure 15 of \cite{kobayashi:2021a}.  

In the case of the BD process which starts with $I_0=1$, its c.v. is zero at $t=0$, whereas in the BDI process with $I_0=0$ both $\sigma_{BDI:0}(t)$ and $\oI_{BDI:0}(t)$ start from zero at $t=0$, but their ratio at $t=0$ is $\infty$.  This is because we can approximate $L(t)$ and $N(t)$ as follows at $t\approx 0$ 
\begin{align}
L(t)&\approx\frac{\lambda_0}{a_0}a_0 t=\lambda_0 t,~~~\mbox{and}~~~
N(t)\approx\nu_0 t,~~~\mbox{at}~~~t\approx 0.
\end{align}
Thus,
\begin{align}
c_{BDI:0}(t)=\frac{\sqrt{r\lambda_0 t}}{\nu_0 t}
=\frac{1}{\sqrt{\nu_0 t}} \to\infty,~~~\mbox{as}~~t\to 0. \label{cv-BDI-near-t=0}
\end{align}

The flat values of the above c.v. are
\begin{align}
c_{BDI:0}(t)&\approx \frac{\sqrt{r\lambda_0(a_0+\mu_0)}}{\nu_0}=\frac{1}{\sqrt{r}};  \\
c_{BD:1}(t)&\approx \sqrt{\frac{(\lambda_0+\mu_0)}{a_0}}.
\end{align}
For $r=2/3$, we find $c_{BDI:0}(t)\approx \sqrt{1.5}=1.2247$ and 
$c_{BD:1}(t)=\sqrt{\frac{0.3+0.1}{0.3-0.1}}=\sqrt{2}=1.414$. 

We should note that the coefficient variation of the BD process with the initial value $I_0$ is, as evident from the PGF (\ref{PGF:BD}).
\begin{align}
c_{BD:I_0}(t)&=\frac{c_{BD:1}(t)}{\sqrt{I_0}}\approx \sqrt{\frac{(\lambda_0+\mu_0)}{a_0I_0}}.
\end{align}

\clearpage

\subsection{Probability Distributions of the BDI Process}

The PMF of the BDI process is given by (\ref{PMF-nonhomo}), i.e.,
\begin{align}
P^{(BDI:0)}_k(t)={k+r-1\choose k}(1-\beta(t))^r\beta(t)^k,~~~k=0,1, 2, \ldots.\label{PMF-BDI}
\end{align}

\begin{figure}[hbt]
\begin{minipage}[b]{0.45\textwidth}
\centering
\includegraphics[width=\textwidth]{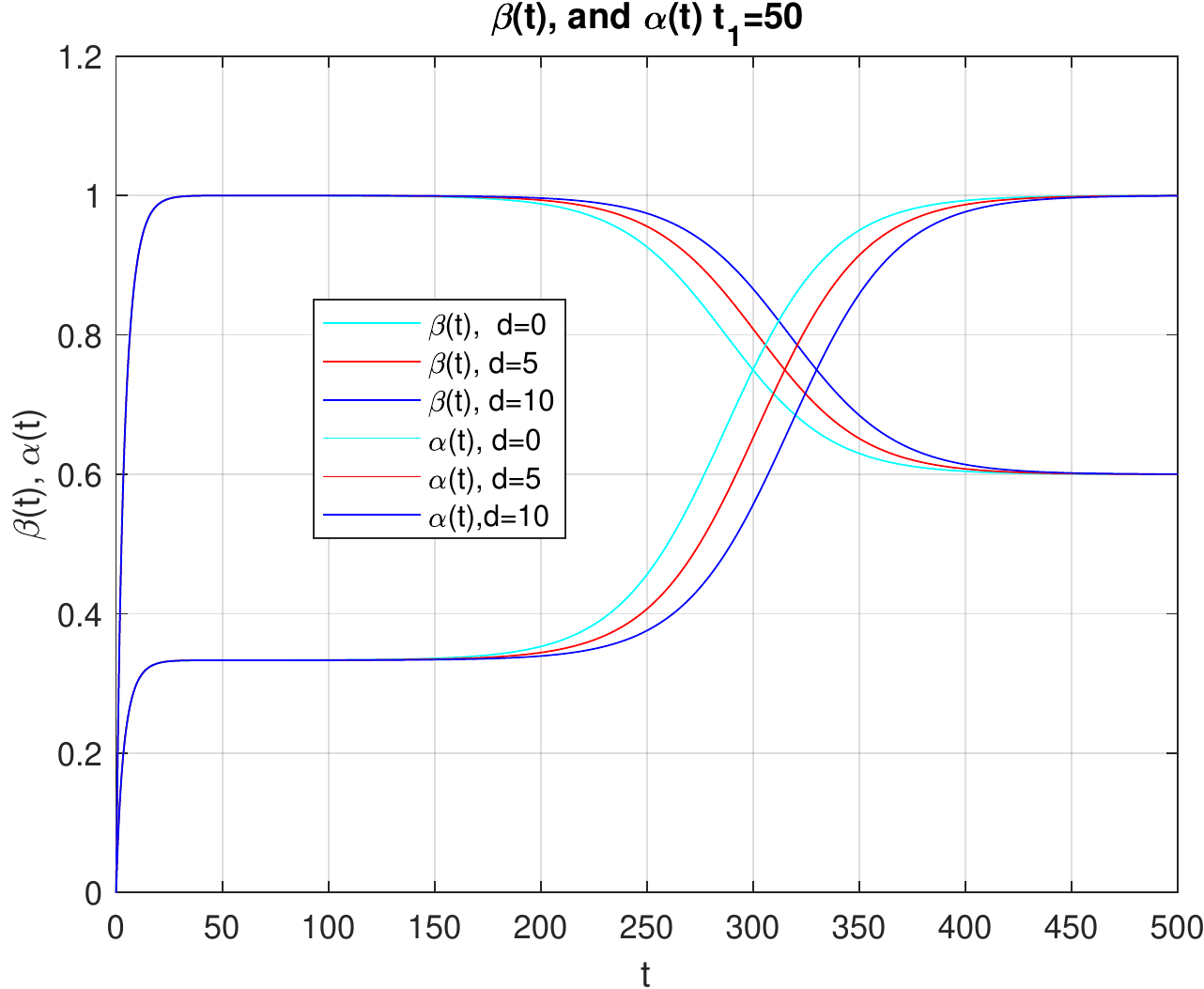}
\caption{\sf $\alpha(t)$ and $\beta(t)$.}
\label{fig:alpha(t)-beta(t)}
\end{minipage}
\qquad
\begin{minipage}[b]{0.45\textwidth}
\centering
\includegraphics[width=\textwidth]{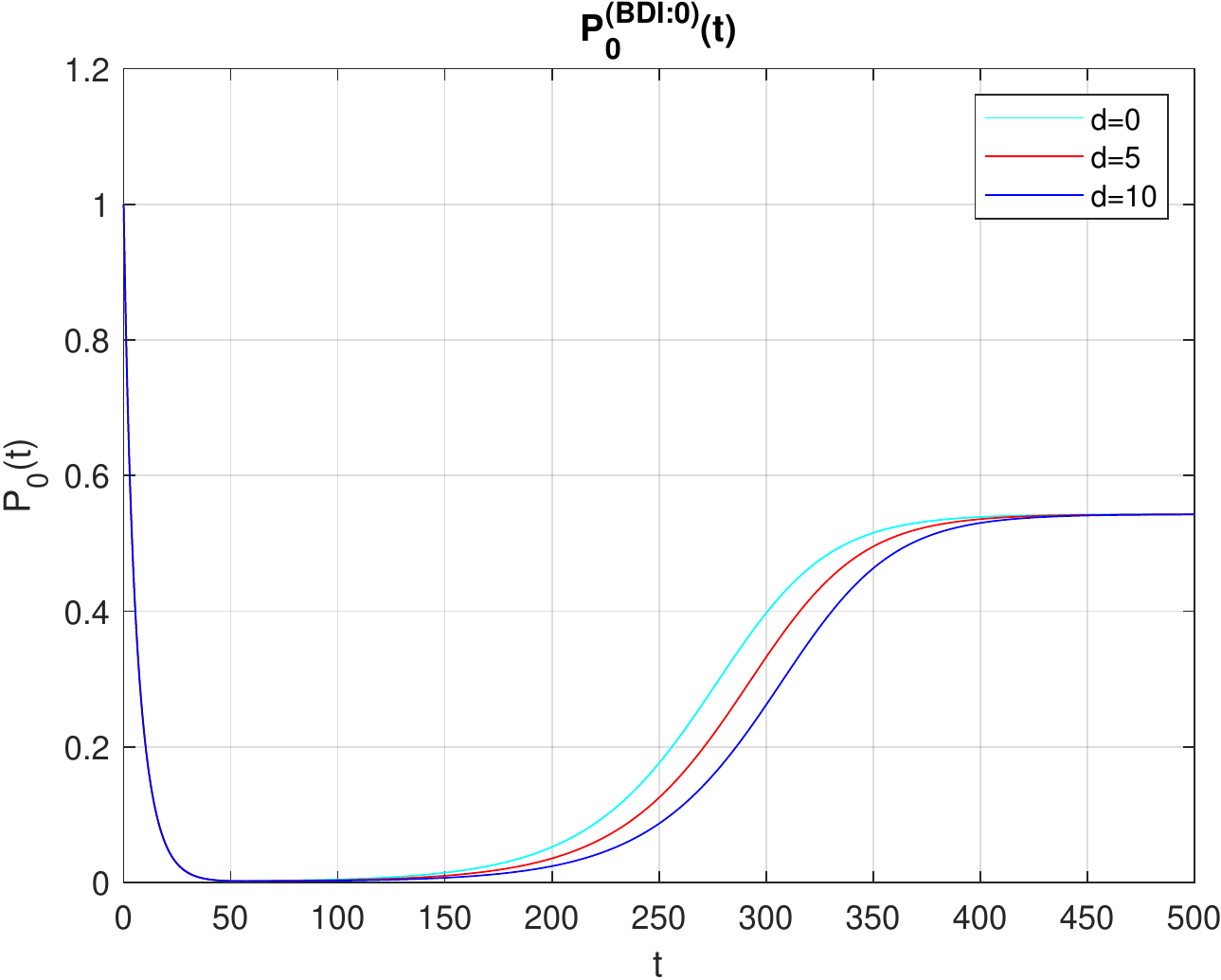}
\caption{\sf $P_0(t)$ for the BDI process with $I_0=0$.} 
\label{fig:P_0(t)-BDI-I_0}
\end{minipage}
\end{figure}

For the convenience of the readers we reproduce in Figure \ref{fig:alpha(t)-beta(t)} of the plot of $\alpha(t)$ (the one that is a monotone increasing) and $\beta(t)$ which were given in  Figure 14 of \cite{kobayashi:2021a}.  The functions $\alpha(t)$ and $\beta(t)$ are intimately related to each other. From (81) ibid, for instance, we know
\begin{align}
\frac{1-\alpha(t)}{1-\beta(t)}=e^{s(t)},  \label{alpha-beta-relation}
\end{align}
Thus, we have two different expressions for $P_0^{(BDI:0)}(t)$:
\begin{align}
P_0^{(BDI:0)}(t)=(1-\beta(t))^r=(1-\alpha(t))^re^{-rs(t)}.\label{P_0(t)-BDI}
\end{align}

By comparing the PMFs of the BDI process at various $t$ with those of the BD process given in Figure 16-27 of \cite{kobayashi:2021a}\footnote{We only considered the case $d=0$ in the PMF plots of the BD process.}, we will notice the following differences, although the mean values of the two processes, $\oI_{BD:1}(t)$ and $\oI_{BDI:0}(t)$, are hardly distinguishable, as noted earlier:
\begin{enumerate}
\item The BD process process begins at $I_0(\geq 1)$ (in the example we computed we assumed $I_0=1$), whereas the BDI starts from $I_0=0$.
\item The maximum probability the BD process remains at initial value $k=1 (=I_0)$ until $t\approx 4$, but the peak moves to $k=0$ by $t=8$, and its value remains around at $\frac{\mu_0}{\lambda_0}=1/3$ until $t\approx 150$.  Between $t=150$ to $350$, the probability at $k=0$ gradually increases towards unity. As $t$ further increases, it will converges to one, that is the process $I(t)$ converges to zero with probability one. Refer to Appendix A for a more detailed discussion.
\item The PMFs of the BDI process at $0\leq t\leq t_1(=50)$, i.e., Figures 15-20 are the same as those we showed in Part I \cite{kobayashi:2020a-arXiv}, Figures 9-13. As discussed in Section 5.1 and shown in Figure 6 of Part I \cite{kobayashi:2020a-arXiv}, the negative binomial distribution, NB($r,\beta$) with $r<1$, is monotone decreasing distribution, and has a fat tail when $\beta$ is close to unity, because $P_k(t)\propto \beta(t)^k$.

\item The BDI with $I_0=0$ begins, by definition has its peak value of probability distribution at $k=0$.  As (\ref{P_0(t)-BDI}) suggests, $P_0(t)$ quickly comes down to zero and stays there until $t\approx 150$ and gradually increases towards the equilibrium in the $a=\lambda_0-\mu_0=0.06-0.2<0$ regime.  As we see from (84) of \cite{kobayashi:2021a}
\begin{align}
\lim_{t\to\infty}\beta(t)=\frac{\lambda_1}{\mu_0}=0.06/0.1=0.6.
\end{align} 
The $P_0(t)\to (1-\beta(t))^r\to (1-0.6)^{2/3}=0.5429$.  In general 
\begin{align}
\lim_{t\to\infty} P_k(t)={k+r-1\choose k}\left(1-{\cal R}_\infty\right)^r {{\cal R}_\infty}^k,
\end{align}
where
\begin{align}
{\cal R}_\infty=\lim_{t\to\infty}\frac{\lambda(t)}{\mu(t)}<1,
\end{align}
which is the effective reproduction number in the equilibrium state.
\item Although the transition of the internal infection rate $\lambda(t)$ and external arrival rate $\nu(t)$ take place at $t_1=50$, it effects to the the transition of the probability distribution towards the equilibrium state takes place with much delay and takes a long interval. As we discussed in Section 4.1 \cite{kobayashi:2021a}, this delay and duration are determined by the function $\Sigma(t)$ defined by (18), ibid, which in turn depends on $s(t)$. The inverse of $\Sigma(t)$ is plotted in Figure 13, ibid. The transition takes place when $s(t)$ decreases toward around 5 ($\exp(-5)\approx 0.0067$) and further decreases towards  

\end{enumerate}

\begin{figure}[thb]
\begin{minipage}[t]{0.30\textwidth}
\centering
\includegraphics[width=\textwidth]{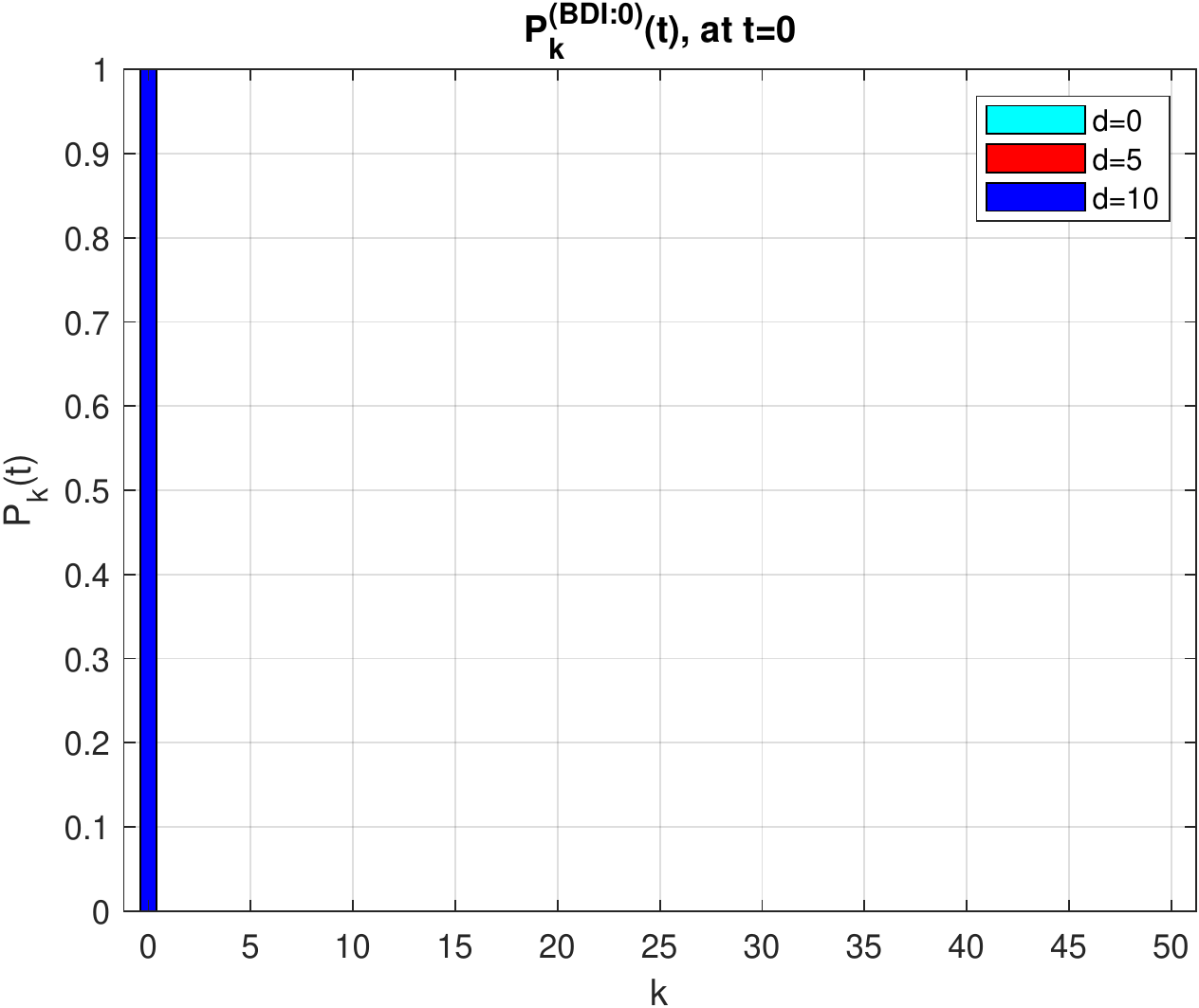}
\caption{\sf $P_k(0)$ of BDI.}
\label{fig:P_k(0)-BDI}
\end{minipage}
\hspace{0.5cm}
\begin{minipage}[t]{0.30\textwidth}
\centering
\includegraphics[width=\textwidth]{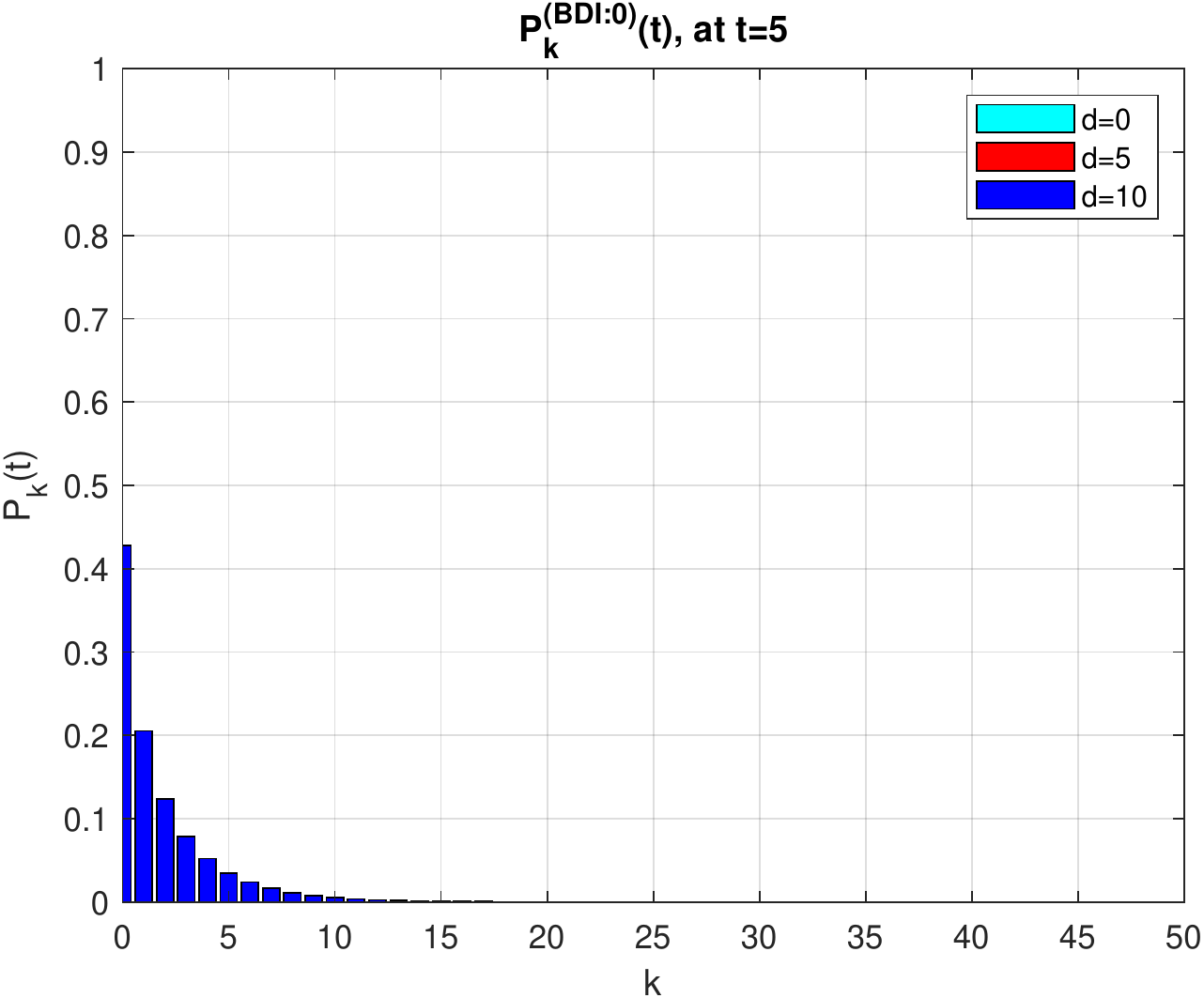}
\caption{\sf $P_k(5)$ of BDI.}
\label{fig:P_k(5)-BDI}
\end{minipage}
\hspace{0.5cm}
\begin{minipage}[t]{0.30\textwidth}
\centering
\includegraphics[width=\textwidth]{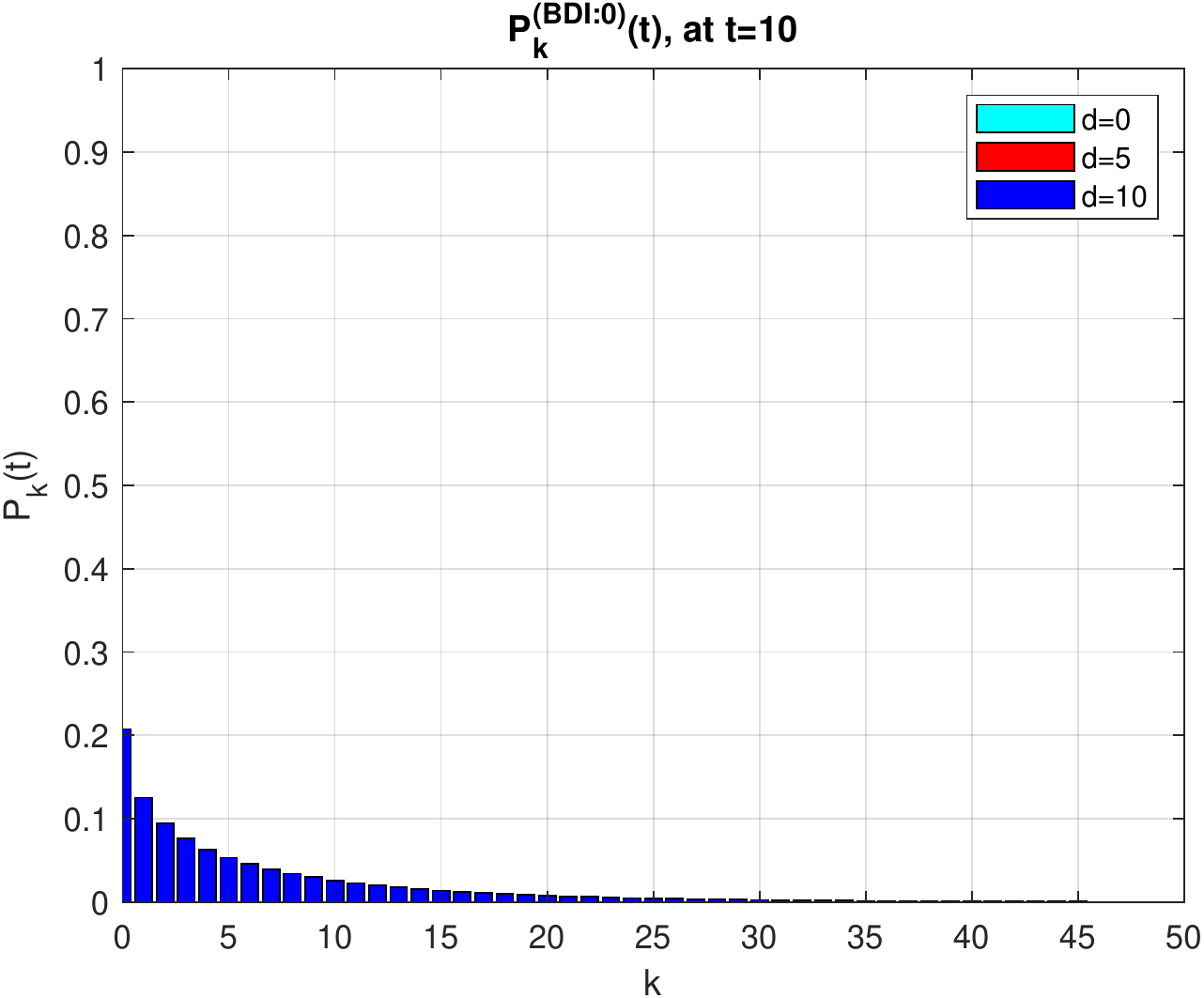}
\caption{\sf $P_k(10)$ of BDI.}
\label{fig:P_k(10)-BDI}
\end{minipage}
\end{figure}
\begin{figure}[hbt]
\begin{minipage}[h]{0.30\textwidth}
\centering
\includegraphics[width=\textwidth]{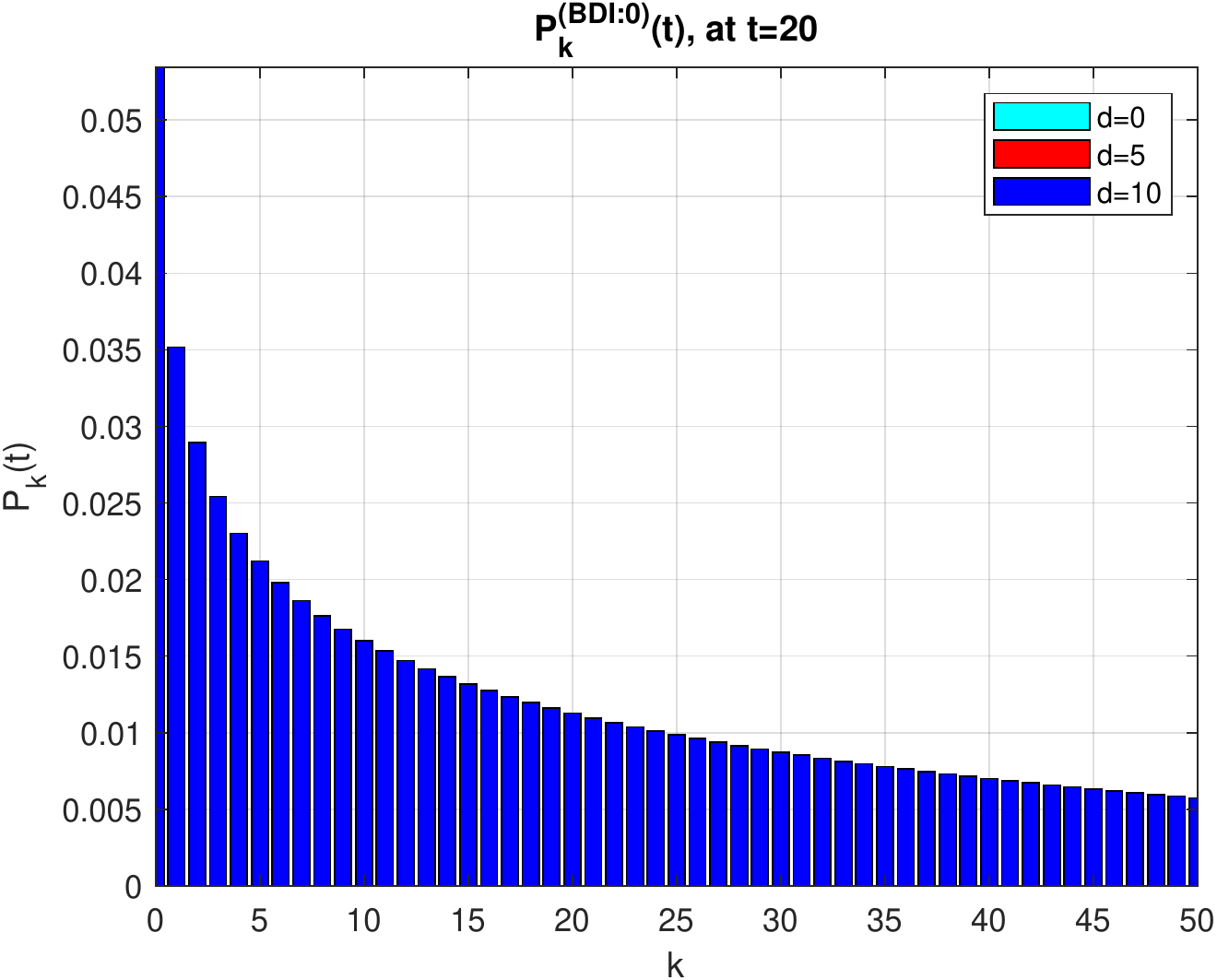}
\caption{\sf  $P_k(20)$ of BDI, scaled up.}
\label{fig:P_k(20)-BDI-up}
\end{minipage}
\hspace{0.5cm}
\begin{minipage}[h]{0.30\textwidth}
\centering
\includegraphics[width=\textwidth]{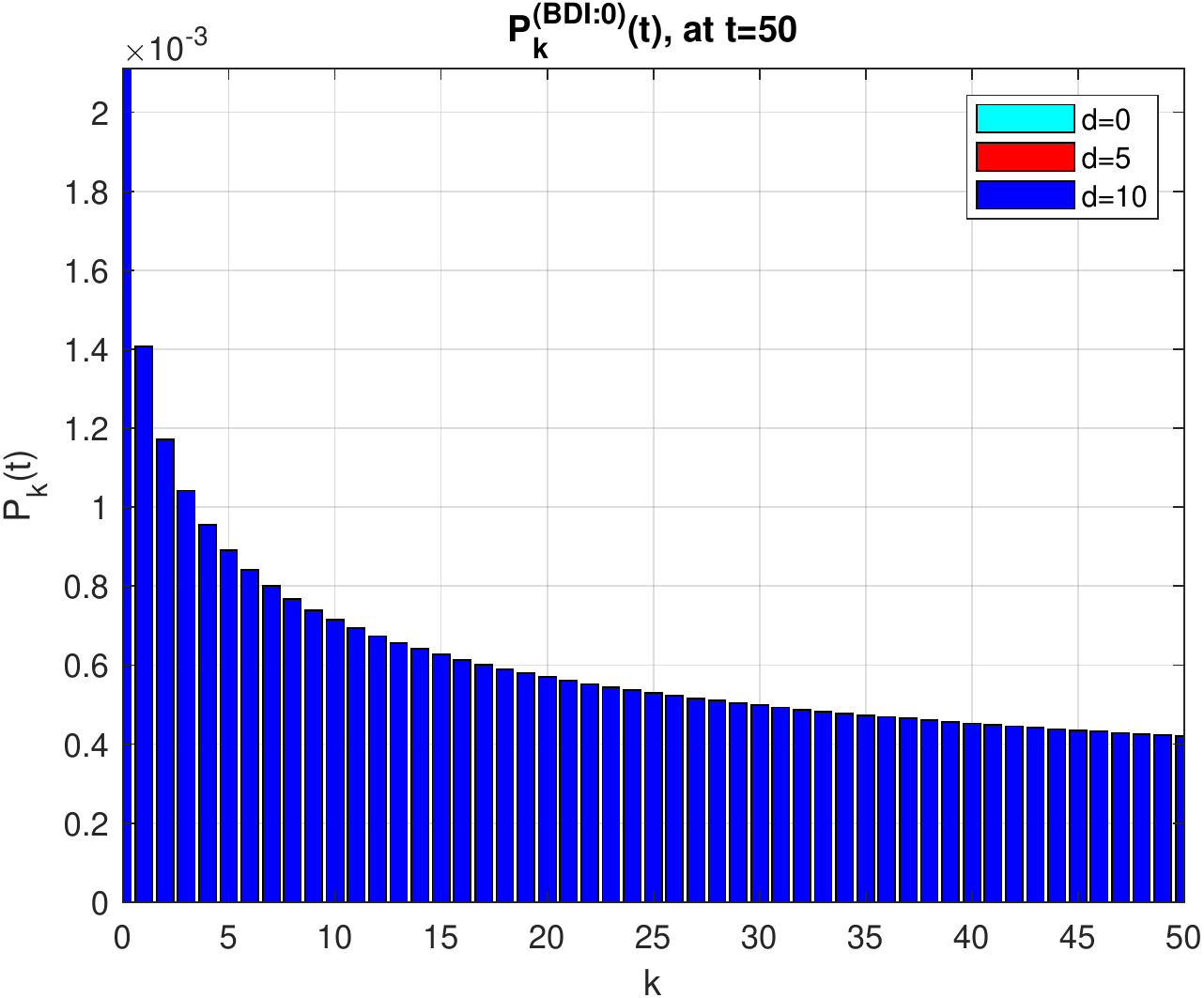}
\caption{\sf $P_k(50)$ of BDI, scaled up.}
\label{fig:P_k(50)-BDI-up}
\end{minipage}
\hspace{0.5cm}
\begin{minipage}[h]{0.30\textwidth}
\centering
\includegraphics[width=\textwidth]{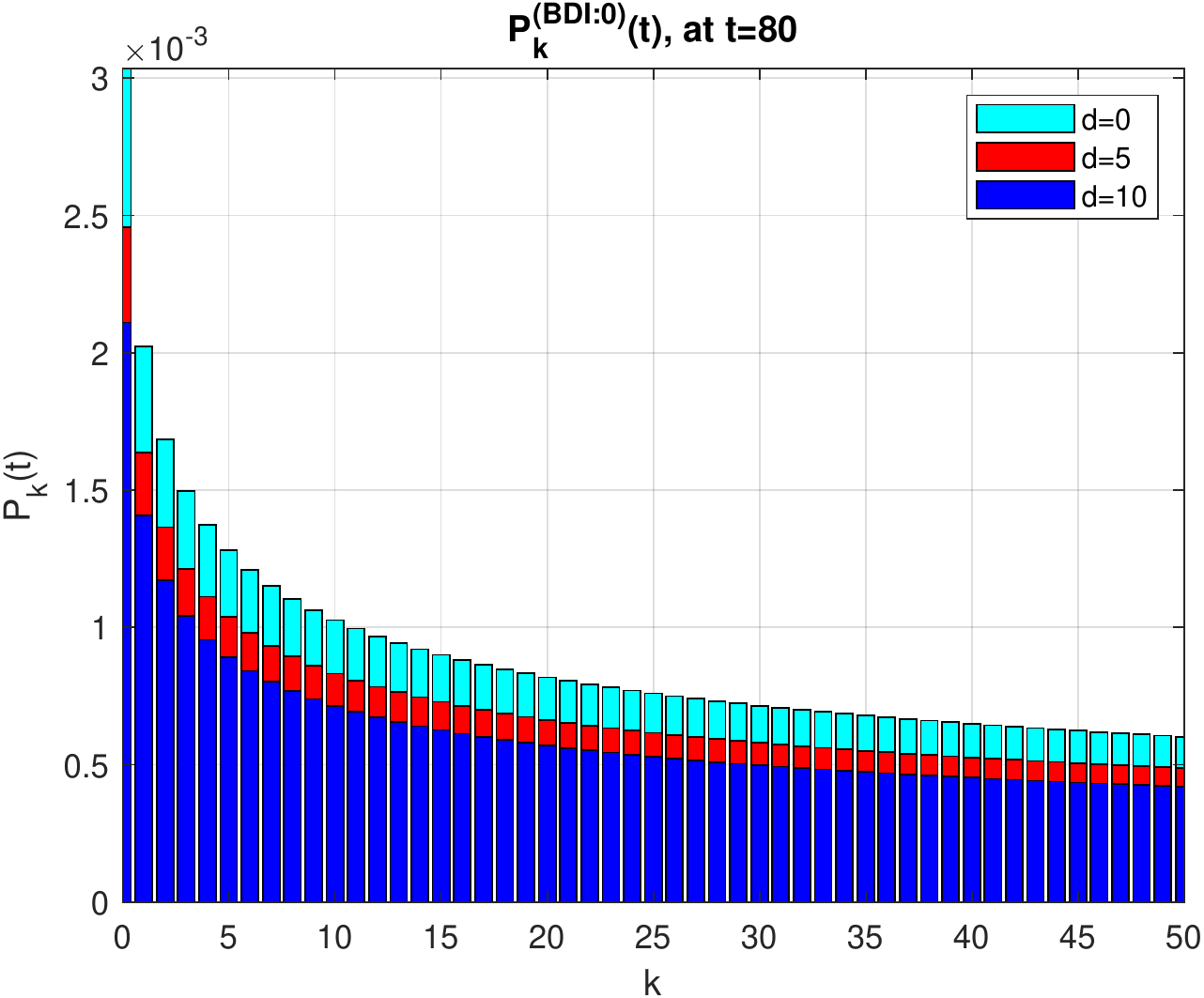}
\caption{\sf $P_k(80)$ of BDI, scaled up.}
\label{fig:P_k(80)-BDI-up}
\end{minipage}
\end{figure}
\begin{figure}[thb]
\begin{minipage}[t]{0.30\textwidth}
\centering
\includegraphics[width=\textwidth]{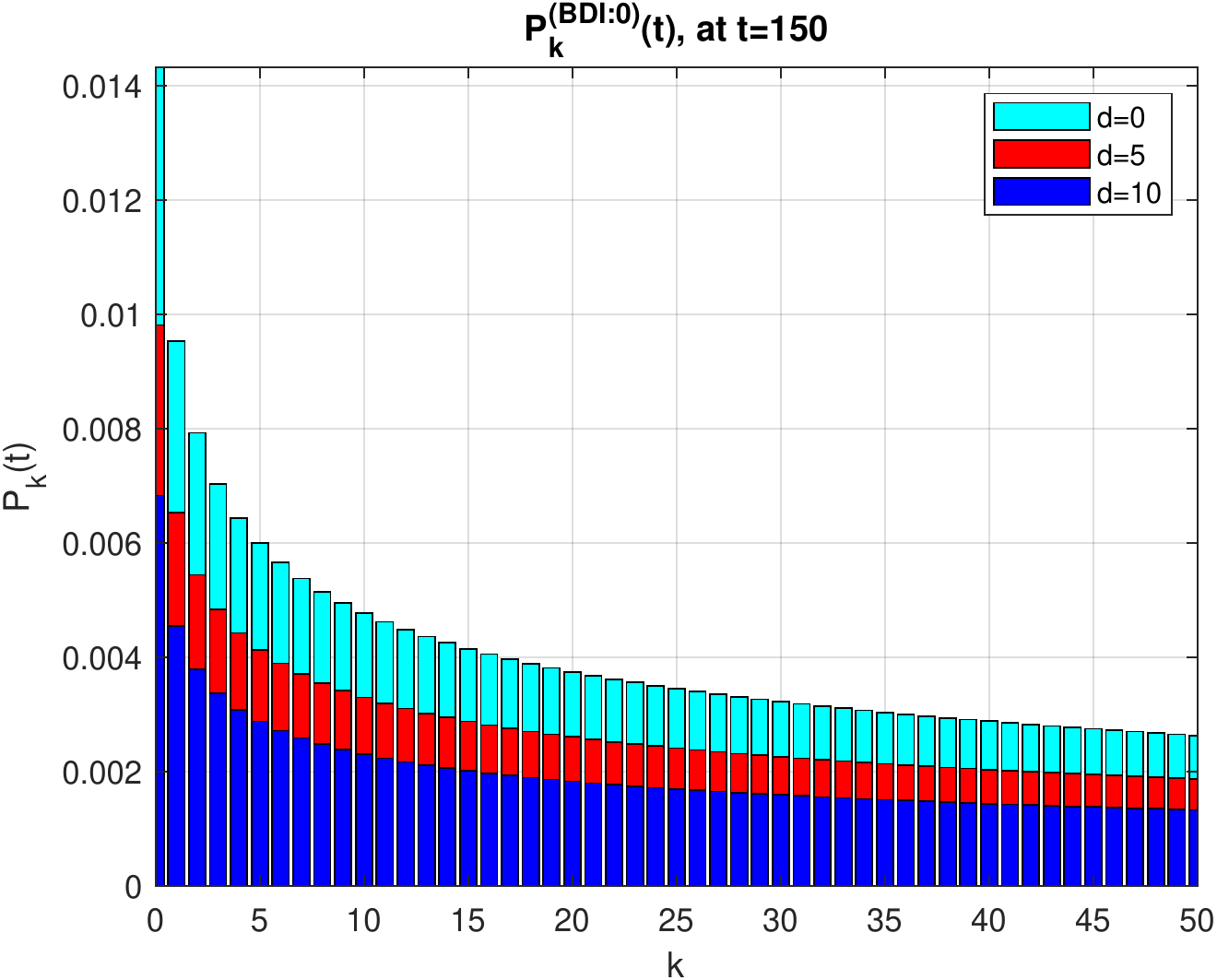}
\caption{\sf $P_k(150)$ of BDI, scaled up.}
\label{fig:P_k(150)-BDI-up}
\end{minipage}
\hspace{0.5cm}
\begin{minipage}[t]{0.30\textwidth}
\centering
\includegraphics[width=\textwidth]{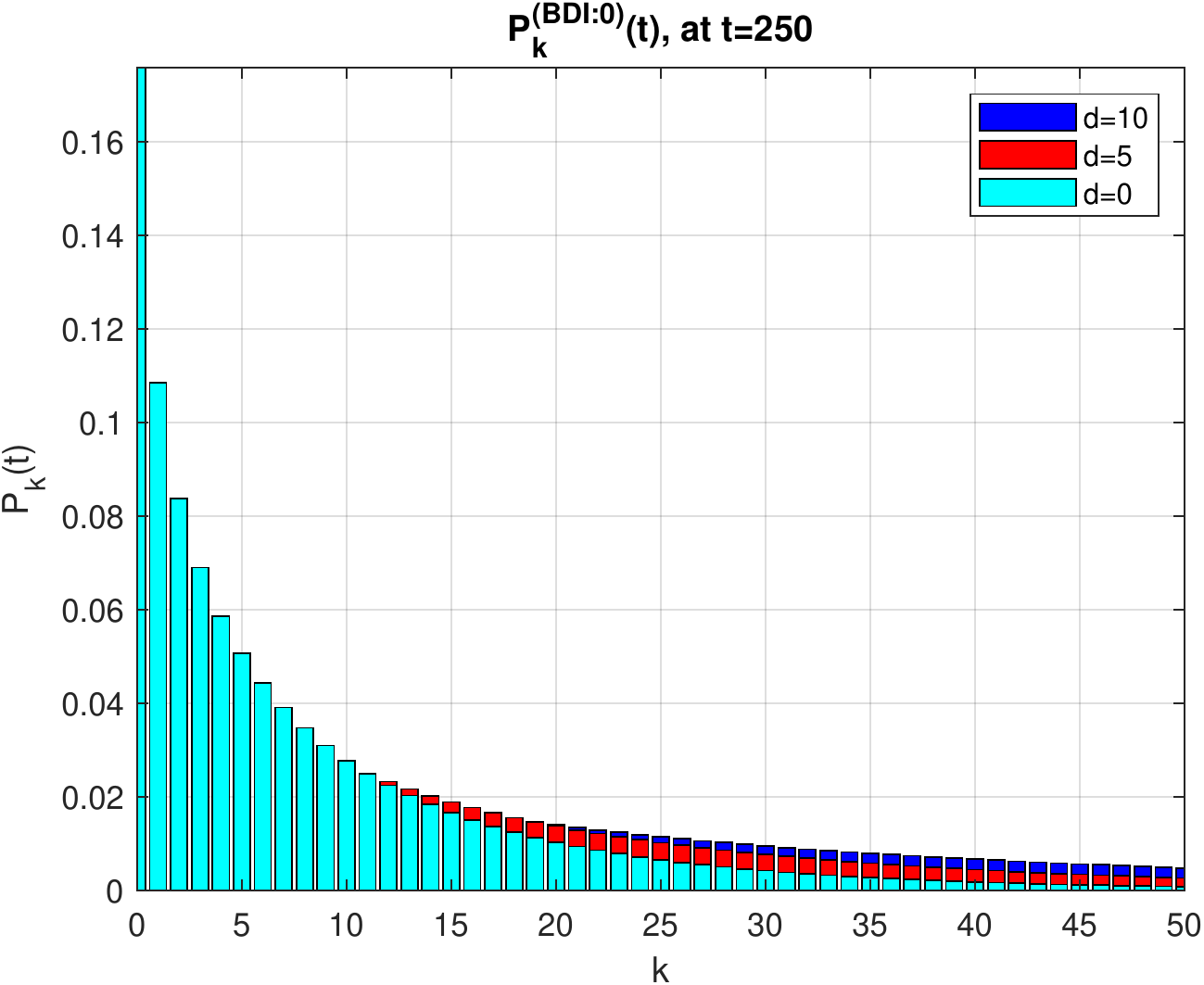}
\caption{\sf $P_k(250)$ of BDI, scaled up, $d=0$ in front.}
\label{fig:P_k(250)-BDI-up-alt}
\end{minipage}
\hspace{0.5cm}
\begin{minipage}[t]{0.30\textwidth}
\centering
\includegraphics[width=\textwidth]{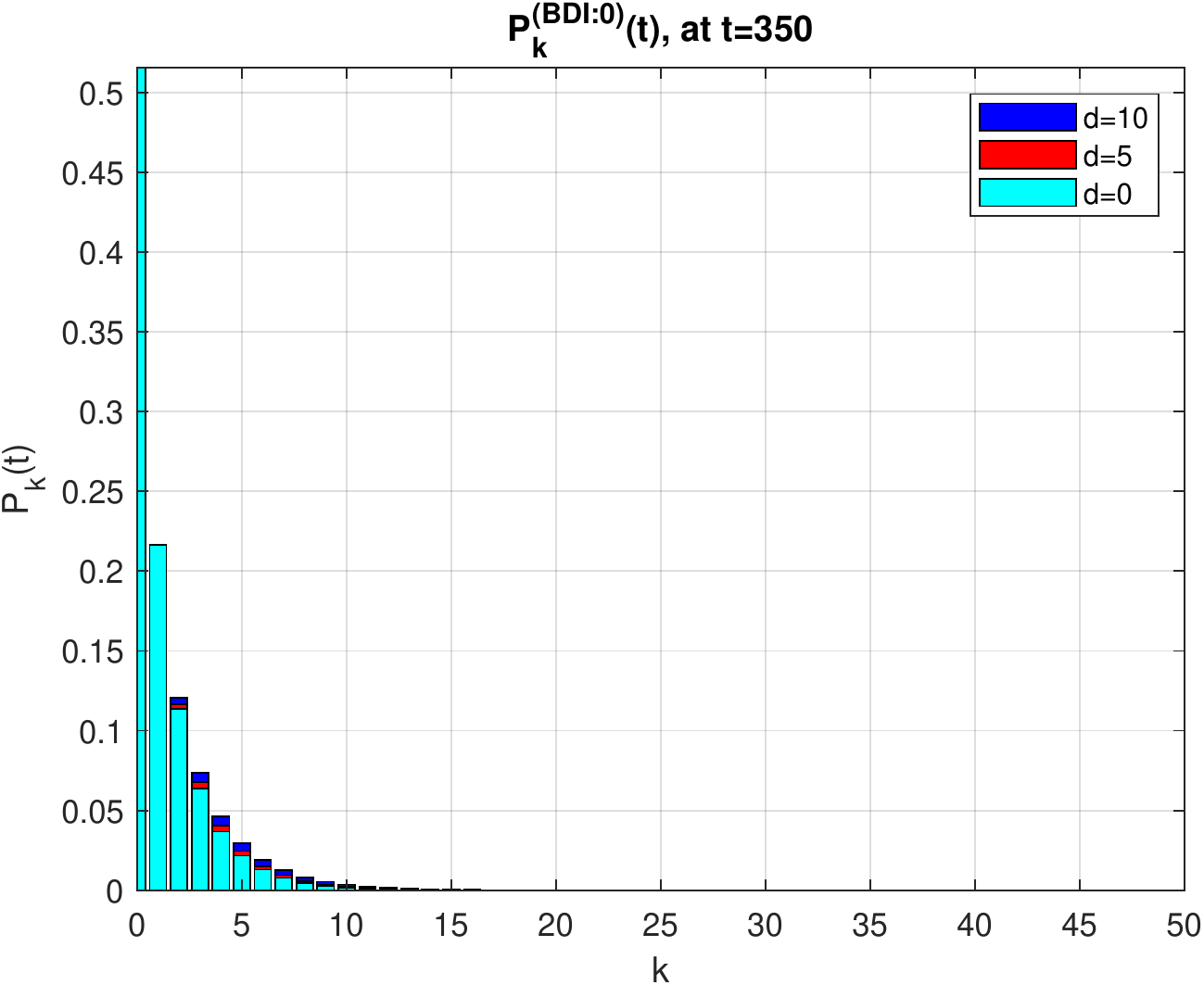}
\caption{\sf $P_k(350)$ of BDI, scaled up, $d=0$ in front.}
\label{fig:P_k(350)-BDI-up-alt}
\end{minipage}
\end{figure}

\clearpage

\subsubsection*{Cross Sections of \boldmath{$P^{(BDI:0)}_k(t)$} along $t$ for given $k$}

\begin{figure}[thb]
\begin{minipage}[t]{0.30\textwidth}
\centering
\includegraphics[width=\textwidth]{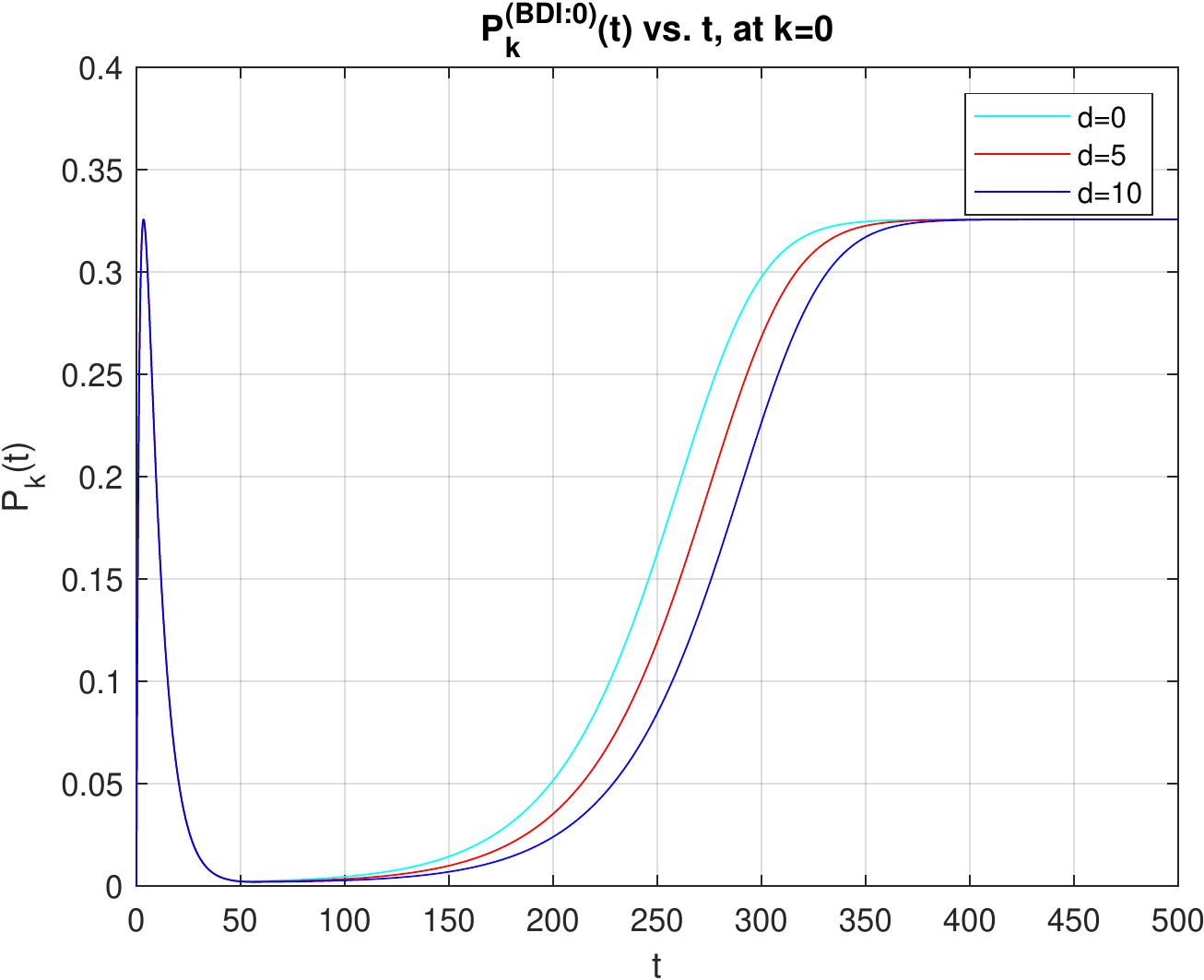}
\caption{\sf $P_0(t)$ of BDI.}
\label{fig:P_0(t)-BDI}
\end{minipage}
\hspace{0.5cm}
\begin{minipage}[t]{0.30\textwidth}
\centering
\includegraphics[width=\textwidth]{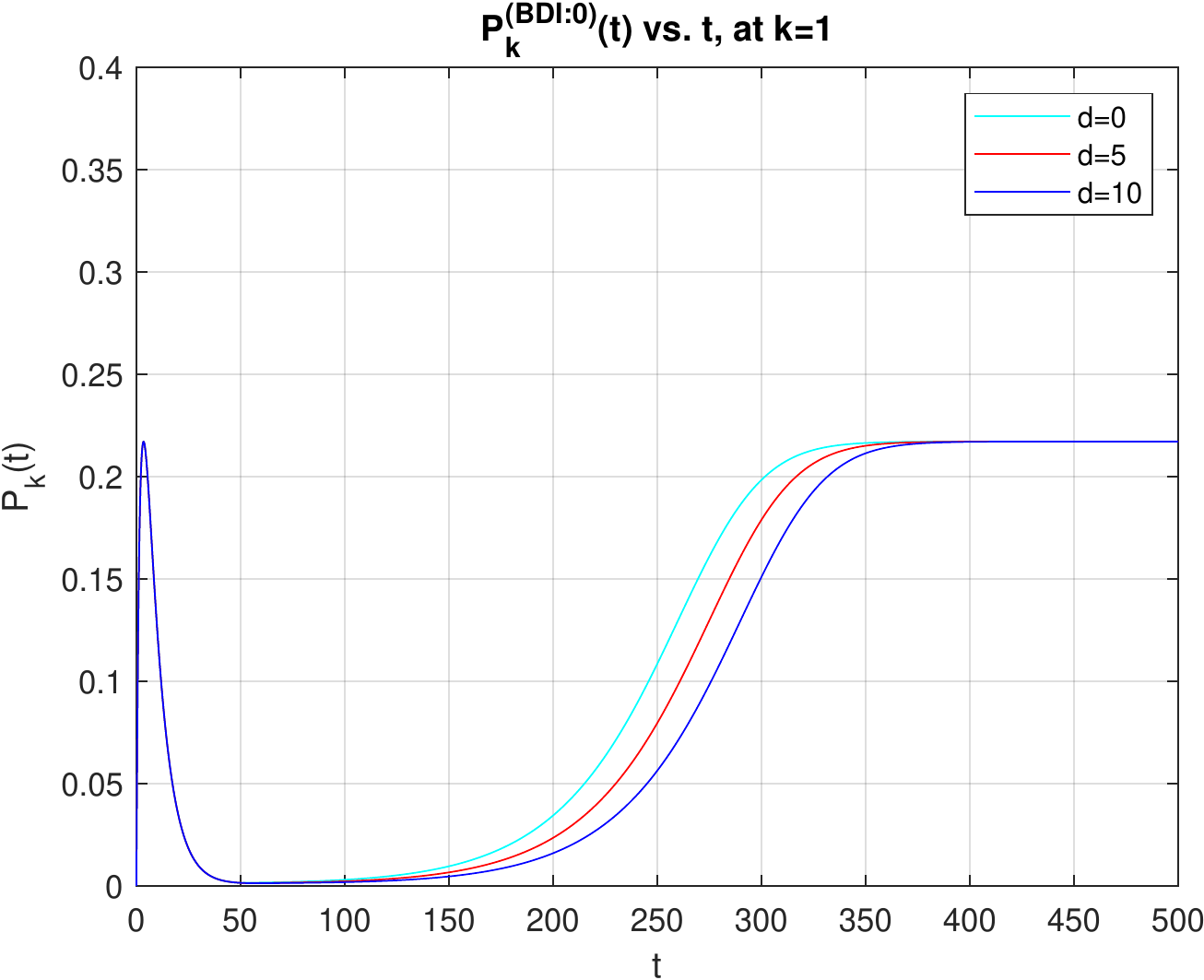}
\caption{\sf $P_1(t)$ of BDI.}
\label{fig:P_1(t)-BDI}
\end{minipage}
\hspace{0.5cm}
\begin{minipage}[t]{0.30\textwidth}
\centering
\includegraphics[width=\textwidth]{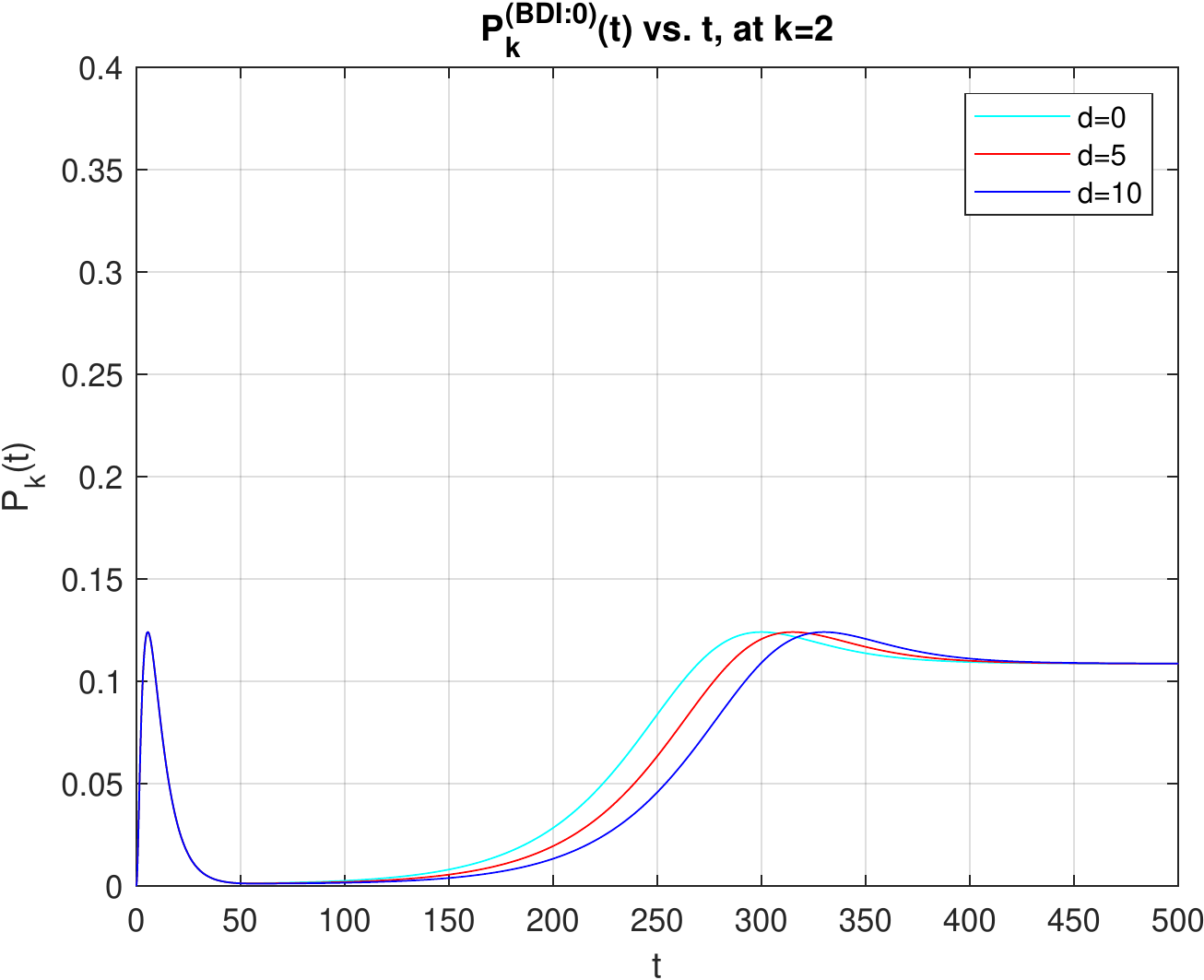}
\caption{\sf $P_2(t)$ of BDI.}
\label{fig:P_2(t)-BDI}
\end{minipage}
\end{figure}
\begin{figure}[hbt]
\begin{minipage}[h]{0.30\textwidth}
\centering
\includegraphics[width=\textwidth]{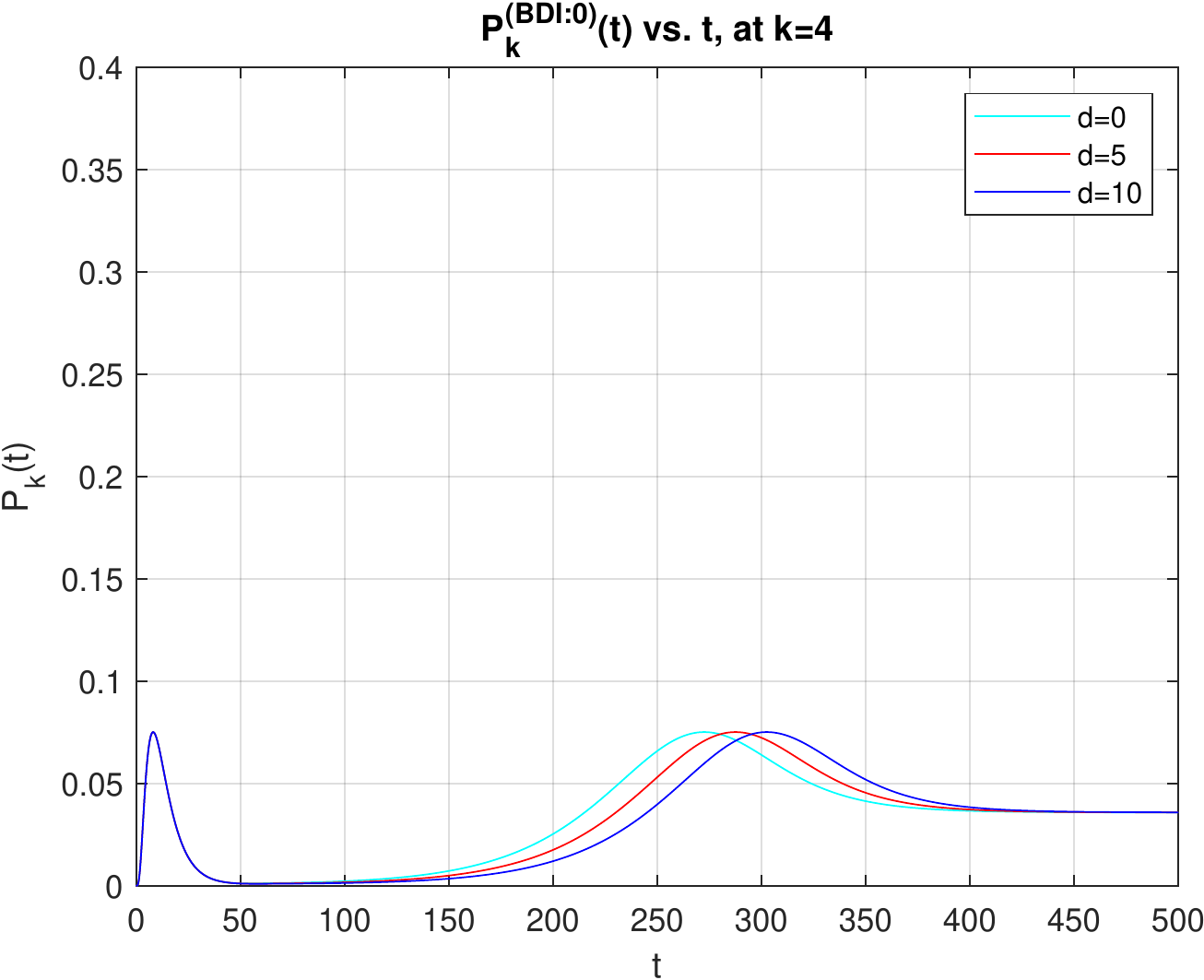}
\caption{\sf $P_4(t)$ of BDI.}
\label{fig:P_4(t)-BDI}
\end{minipage}
\hspace{0.5cm}
\begin{minipage}[h]{0.30\textwidth}
\centering
\includegraphics[width=\textwidth]{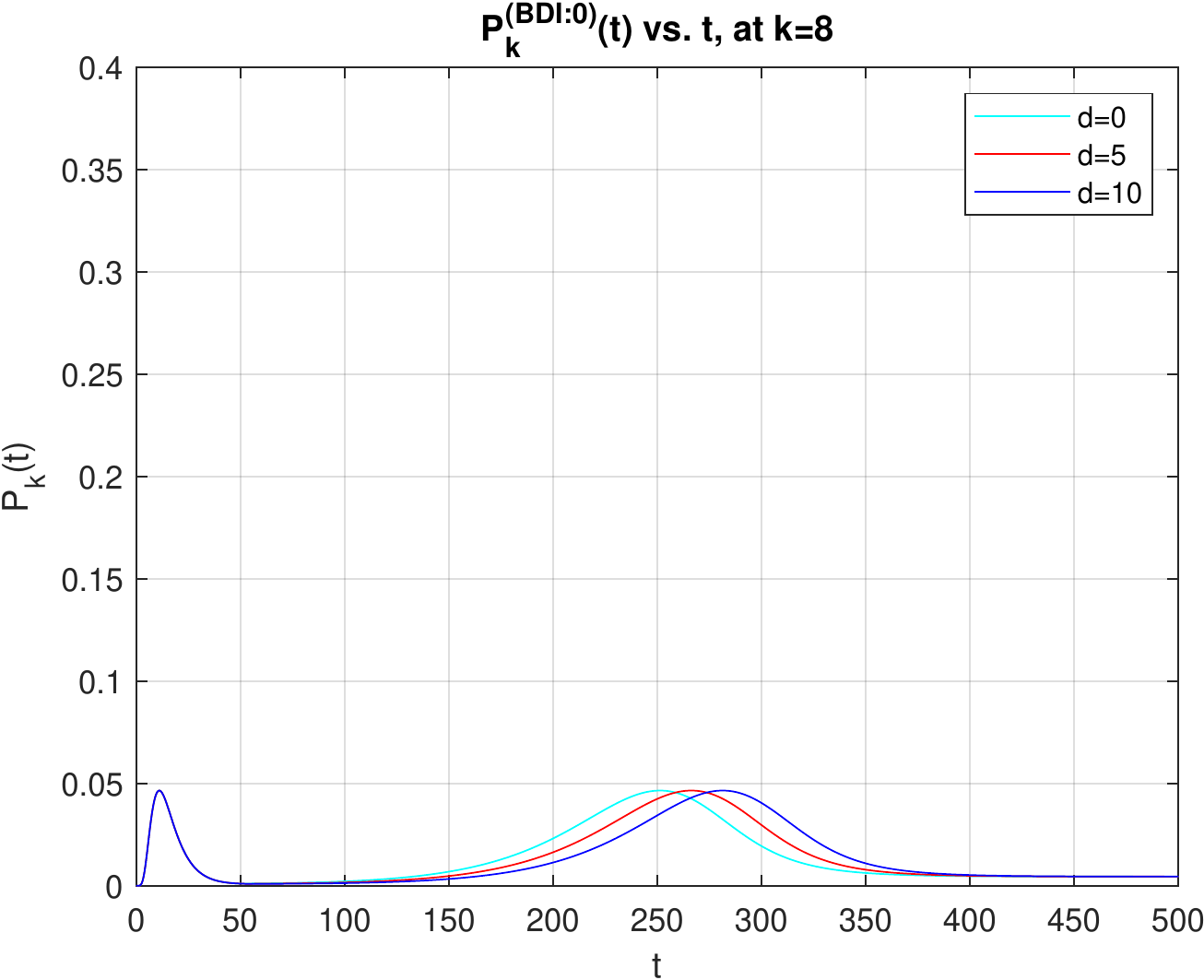}
\caption{\sf $P_8(t)$ of BDI.}
\label{fig:P_8(t)-BDI}
\end{minipage}
\hspace{0.5cm}
\begin{minipage}[h]{0.30\textwidth}
\centering
\includegraphics[width=\textwidth]{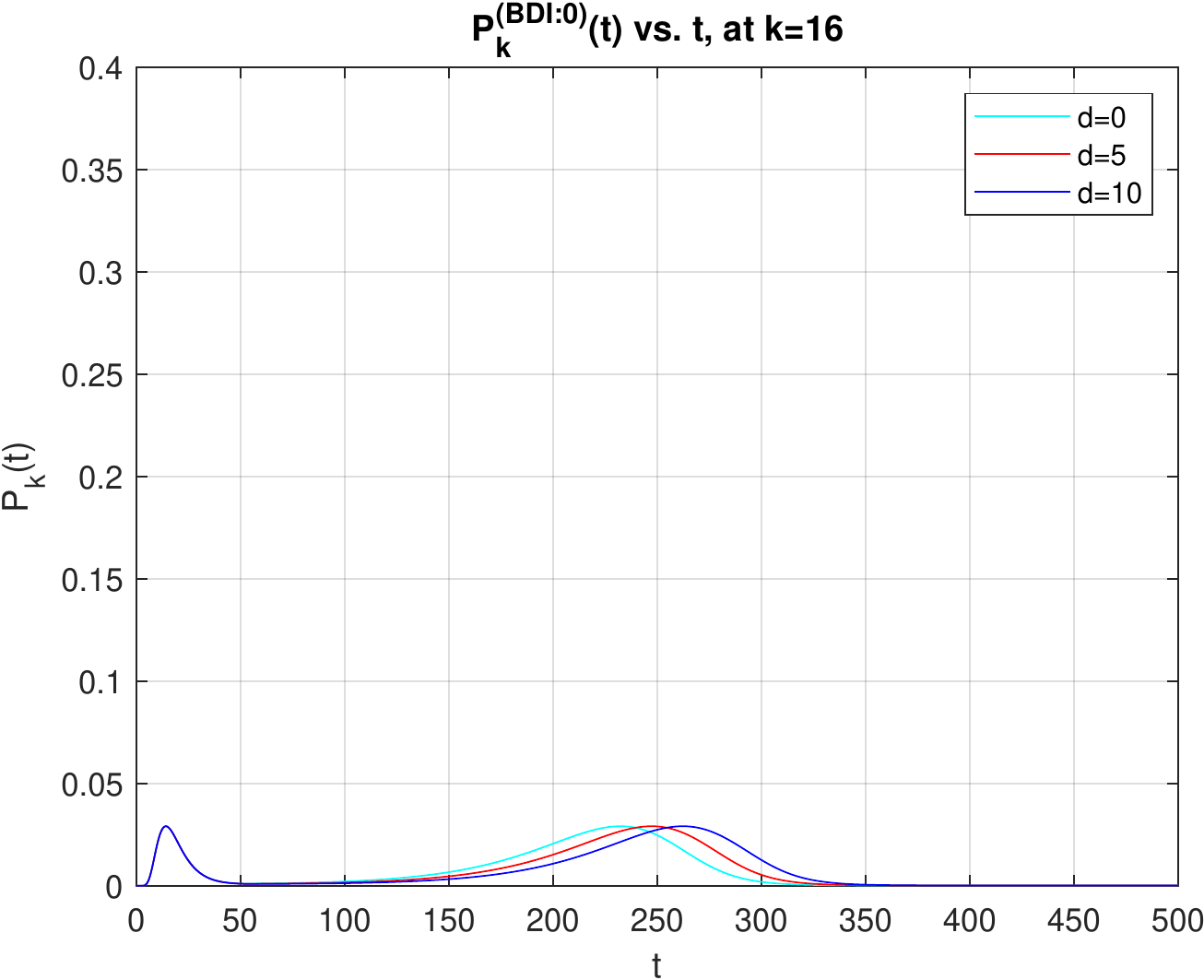}
\caption{\sf $P_{16}(t)$ of BDI.}
\label{fig:P_16(t)-BDI}
\end{minipage}
\end{figure}
\begin{figure}[bht]
\begin{minipage}[b]{0.30\textwidth}
\centering
\includegraphics[width=\textwidth]{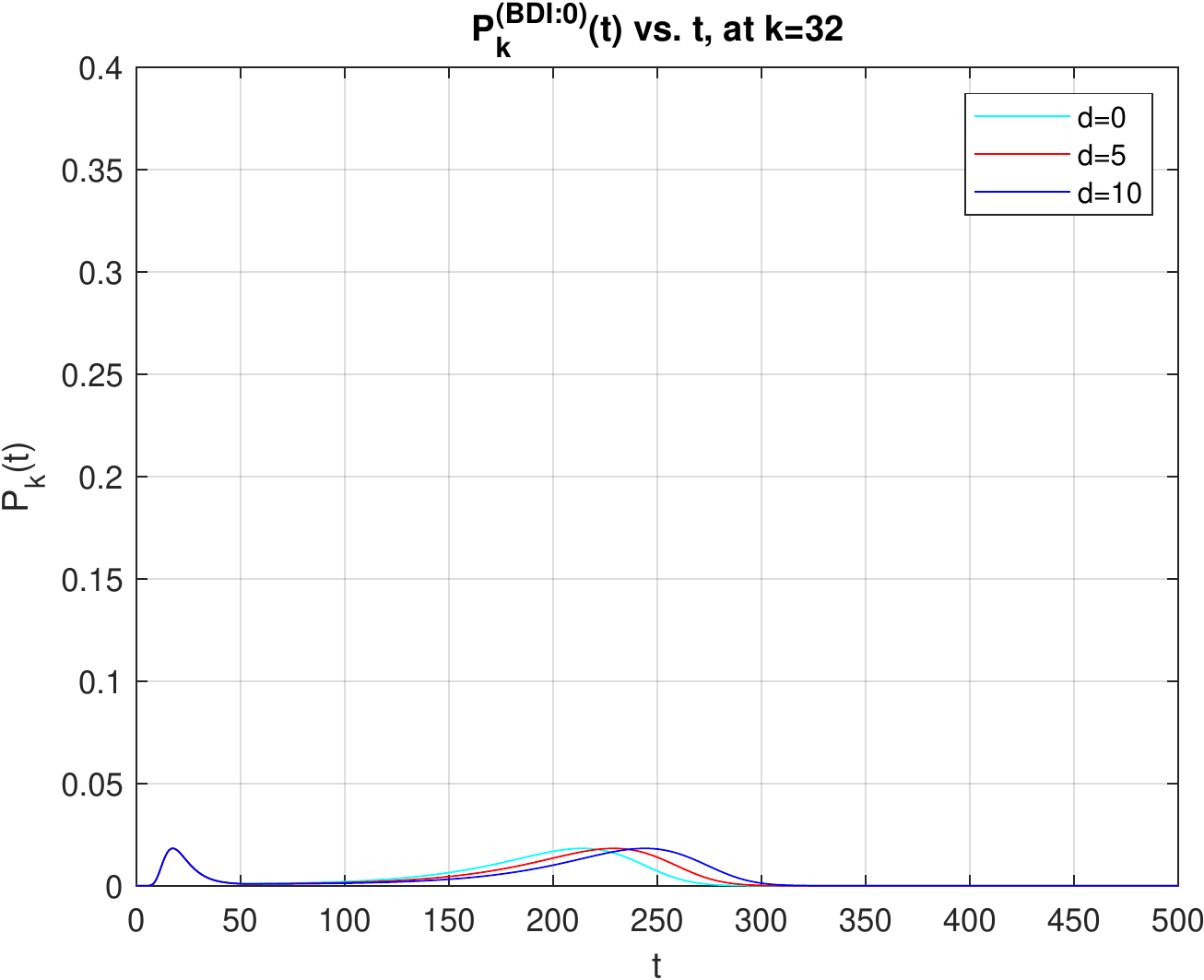}
\caption{\sf $P_{32}(t)$ of BDI.}
\label{fig:P_32(t)-BDI}
\end{minipage}
\hspace{0.5cm}
\begin{minipage}[b]{0.30\textwidth}
\centering
\includegraphics[width=\textwidth]{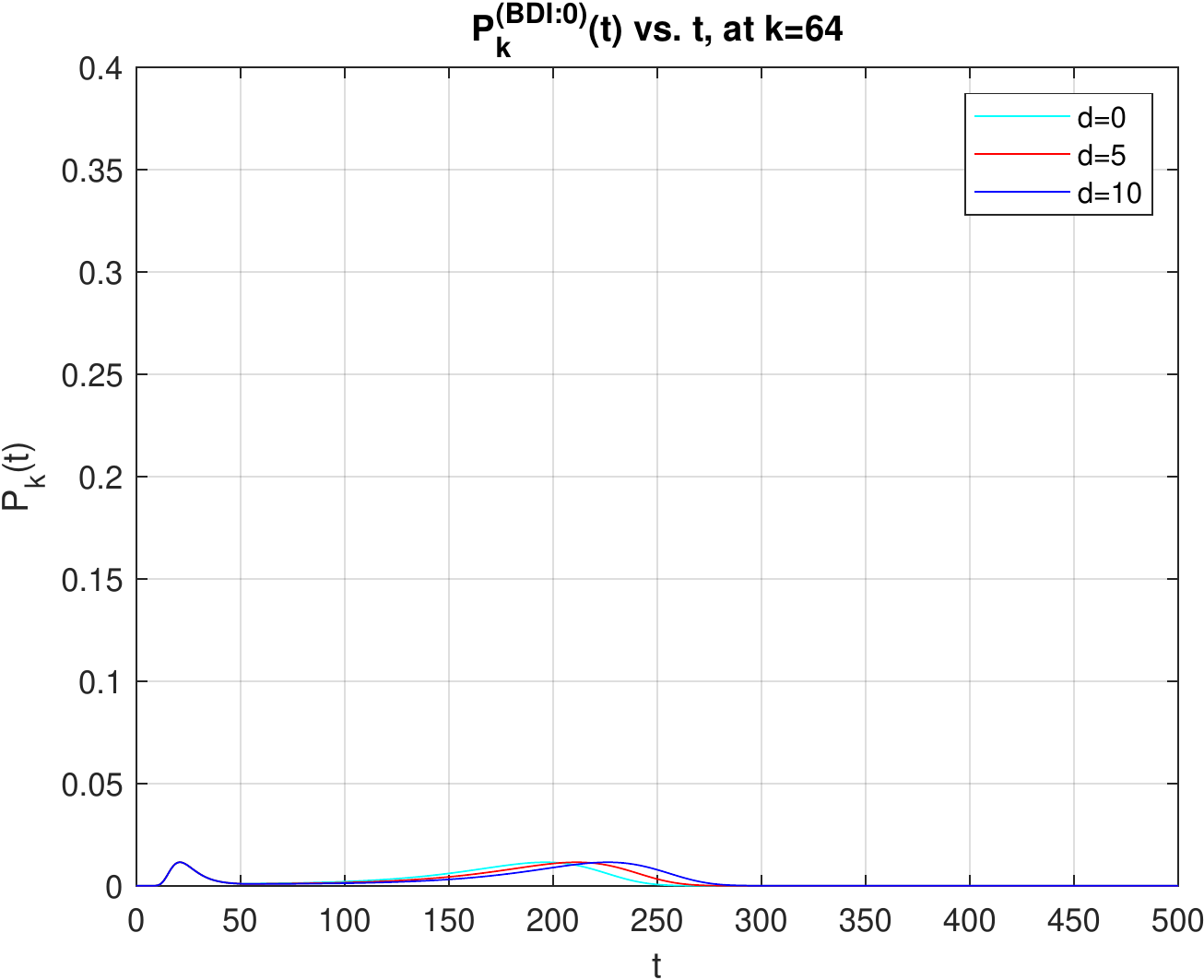}
\caption{\sf $P_{64}(t)$ of BDI.}
\label{fig:P_64(t)-BDI}
\end{minipage}
\hspace{0.5cm}
\begin{minipage}[b]{0.30\textwidth}
\centering
\includegraphics[width=\textwidth]{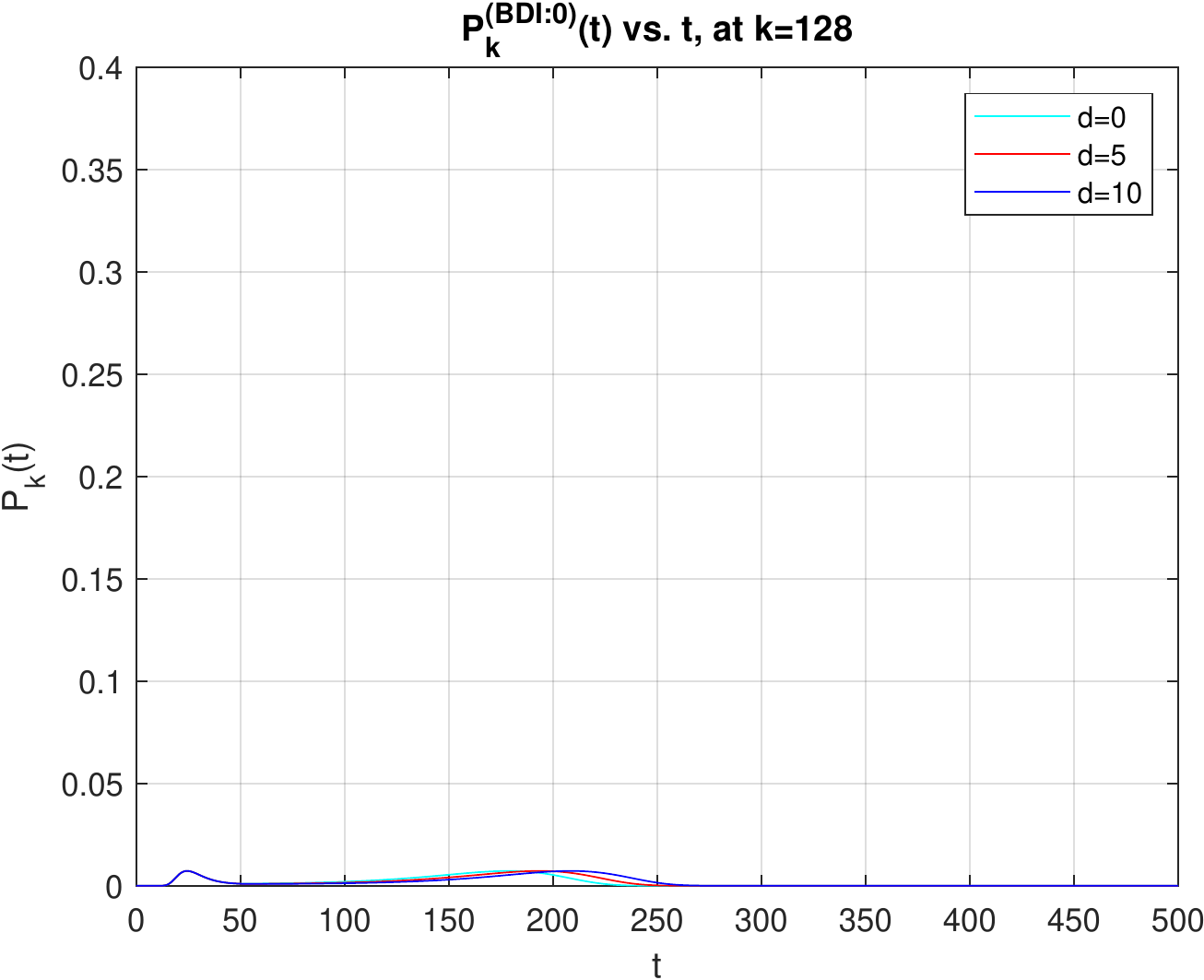}
\caption{\sf $P_{128}(t)$ of BDI.}
\label{fig:P_128(t)-BDI}
\end{minipage}
\end{figure}

\clearpage

\section{Simulating a Time-Nonhomogeneous BD Process with \boldmath{$I_0=1$}} \label{sec:nonhomo-BD-simulation}

We now report on the results of simulating the BD process without immigration to compare them against the analysis conducted in the previous report \cite{kobayashi:2021a}.  We assume the same $\lambda(t)$ and $\mu(t)$ as in the numerical example analyzed in \cite{kobayashi:2021a}.  We consider the case of $d=5$ [days] i.e., the transition curve of $\lambda(t)$ from $\lambda_0=0.3$ to $\lambda_1=0.06$ as shown in the red curve in Figures 1 and 2, ibid. The $\mu(t)$ is treated as constant 0.1.

We assume $I_0=1$ in all cases.  Cases with $I_0>1$ will be discussed in a subsequent report \cite{kobayashi:2021bb}.

\subsection{The Process $I_{BD:1}(t)$}

We conducted simulation experiments by running an event-driven simulator implemented in MATLAB, by extending the simulation scripts used in our earlier simulation experiments \cite{kobayashi:2020b-arXiv} done for time-homogeneous models. 
\begin{figure}[thb]
\begin{minipage}[t]{0.50\textwidth}
\centering
\includegraphics[width=\textwidth]{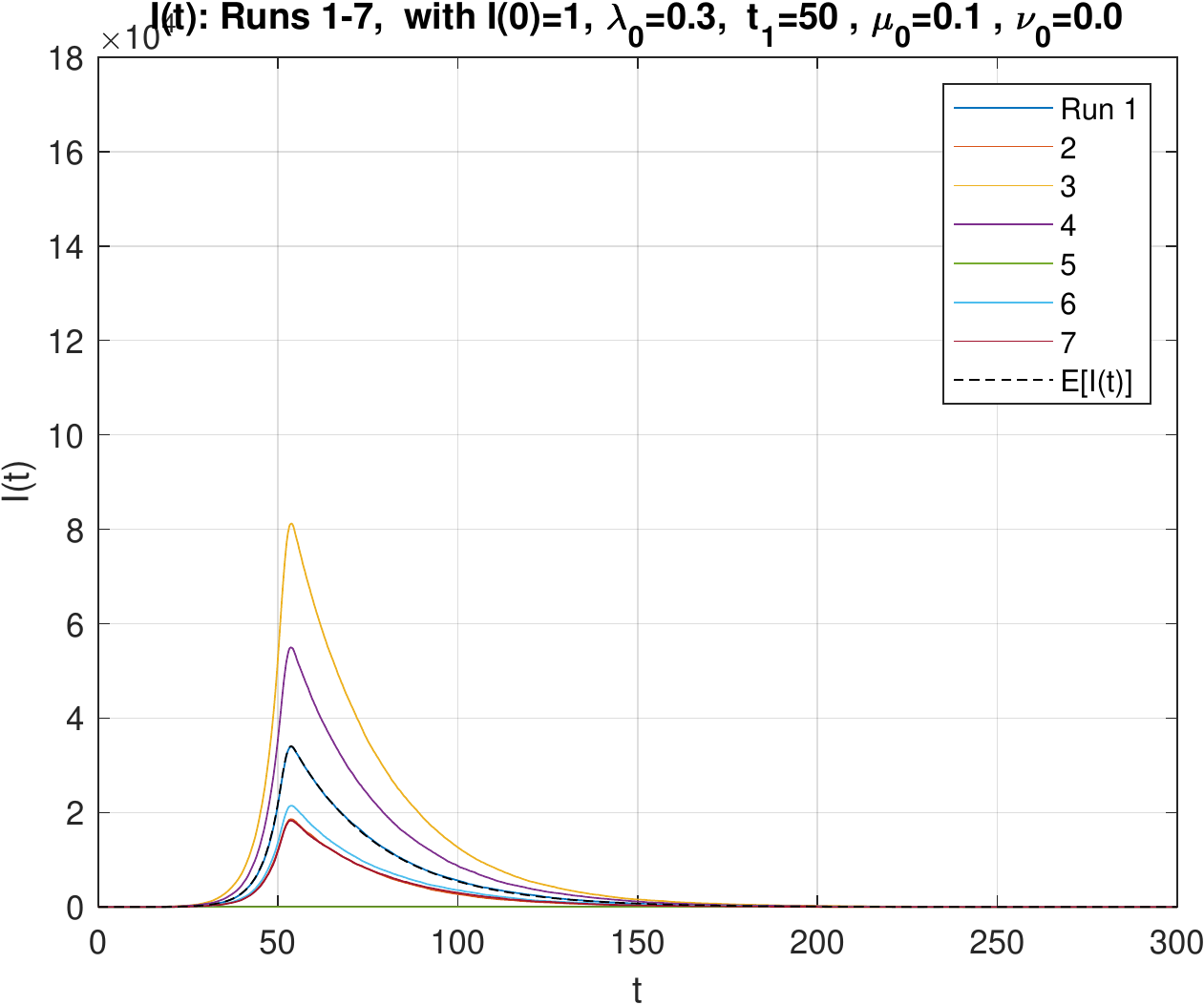}
\caption{\sf Runs 1-7: $I(t)$ of BD process with $I_0=1$.}
\label{fig:BD-I(t)_Runs_1-7-I_0=1}
\end{minipage}
\hspace{0.5cm}
\begin{minipage}[t]{0.50\textwidth}
\centering
\includegraphics[width=\textwidth]{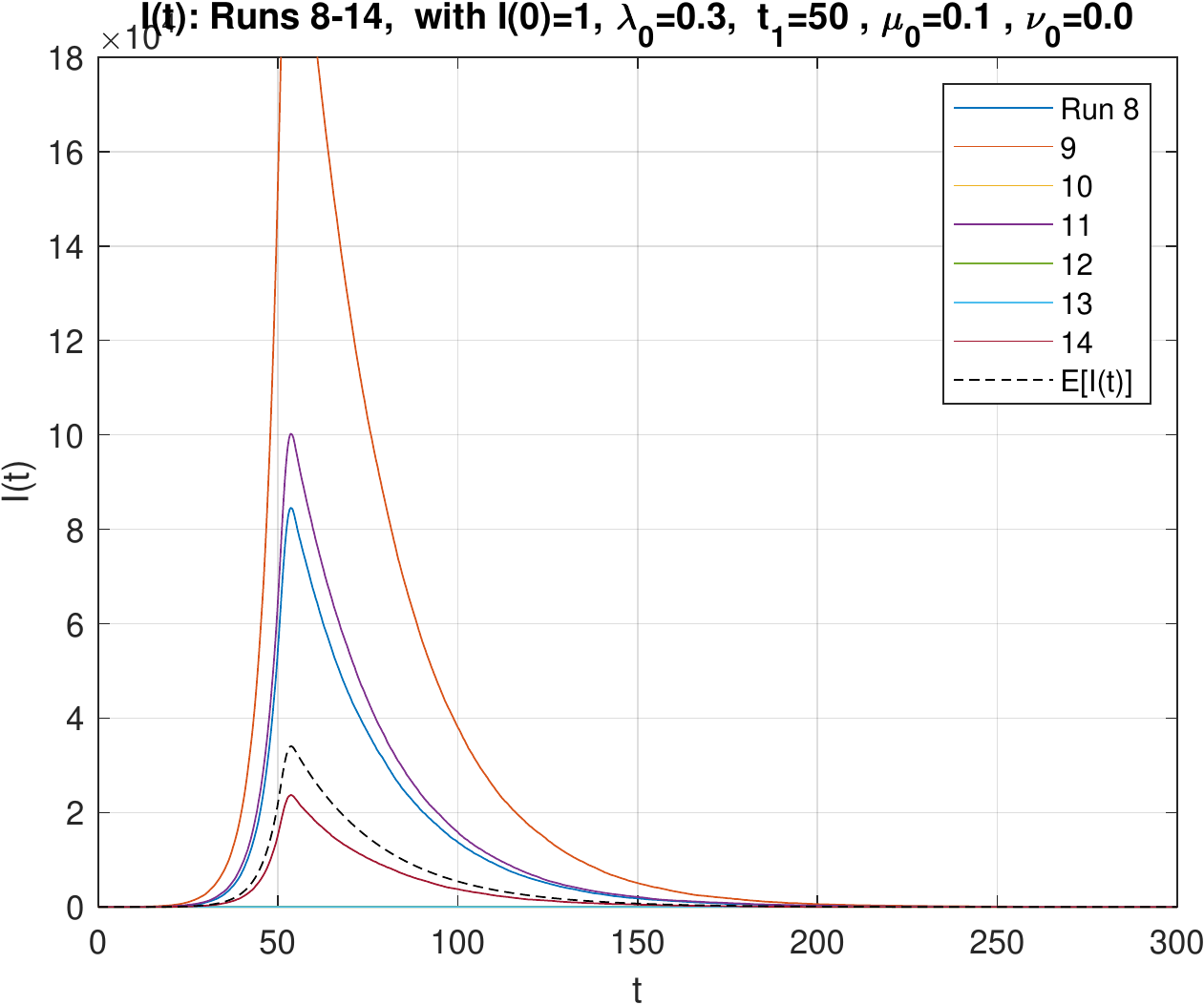}
\caption{\sf Runs 8-14: $I(t)$ of BD process with $I_0=1$.}
\label{fig:BD-I(t)_Runs_8-14-I_0=1}
\end{minipage}
\end{figure}
\begin{figure}
\begin{minipage}[t]{0.50\textwidth}
\centering
\includegraphics[width=\textwidth]{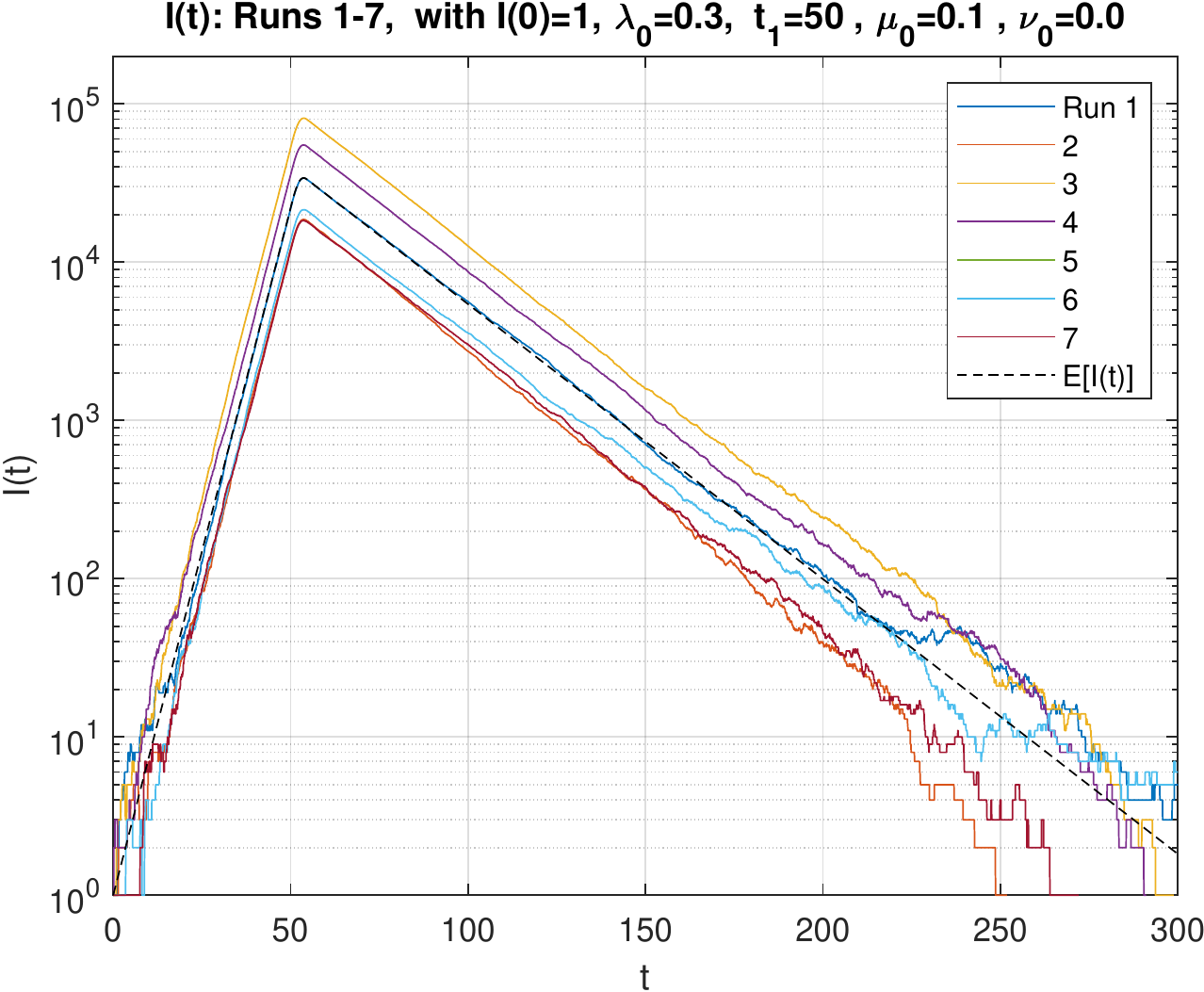}
\caption{\sf Runs 1-7: Semi-log plot: $I(t)$ of BD process with $I_0=1$.}
\label{fig:BD-Semilog_I(t)_Runs_1-7-I_0=1}
\end{minipage}
\hspace{0.5cm}
\begin{minipage}[t]{0.50\textwidth}
\centering
\includegraphics[width=\textwidth]{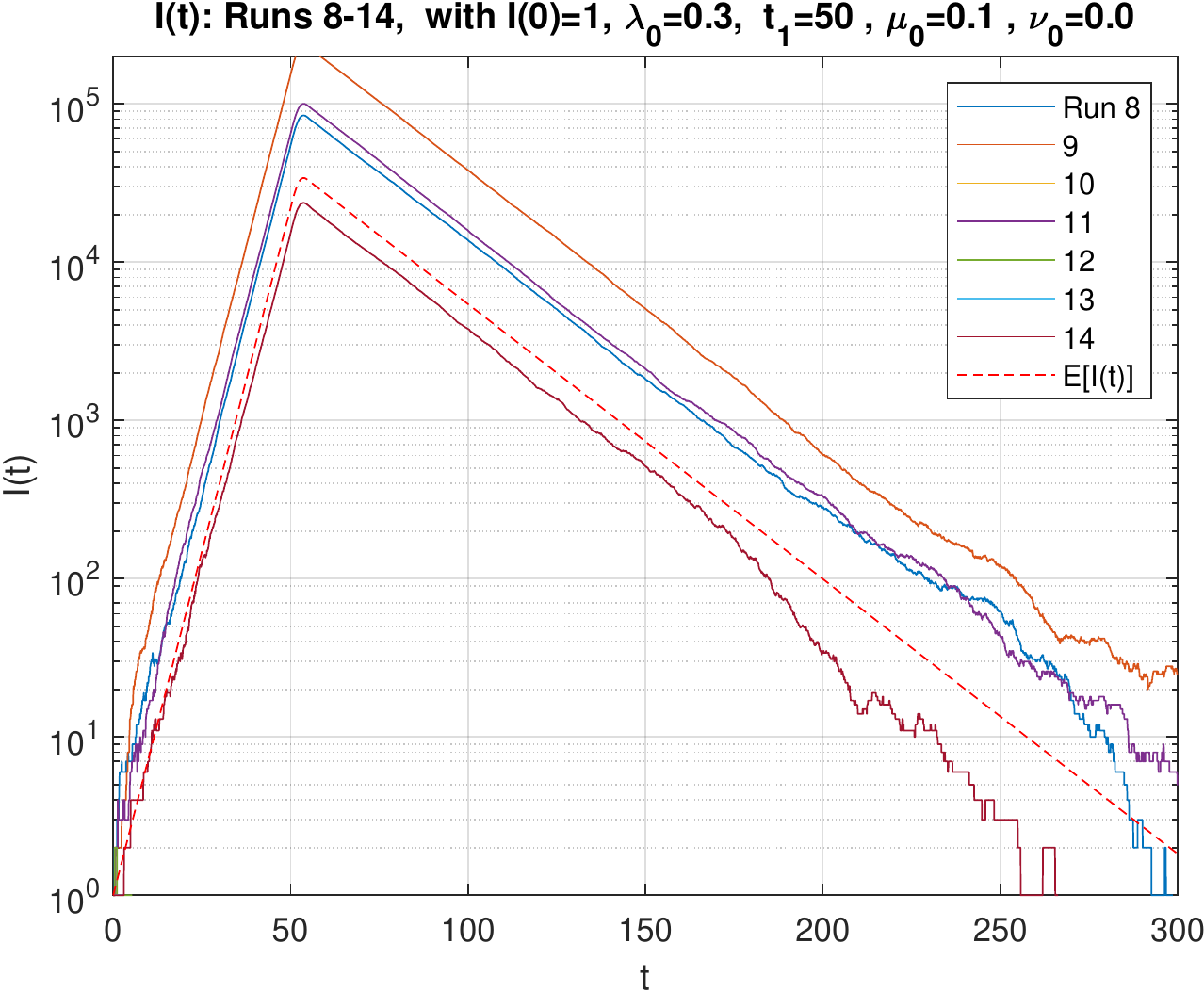}
\caption{\sf Runs 8-14: Semi-log plot: $I(t)$ of BD process with $I_0=1$.}
\label{fig:BD-Semilog_I(t)_Runs_8-14-I_0=1}
\end{minipage}
\end{figure}
\begin{figure}
\begin{minipage}[t]{0.50\textwidth}
\centering
\includegraphics[width=\textwidth]{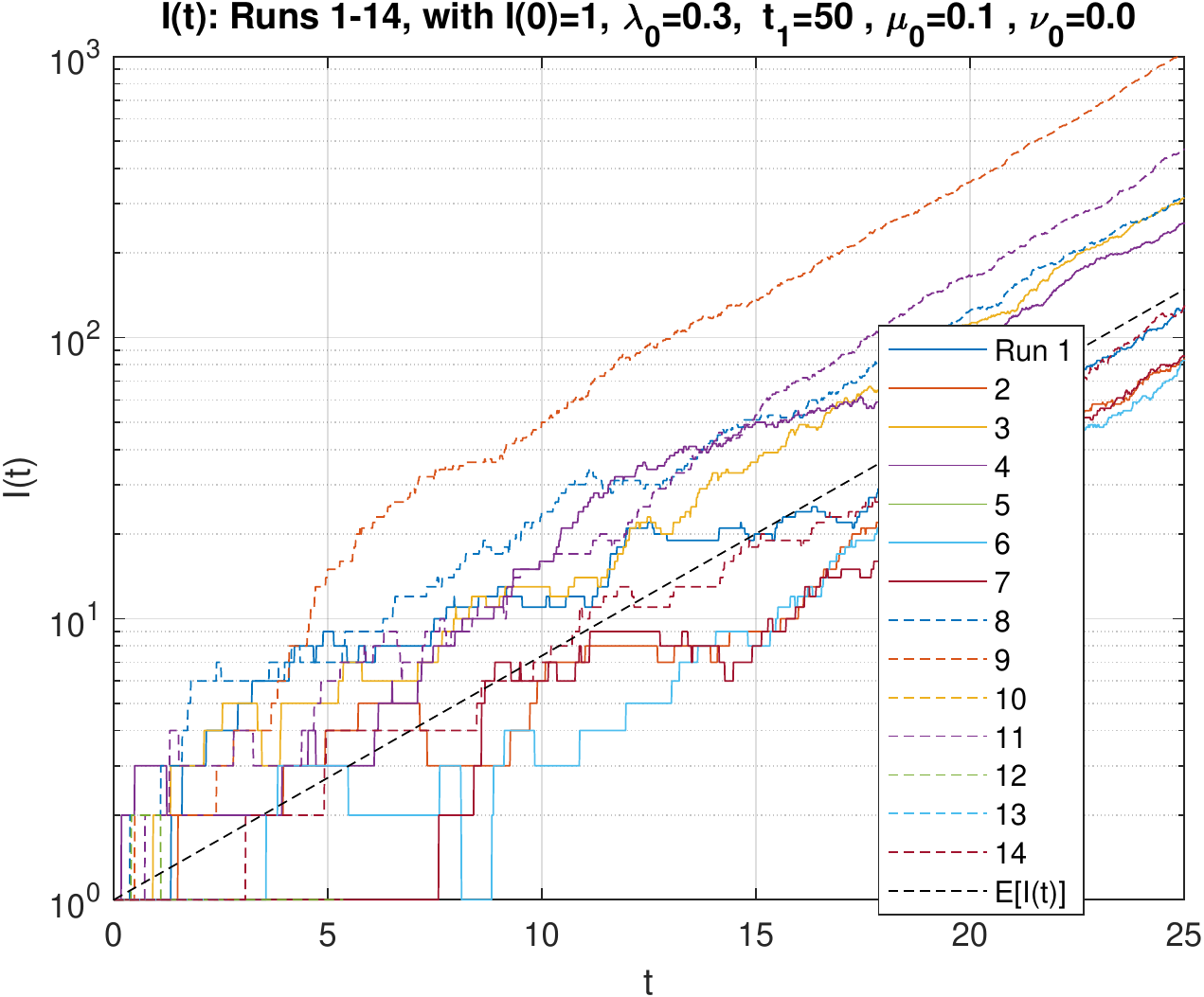}
\caption{\sf Runs 1-14: Initial 25 days, of $I(t)$ of BD process with $I_0=1$.}
\label{fig:BD-Semilog_I(t)_Runs_1-14-t_max=25-I_0=1}
\end{minipage}
\end{figure}
\begin{figure}
\begin{minipage}[t]{0.30\textwidth}
\centering
\includegraphics[width=\textwidth]{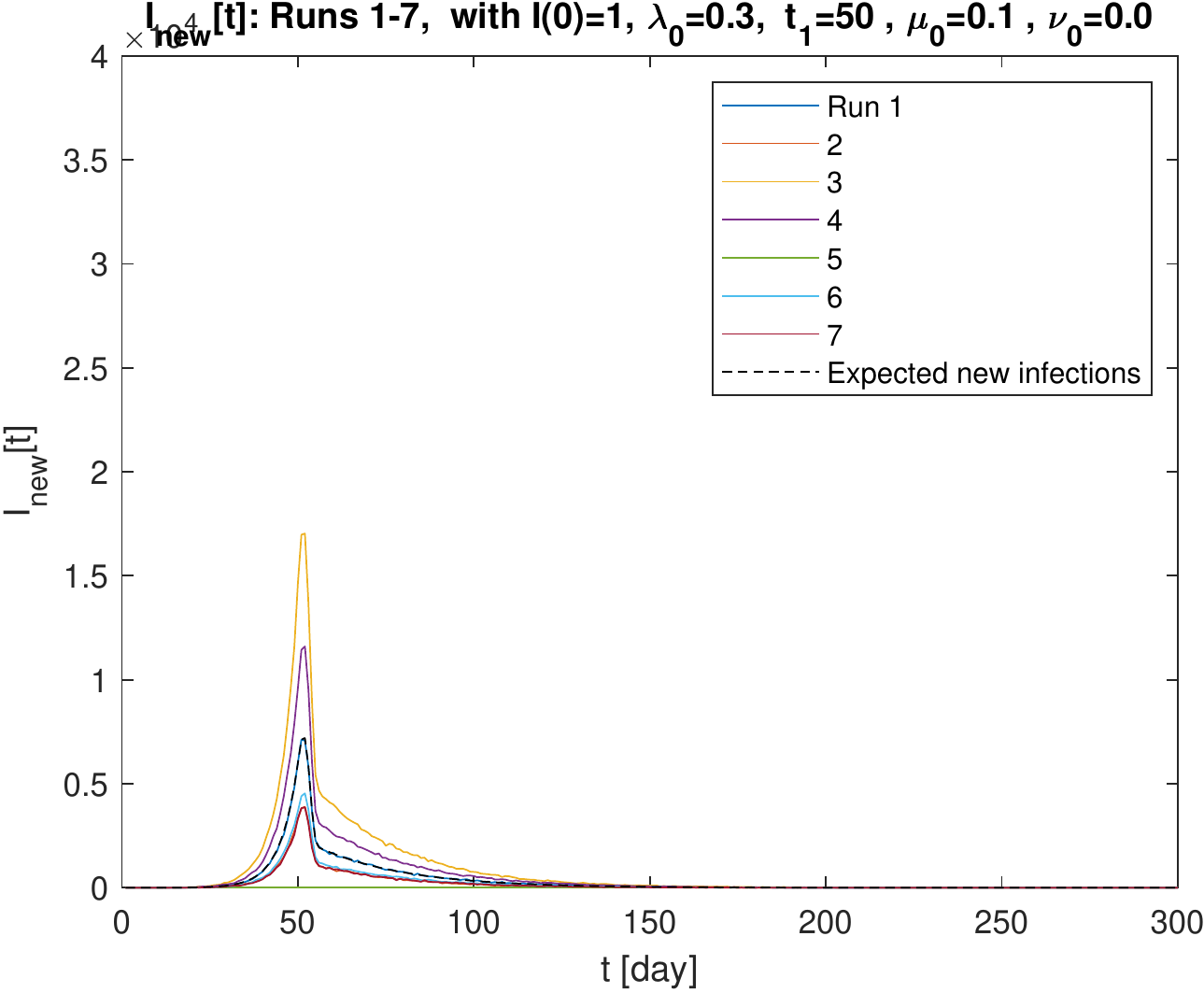}
\caption{\sf Runs 1-7: Daily new infections $I_{new}[t]$ of BD process with $I_0=1$.}
\label{fig:BD-I_new[t]_Runs_1-7-I_0=1}
\end{minipage}
\hspace{0.5cm}
\begin{minipage}[t]{0.30\textwidth}
\centering
\includegraphics[width=\textwidth]{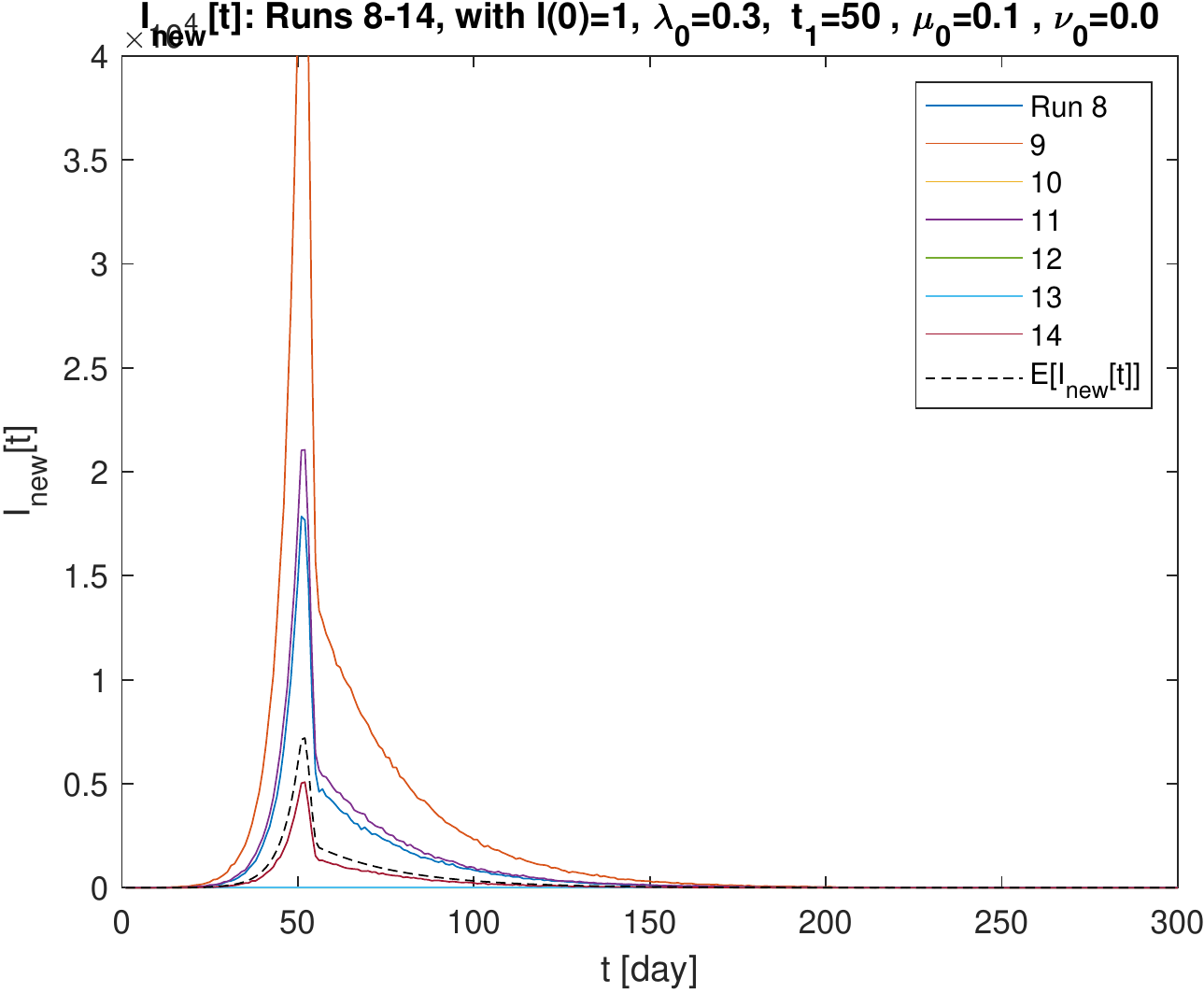}
\caption{\sf Runs 8-14: Daily new infections $I_{new}[t]$ of BD process with $I_0=1$.}
\label{fig:BD-I_new[t]_Runs_8-14-I_0=1}
\end{minipage}
\hspace{0.5cm}
\begin{minipage}[t]{0.30\textwidth}
\centering
\includegraphics[width=\textwidth]{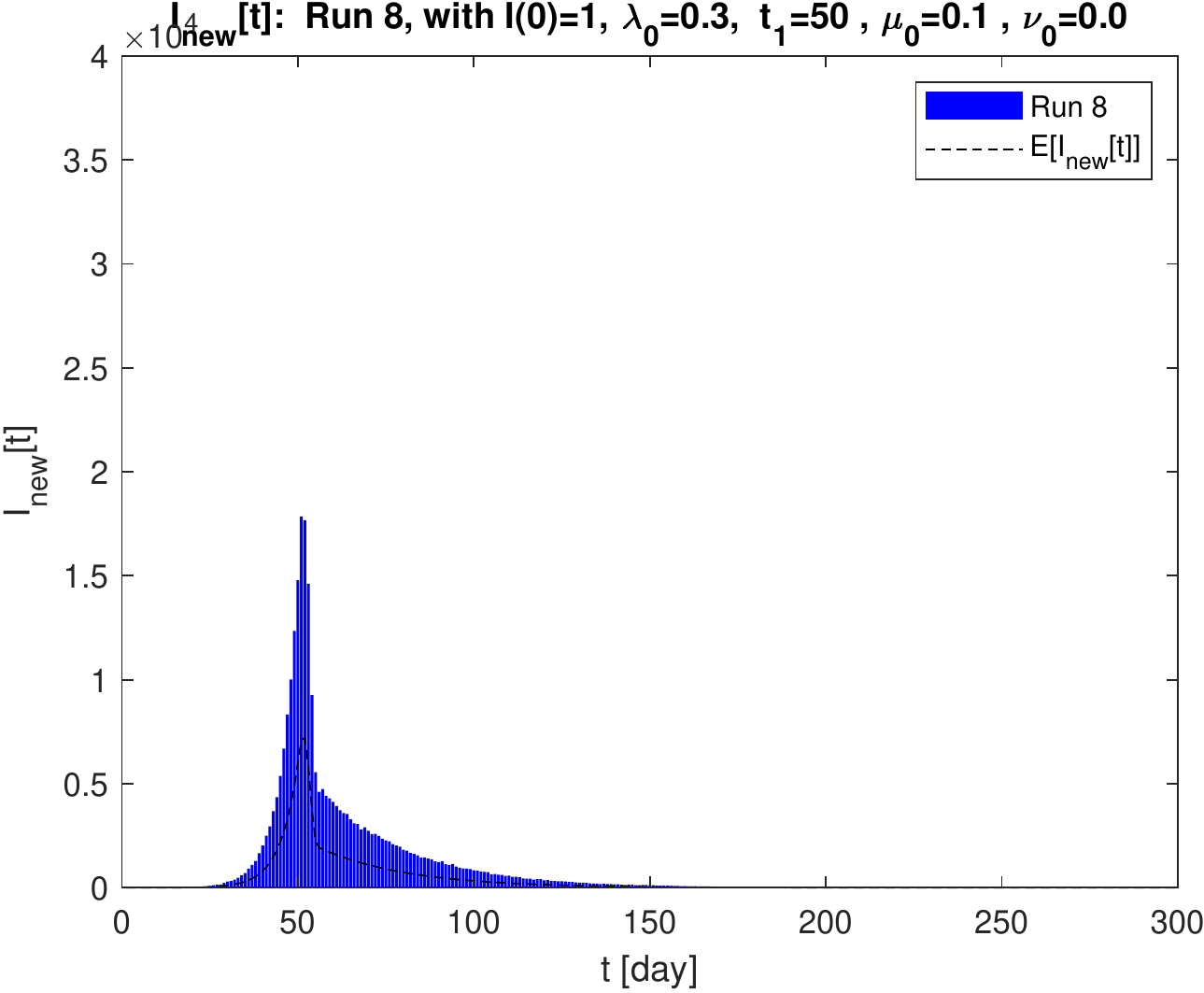}
\caption{\sf Run 8: Daily new infections $I_{new}[t]$ of BD process with $I_0=1$}
\label{fig:BD-I_new[t]_Run_8-I_0=1}
\end{minipage}
\end{figure}

Figure \ref{fig:BD-I(t)_Runs_1-7-I_0=1} \& \ref{fig:BD-I(t)_Runs_8-14-I_0=1} show 14 consecutive  simulation runs plotted in two figures. They represent 14 sample paths obtained in one execution of the simulation script which perform 14 runs, one after another. The seed of a random number sequence was not chosen by us. 
We set $B(0)=R(0)=0$, and $I(0)=I_0=0$ in each run. 

In Figure \ref{fig:BD-I(t)_Runs_1-7-I_0=1}, 
Run 5 terminates at $t=1.817$ [days], when the infected person present at $t=0$ recovers/removed or dies before he/she has a chance to infect another person.   
Run 3 (yellow) is the largest, Run 4 is the second largest, followed by Runs 1 6 7 and 2.

In Figure \ref{fig:BD-I(t)_Runs_8-14-I_0=1}, Runs 10 and 13 terminate at $t=2.82$ and 2.65 [days], respectively,  without infecting any, whereas in Run 12, one infection takes place, both die by $t==5.38$ [days]. Run 9 is by far the largest, and its peak $I(53.71)=236,848$, which is nearly 7 times as large as the expected value $\oI(53.66)-34,039$, predicted by the deterministic model.

Out of the 14 runs, 4 runs terminates soon after $t=0$; their proportion $4/14=28/57\%$ is somewhat lower than $\mu/\lambda=33.33\%$, the theoretical probability of extinction. 

Figures \ref{fig:BD-Semilog_I(t)_Runs_1-7-I_0=1} and \ref{fig:BD-Semilog_I(t)_Runs_8-14-I_0=1} are the plots of the same 14 runs in the semi-log scale.  Figure \ref{fig:BD-Semilog_I(t)_Runs_1-14-t_max=25-I_0=1} shows the first 25 days of all the 14 runs.  From these three figures we can confirm that the variability in each run and across different runs are indeed significant when $I(t)$ is less than 100, whereas each $\log I(t)$ moves in a nearly straight line in both increasing and decreasing phases when $I(t)>200$.  This is consistent with the CV (coefficient of variation) curve we obtained and plotted in Figure 15 of \cite{kobayashi:2021a}.

Each birth-death process may vary significantly in an unpredictable manner at any instant throughout the process, and their sum $I(t)$ is most visible in the initial and final phases, where the $I(t)$ is small, say less than 100.  But in the period when $I(t)$ exceeds some number, say, 200, then \emph{the law of large numbers} comes into a play, hence the aggregated of these independent births and deaths are more regular and predictable.

Figure \ref{fig:BD-I_new[t]_Runs_1-7-I_0=1} and \ref{fig:BD-I_new[t]_Runs_8-14-I_0=1} we show the daily statistics of the newly infected.  We defined this quantity in Section 1.3 of \cite{kobayashi:2021a}, and the expected value is plotted in Figure 11 (the red bar graph for the $d=5$ [days] case, which is replicated in dotted black curves here).  Note that the shape of the curve of 
$\oI_{new}[t]$ (see Eqn.(41) of \cite{kobayashi:2021a}) is appreciably different from $\oI(t)$ or $\oR_{new}[t]$ (see Eqn.(40), ibid), because of the multiplicative term $\lambda(u)$ in the integrand in (41), ibid.
Figure \ref{fig:BD-I_new[t]_Run_8-I_0=1} shows the bar graph of $\oI_{new}[t]$ of Run 8, as an example.

\subsection{The Processes $B_{BD:1}(t), R_{BD:1}(t)$ and $D_{BD:1}(t)$}
In this section we show the plots of the cumulative counts of internal infections $B(t)$ (see Figures \ref{fig:BD-B(t)_Runs_1-7-I_0=1}-\ref{fig:BD-Semilog-B(t)_Runs_1-14-t_max=25-I_0=1}), and the cumulative account of those who cease to be infectious any longer, either by recovering, or getting removed (to e.g., hospitals) or dead, denoted 
$R(t)$ (Figures \ref{fig:BD-D(t)_Runs_1-7-I_0=1} - \ref{fig:BD-Semilog-R(t)_Runs_1-14-I_0=1}).

We obtain the cumulative count of deaths $D(t)$ as a sub-process of $R(t)$. When an event of recovery/removal/death occurs in a simulation, we randomly choose and label it as a death. In our simulation we set the fatality rate  $r_f=0.02$ among the infected (see Figures \ref{fig:BD-D(t)_Runs_1-7-I_0=1}-\ref{fig:First_50_days_D(t)}).

\begin{figure}[thb]
\begin{minipage}[t]{0.30\textwidth}
\centering
\includegraphics[width=\textwidth]{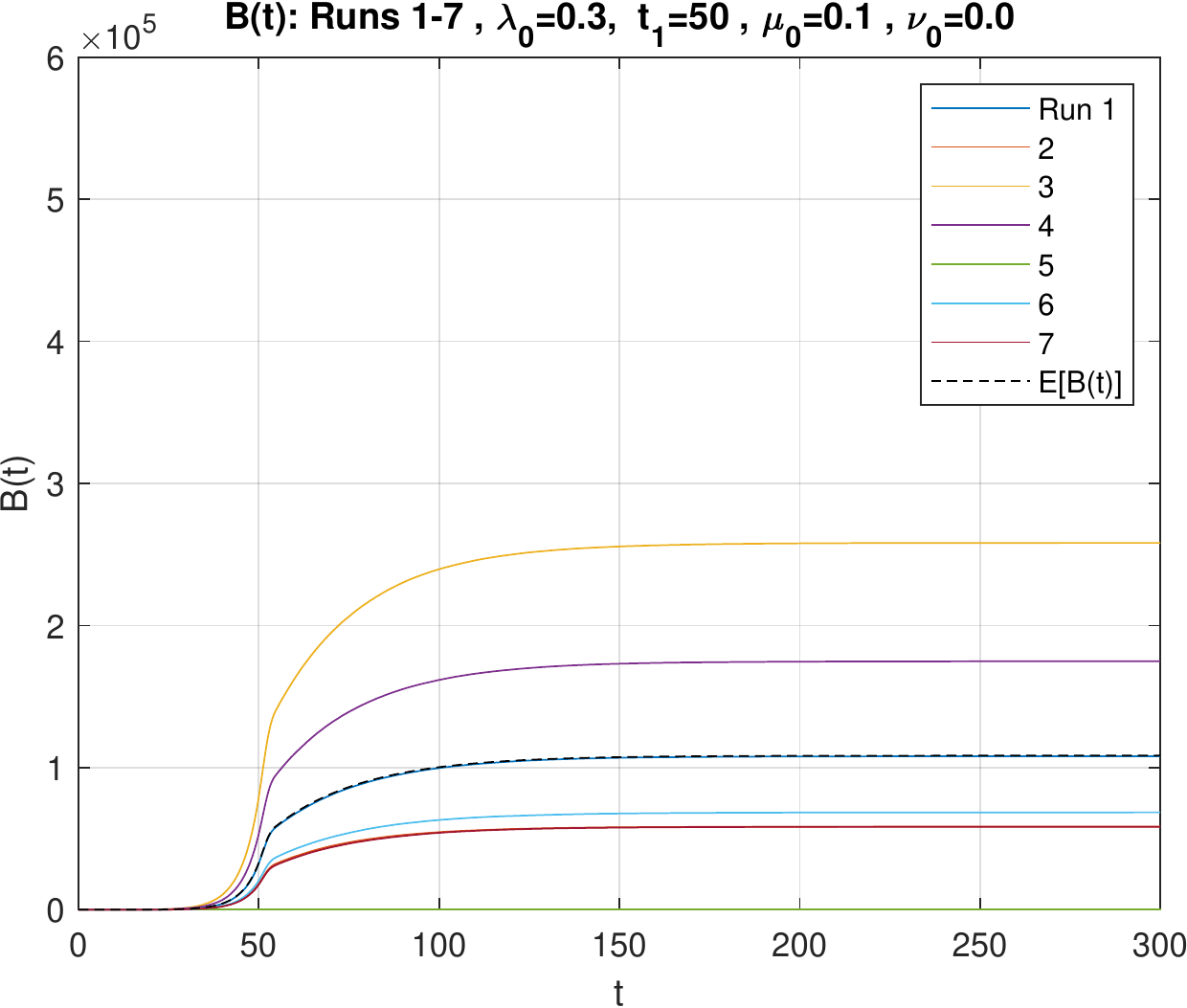}
\caption{\sf Runs 1-7: $B(t)$.}
\label{fig:BD-B(t)_Runs_1-7-I_0=1}
\end{minipage}
\hspace{0.5cm}
\begin{minipage}[t]{0.30\textwidth}
\centering
\includegraphics[width=\textwidth]{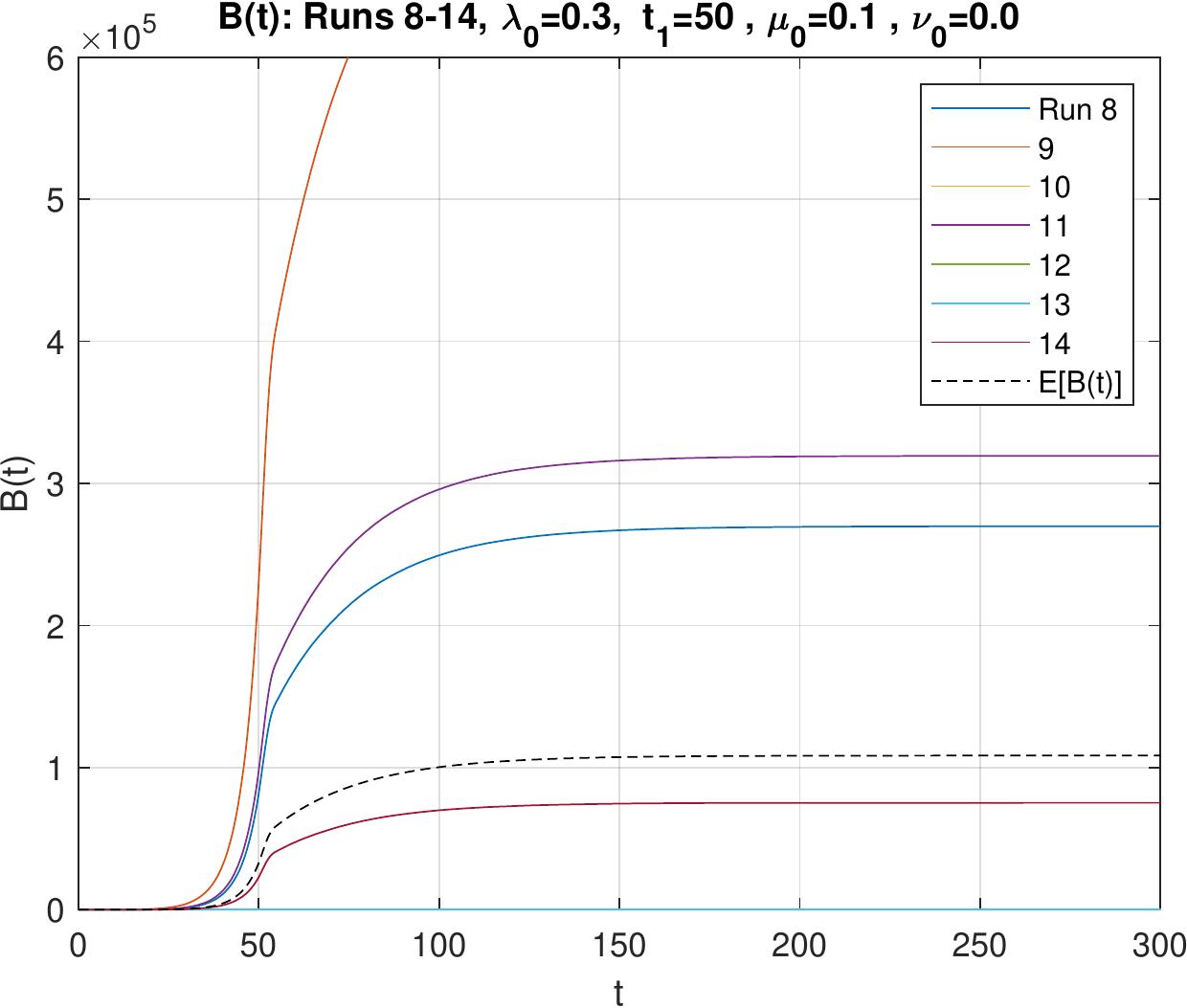}
\caption{\sf Runs 8-14: $B(t)$.}
\label{fig:BD-B(t)_Runs_8-14-I_0=1}
\end{minipage}
\hspace{0.5cm}
\begin{minipage}[t]{0.30\textwidth}
\centering
\includegraphics[width=\textwidth]{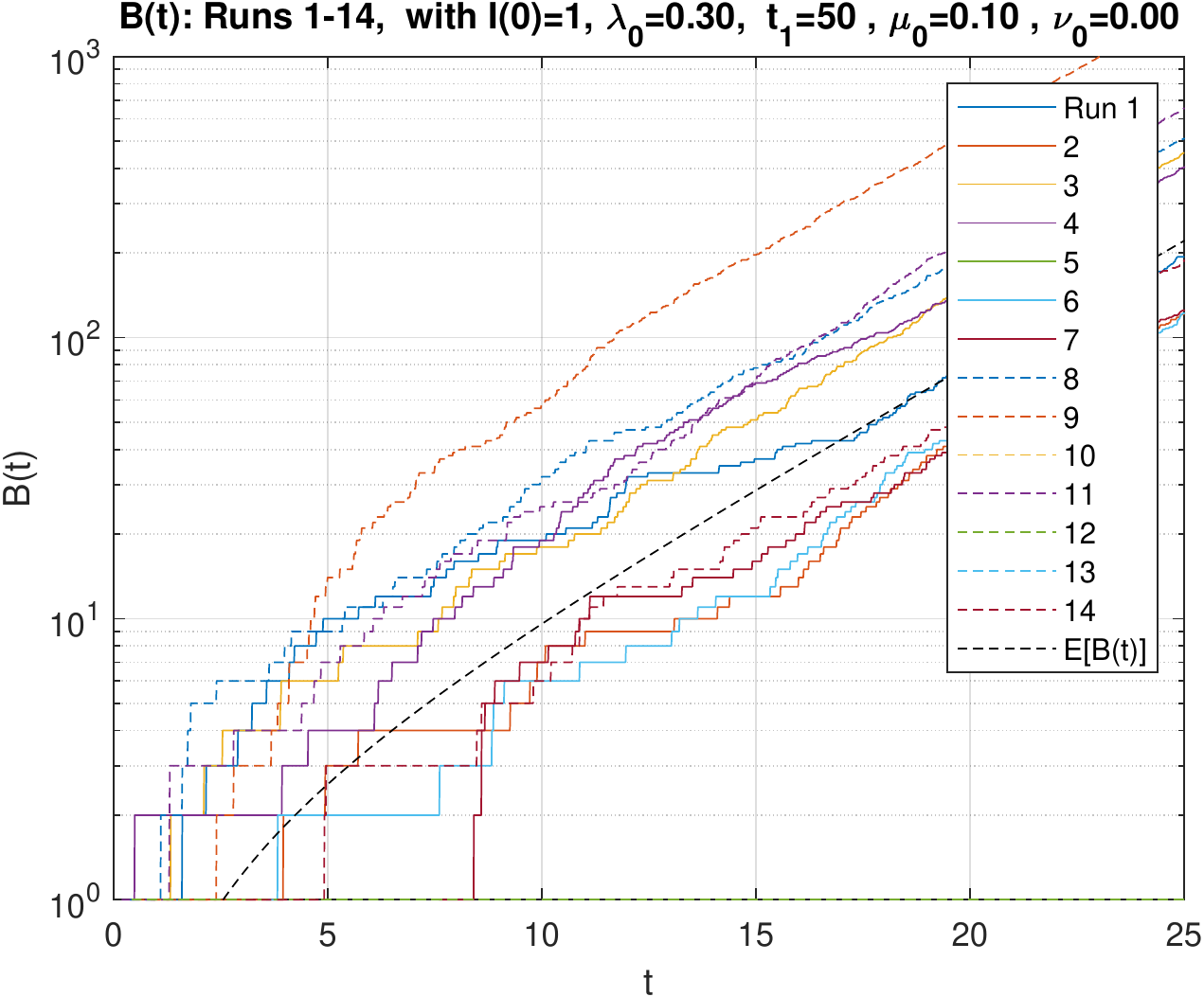}
\caption{\sf Runs 1-14: Initial 25 day, Semi-log plots of $B(t)$.}
\label{fig:BD-Semilog-B(t)_Runs_1-14-t_max=25-I_0=1}
\end{minipage}
\end{figure}
\begin{figure}[thb]
\begin{minipage}[t]{0.30\textwidth}
\centering
\includegraphics[width=\textwidth]{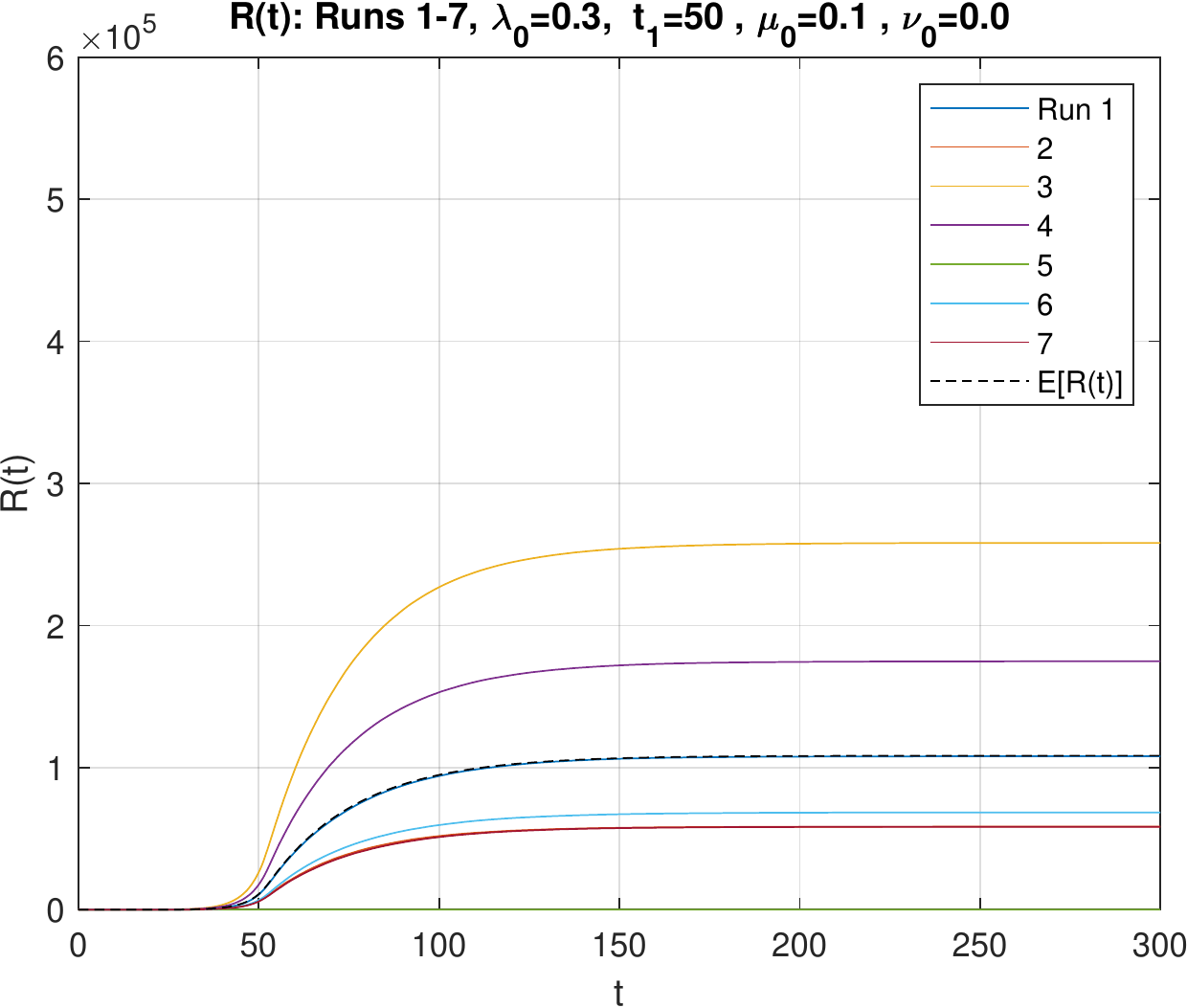}
\caption{\sf Runs 1-7: $R(t)$.}
\label{fig:BD-R(t)_Runs_1-7-I_0=1}
\end{minipage}
\hspace{0.5cm}
\begin{minipage}[t]{0.30\textwidth}
\centering
\includegraphics[width=\textwidth]{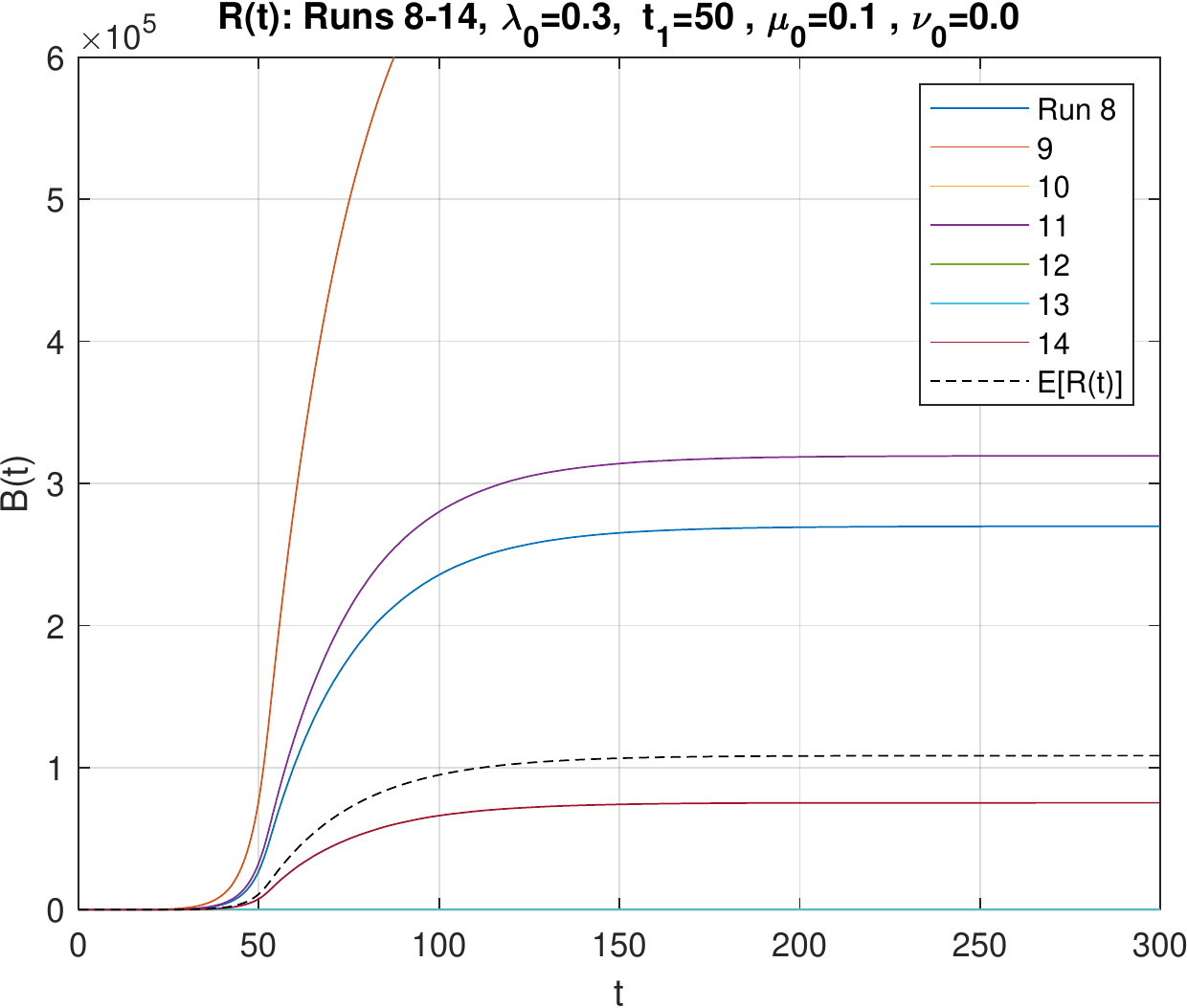}
\caption{\sf Runs 8-14: $R(t)$.}
\label{fig:BD-R(t)_Runs_8-14-I_0=1}
\end{minipage}
\hspace{0.5cm}
\begin{minipage}[t]{0.30\textwidth}
\centering
\includegraphics[width=\textwidth]{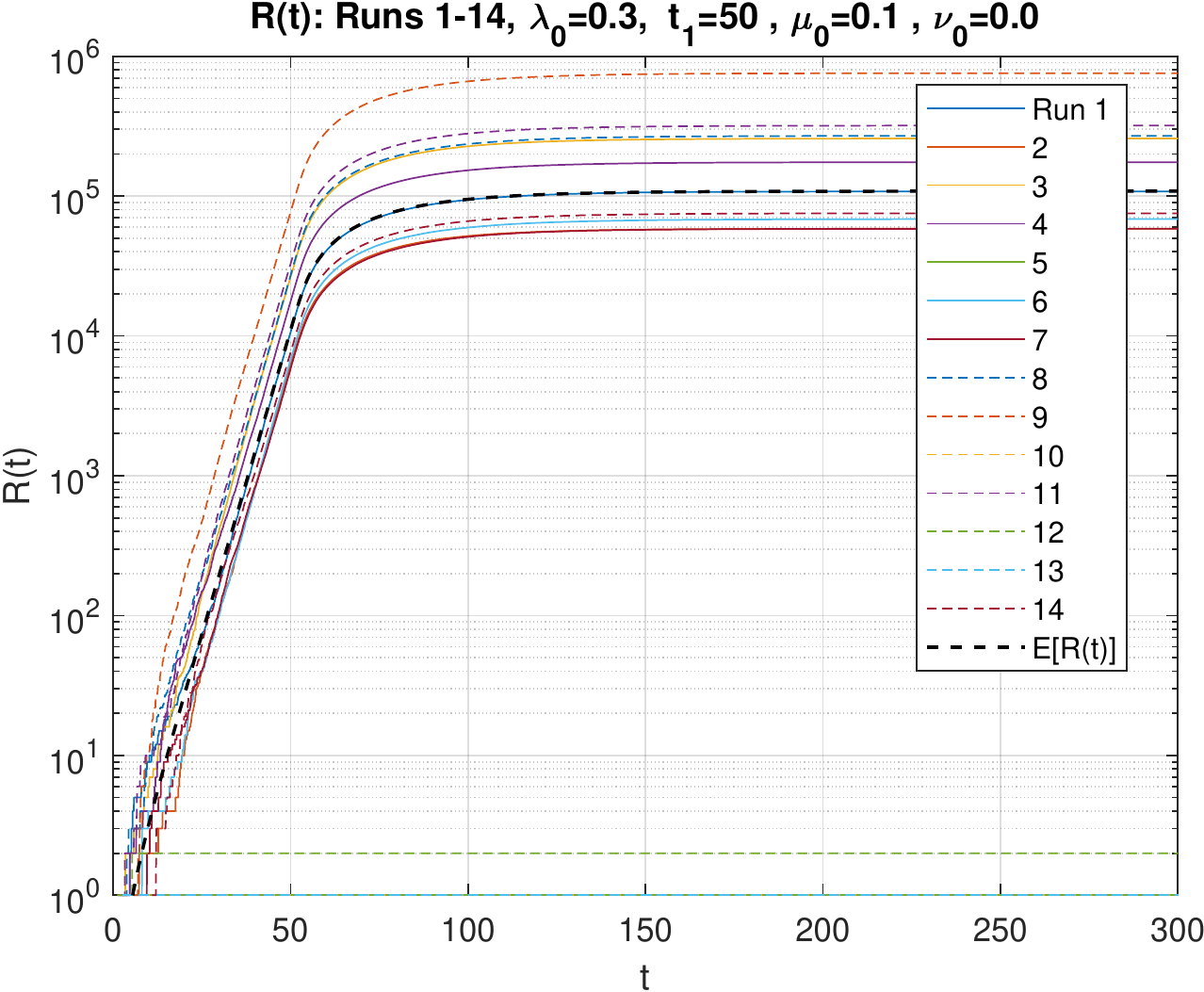}
\caption{\sf Runs 1-14: Semi-log plot of $R(t)$.}
\label{fig:BD-Semilog-R(t)_Runs_1-14-I_0=1}
\end{minipage}
\end{figure}
\begin{figure}[thb]
\begin{minipage}[t]{0.30\textwidth}
\centering
\includegraphics[width=\textwidth]{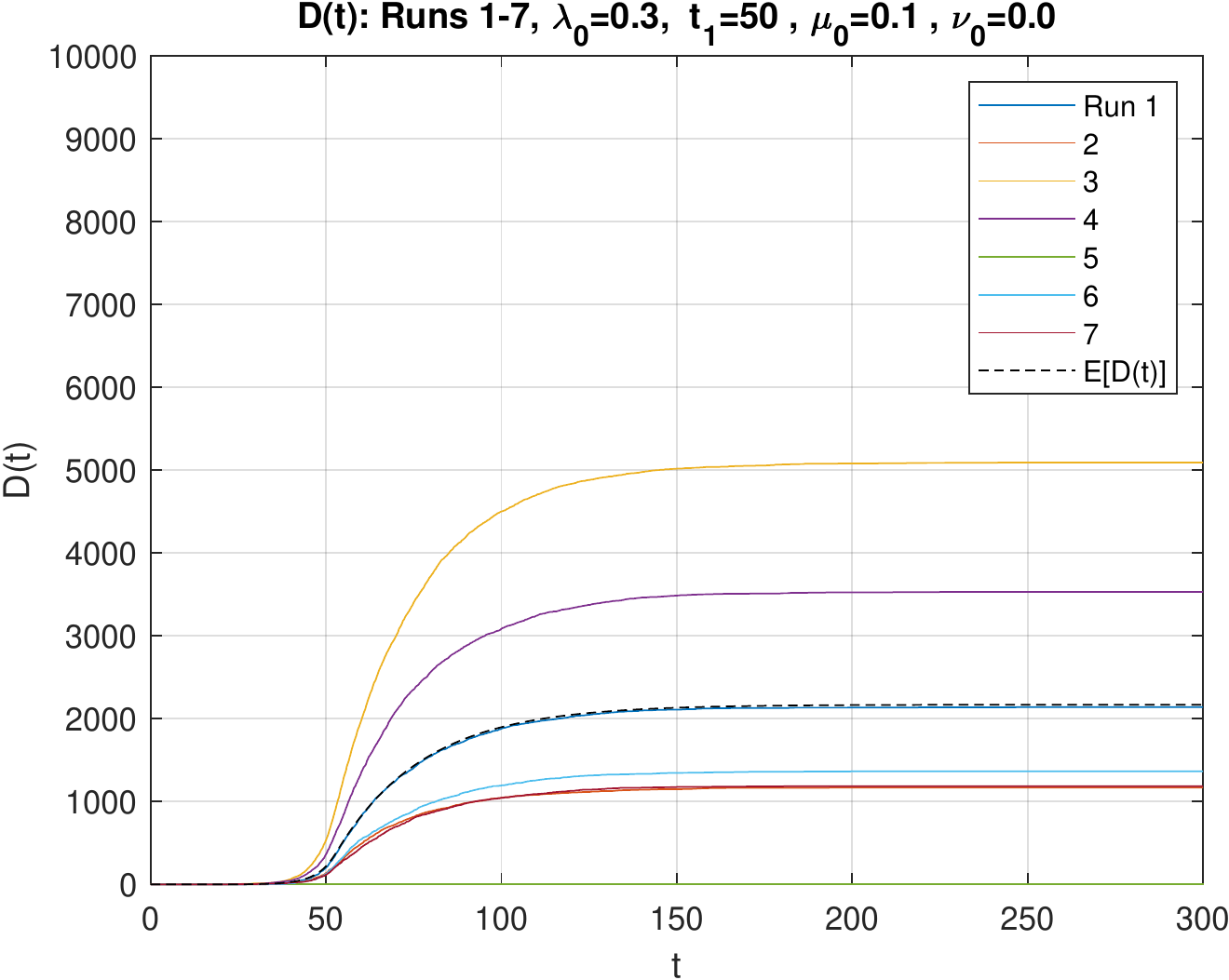}
\caption{\sf Runs 1-7: $D(t)$, the cumulative number of deaths.}
\label{fig:BD-D(t)_Runs_1-7-I_0=1}
\end{minipage}
\hspace{0.5cm}
\begin{minipage}[t]{0.30\textwidth}
\centering
\includegraphics[width=\textwidth]{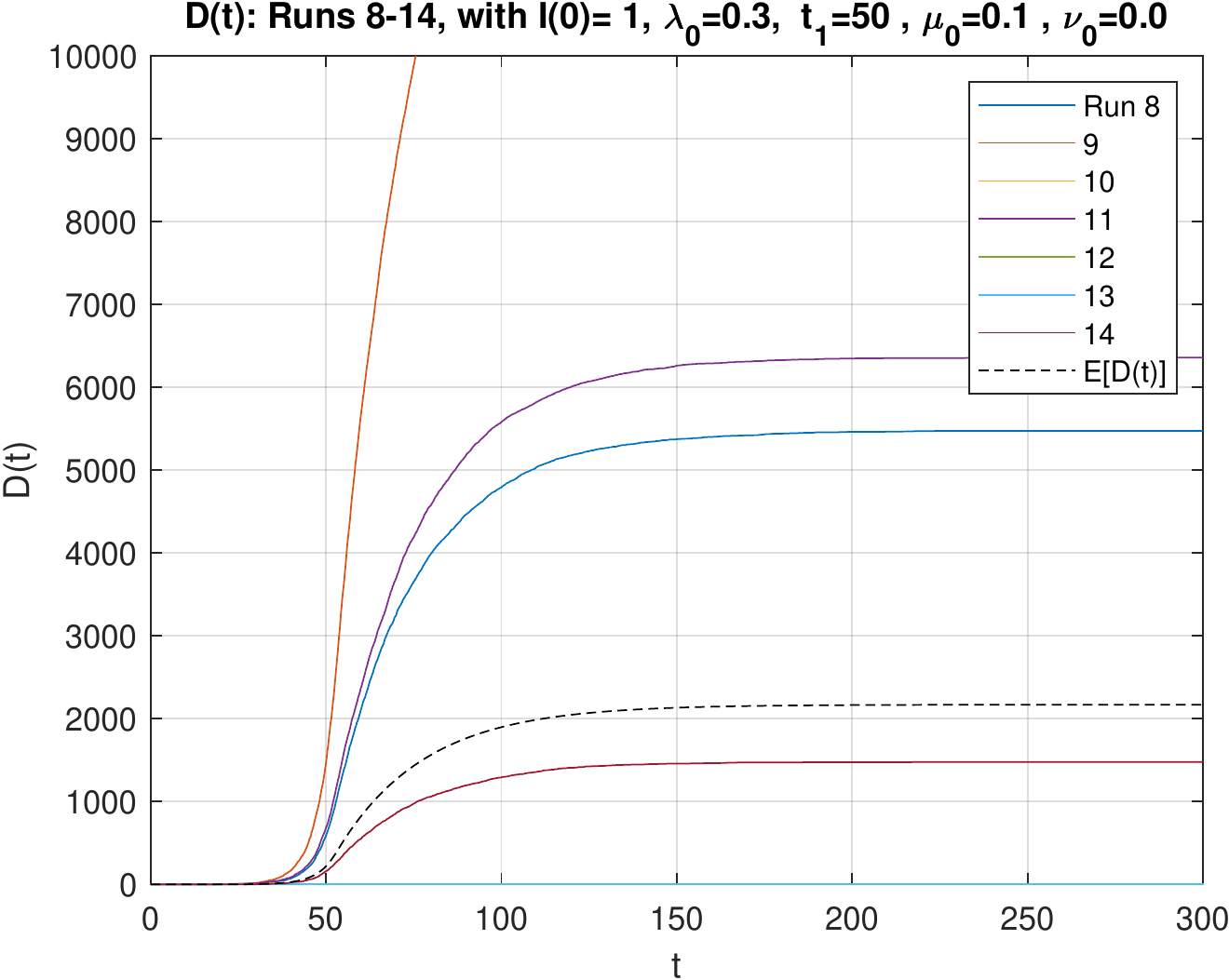}
\caption{\sf Runs 8-14: $D(t)$.}
\label{fig:BD-D(t)_Runs_8-14-I_0=1}
\end{minipage}
\hspace{0.5cm}
\begin{minipage}[t]{0.30\textwidth}
\centering
\includegraphics[width=\textwidth]{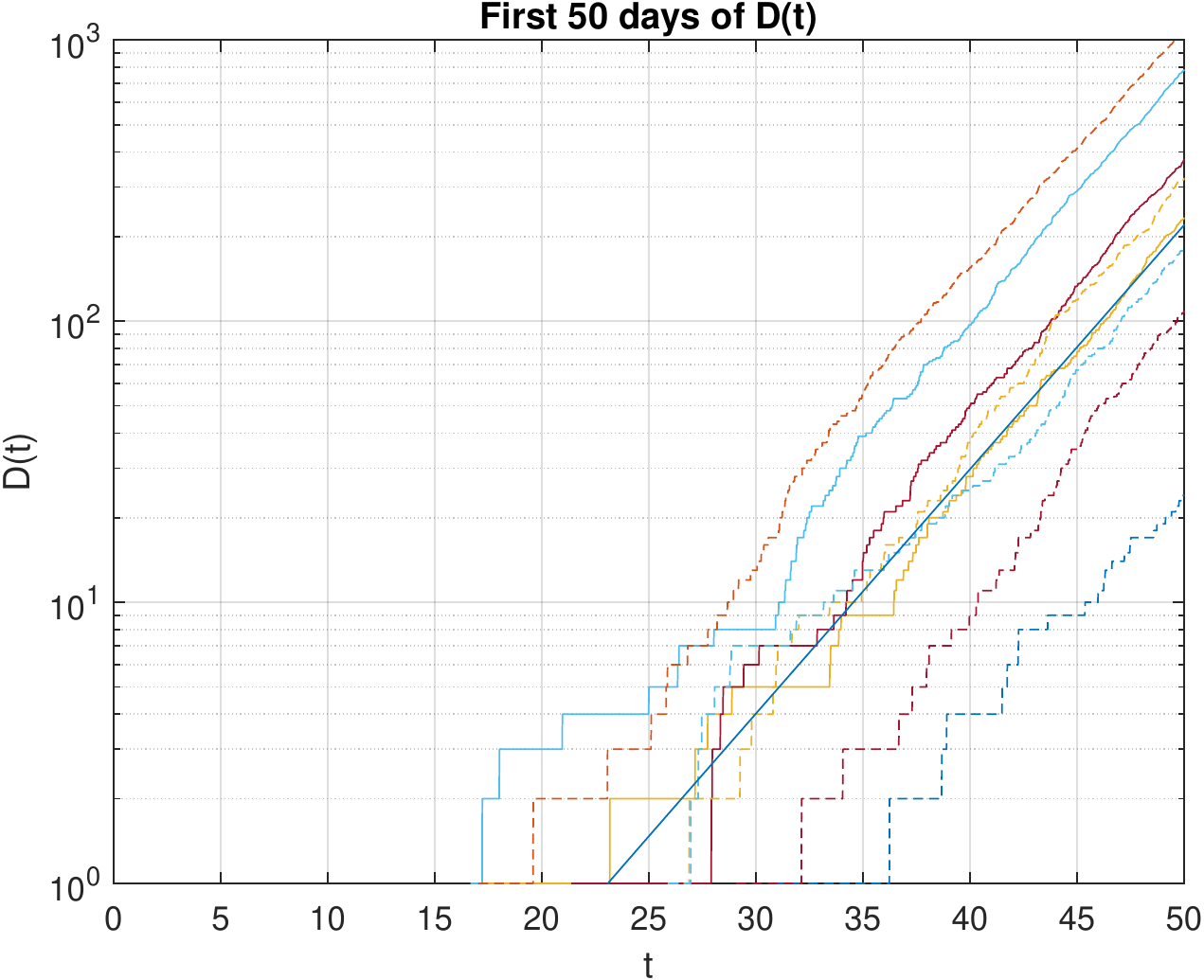}
\caption{\sf First 50 days of $D(t)$, Runs 1-14.}
\label{fig:First_50_days_D(t)}
\end{minipage}
\end{figure}

\clearpage

\section{Simulating a Time-Nonhomogeneous BDI Process\\
with \boldmath{$I_0=0$}, and \boldmath{$\nu(t)=r\lambda(t)$}}\label{sec:nonhomo-BDI-simulation}

We now report on simulation experiments of a time-varying BDI process.
The BDI process with the initial population $I_0$, $I_{BDI:I_0}(t)$, can be decomposed into  $I_{BD:I_0}(t)$ and $I_{ID:0}(t)$ as given in (\ref{BDI-two-processes}) of Proposition \ref{prop:nonhomo-BDI}.  The first component was studied in the previous section \footnote{A full analysis and simulation of a BD process with $I_0>1$ will be deferred to \cite{kobayashi:2021bb}.}, so we set $I_0=0$ here to focus on the behavior of $I_{BDI:0}(t)=I_{ID:0}(t)$.  

As Corollary \ref{coro:nonhomo-BDI-NBD} states, if we choose the immigrants' arrival rate $\nu(t)=r \lambda(t)$ with some positive real number $r$, then the PGF reduces to that of an NBD (negative binomial distributed) process with parameters $(r, \beta(t))$:
\begin{align}
G_{BDI:0}(z,t)= G_{ID:0}(z,t)=\left(\frac{1-\beta(t)}{1-\beta(t)z}\right)^r. \label{PGF-BDI:0}
\end{align}

We adopt the same $\lambda(t)$ and $\mu(t)$ as assumed in the numerical analysis in Section 1, and in the preceding section on simulating the BD process.  We consider again the case of $d=5$ [days].  The immigrants' arrival rate $\nu(t)=r\lambda(t)$ is assumed, with $r=\frac{\nu_0}{\lambda_0}=\frac{2}{3}$ (see Figure \ref{fig:nu(t)-raised-cosine} and \ref{fig:Detailed-nu_t-bent-at-t=50}.).

We shall discuss cases $r(t)\neq r$ in a subsequent report \cite{kobayashi:2021bb}, together with a comprehensive analysis.

\begin{figure}[thb]
\begin{minipage}[t]{0.45\textwidth}
\centering
\includegraphics[width=\textwidth]{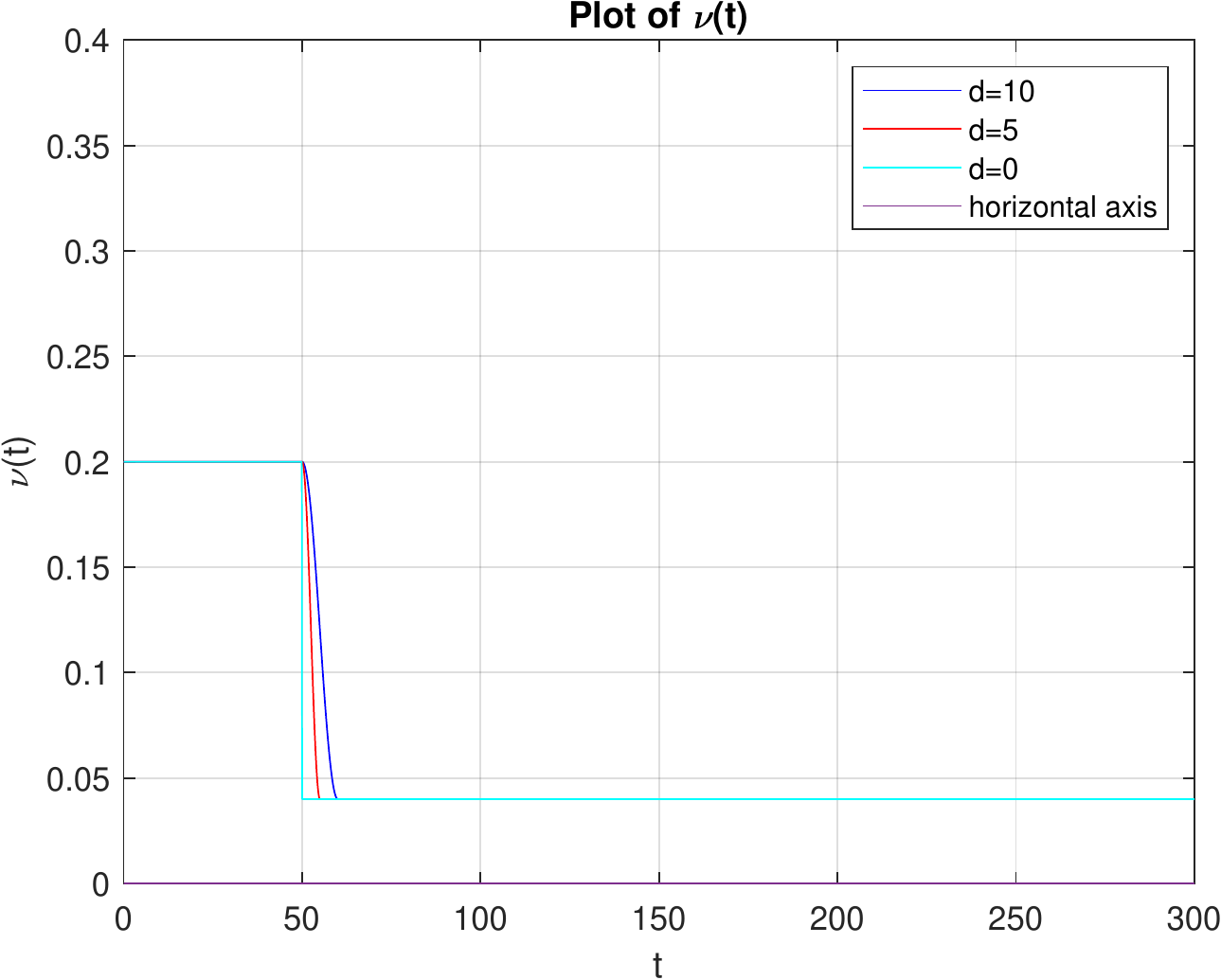}
\caption{\sf The function $\nu(t)$ makes a transition from $\nu_0=0.2$ down to $\nu_1=0.04$ over the interval
 $[50, 55)$ (i.e.,the red curve with $d=5$ [days] is adopted in the simulation.)}
\label{fig:nu(t)-raised-cosine}
\end{minipage} 
\qquad
\begin{minipage}[t]{0.45\textwidth}
\centering
\includegraphics[width=\textwidth]{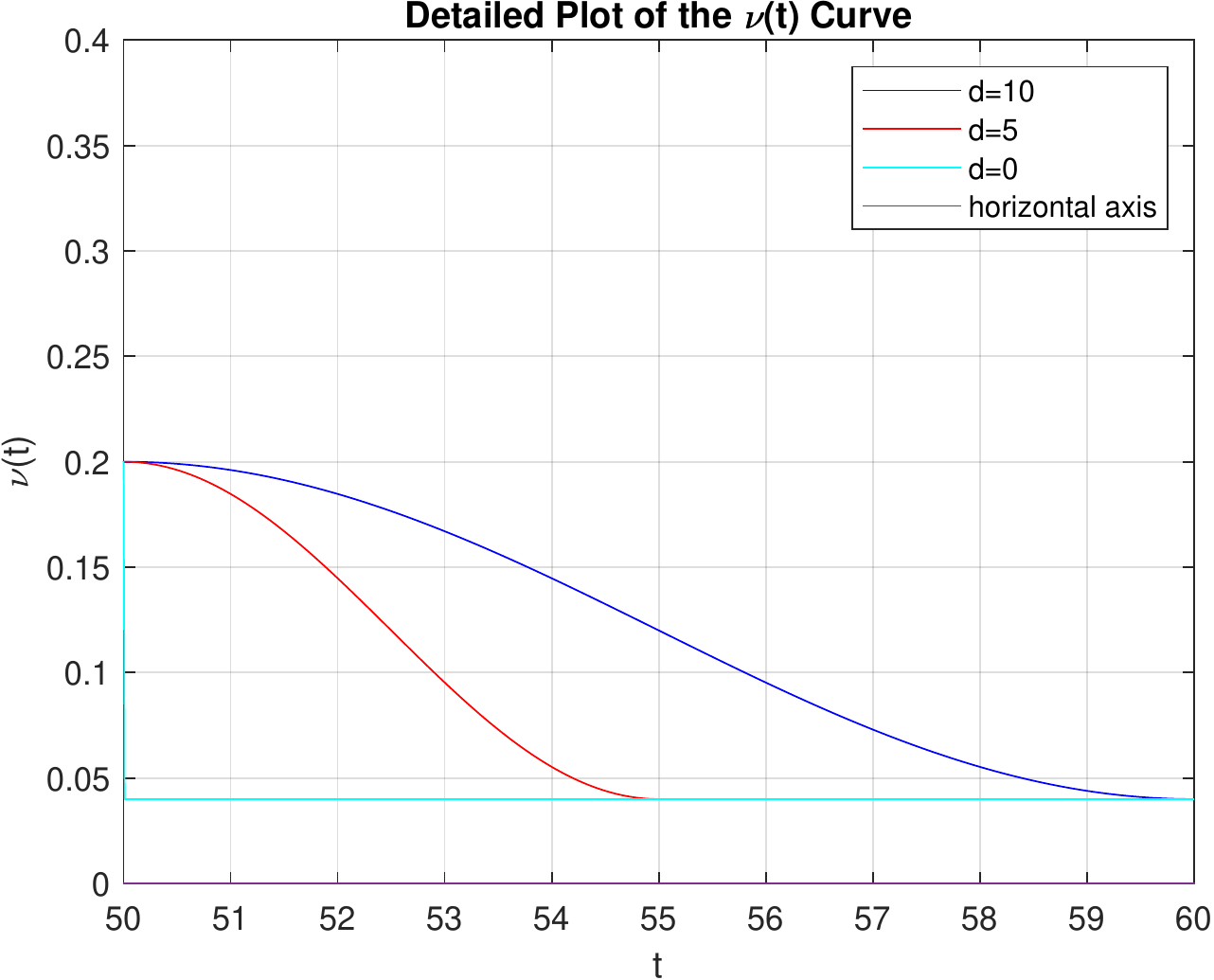}
\caption{\sf An expanded view of the transition, where the function $\nu(t)$ takes a smooth curve represented one half cycle of a cosine function, raised up properly .} 
\label{fig:Detailed-nu_t-bent-at-t=50}
\end{minipage}
\end{figure}

\subsection{The Processes $A(t)$, and $I_{BDI:0}(t)$} \label{subsec:I_BDI:0}
We provide below major findings of the simulation experiments of the $I_{BDI:0}(t)$ process.

\begin{enumerate}
\item  Since we assume zero population at $t=0$, i.e., $I_0=0$, all those who are present at time  $t$ are either the immigrants who have arrived prior to $t$ or their descendants who were born and alive. By comparing  Figures \ref{fig:A(t)_Runs_1-7} and \ref{fig:A(t)_Runs_8-14}, we see considerable differences in the arrival pattern of the first 7 runs and the second 7 runs.
Five out of the first 7 runs have their arrivals at faster pace than the average rate $\oA(t)$, whereas only three runs in the second group have their arrivals at faster rates  than  $\oA(t)$.  

\item  By looking at Figures \ref{fig:BDI_0(t)_Runs_1-7} and \ref{fig:BDI_0(t)_Runs_8-14}, we notice considerable differences in the behaviors of the process $I(t)$ between the first 7 and second 7 runs. In the first group, only Run 5 (shown in green) exceeds the $\oI(t)$, whereas in the second group both Run 10 (yellow) and Run 9 (blue) far exceed $\oI(t)$,  whereas Run 14 (dark red) and Run 13 (light blue) are hardly visible in Figure \ref{fig:BDI_0(t)_Runs_1-7}.  In the semi-log plots of Figure \ref{fig:Semilog_BDI_0(t)_Runs_8-14}, however, we clearly see these runs. As we already remarked in Part I \& II, the NBD with $r=\frac{\nu}{\lambda}<1$ exhibits a wider spread than the Poisson or other distributions we normally encounter.\footnote{In the running example, the first 50 days of this time-varying BDI process is, statistically speaking, exactly the same as the time-homogeneous case we studied in Parts I \& and II.}

\item At first glance, there does not seem to exist a significant relation between the $A(t)$ process and $I(t)$ in these simulation runs.  Run 5 (green) ranks at the bottom in Figure \ref{fig:A(t)_Runs_1-7}, whereas in terms of the $I(t)$ this run ranks nearly at the top: see Figures \ref{fig:BDI_0(t)_Runs_1-7}, \ref{fig:Semilog_BDI_0(t)_Runs_1-7} and \ref{fig:Semilog_BDI_0(t)_Runs_1-14}. By examining carefully Figure \ref{fig:BD-Semilog_BDI_0(t)_Runs_1-14_25-days} and reexamining Figure \ref{fig:A(t)_Runs_1-7}, however, we notice that Run 5 climbs up immediately after $t=0$.  There are a few quick arrivals and births and few deaths in the initial period, all of which helped this process grow steadily fast.  Run 9 (yellow) in the second group also exhibits a fast build-up in the initial period with quick arrivals and births, and few deaths deaths in the ten days, as you can see in Figure 
\ref{fig:BD-Semilog_BDI_0(t)_Runs_1-14_25-days}.

\item An opposite example is Run 1 (blue) that has the largest cumulative arrivals after around $t=50$. However, it is near the bottom in terms of $I(t)$.  By closely examining Figure \ref{fig:BD-Semilog_BDI_0(t)_Runs_1-14_25-days}, Run 1 does not have many arrivals or births in the initial period, seems plagued by deaths. It is not until 18th day that this process begins to grow beyond two.  A slow-start certainly hurts in building up the population.

\item Once the process $I(t)$ grows beyond about $1\sim 2$ hundreds, the law of large number seems to set in, making the future path of the process more predictable.  The same seems to apply when the $I(t)$ decreases when $a(t)=\lambda(t)-\mu(t) <0$.  The declining behavior seems predictable until $I(t)$ decreases less than $1\sim 2$ hundred
level.  In terms of $s(t)=\int_0^t a(u)\,du$, these numbers approximately translate to $s(t)\sim 4.6-5.2$ from the approximate formula $\oI(t)=\frac{\nu_0}{\lambda_0-\mu_0)}e^{s(t)}$.  For more on this rule of thumb, refer to the discussion in the next section.

\item In Figures \ref{fig:BDI-I_new[t]-Runs_1-7}, \ref{fig:BDI-I_new[t]-Runs_8-14} and \ref{fig:BDI-I_new[t]-Run_5}, present simulation results of familiar statistics, i.e, the daily statistics of newly infected from day 0 to day $t$.
The curves agree with the shape shown in the analysis as presented in Figure \ref{fig:New-daily-infections} (the red curve corresponds to $d=5$).  As we discussed concerning the similar curve in the BD process presented in Figure 11 of Section 2.2 in \cite{kobayashi:2021a} there is a sharp drop on the RHS, because this function is proportional to 
$\lambda(t)I(t)$ (see (41), ibid.).  Although the simulation results confirm the shape given by the analysis, the present author has not seen such sharp drops reported in the news reported in the Covid-19 epidemics. A plausible explanation is that the dormant period from the moment when an infection takes place until any symptom should appear (or a PCR-test showing positive) varies case by case, a decrease in the number of new infections may not be as dramatic as we might expect.  
\end{enumerate}

\begin{figure}[thb]
\begin{minipage}[t]{0.50\textwidth}
\centering
\includegraphics[width=\textwidth]{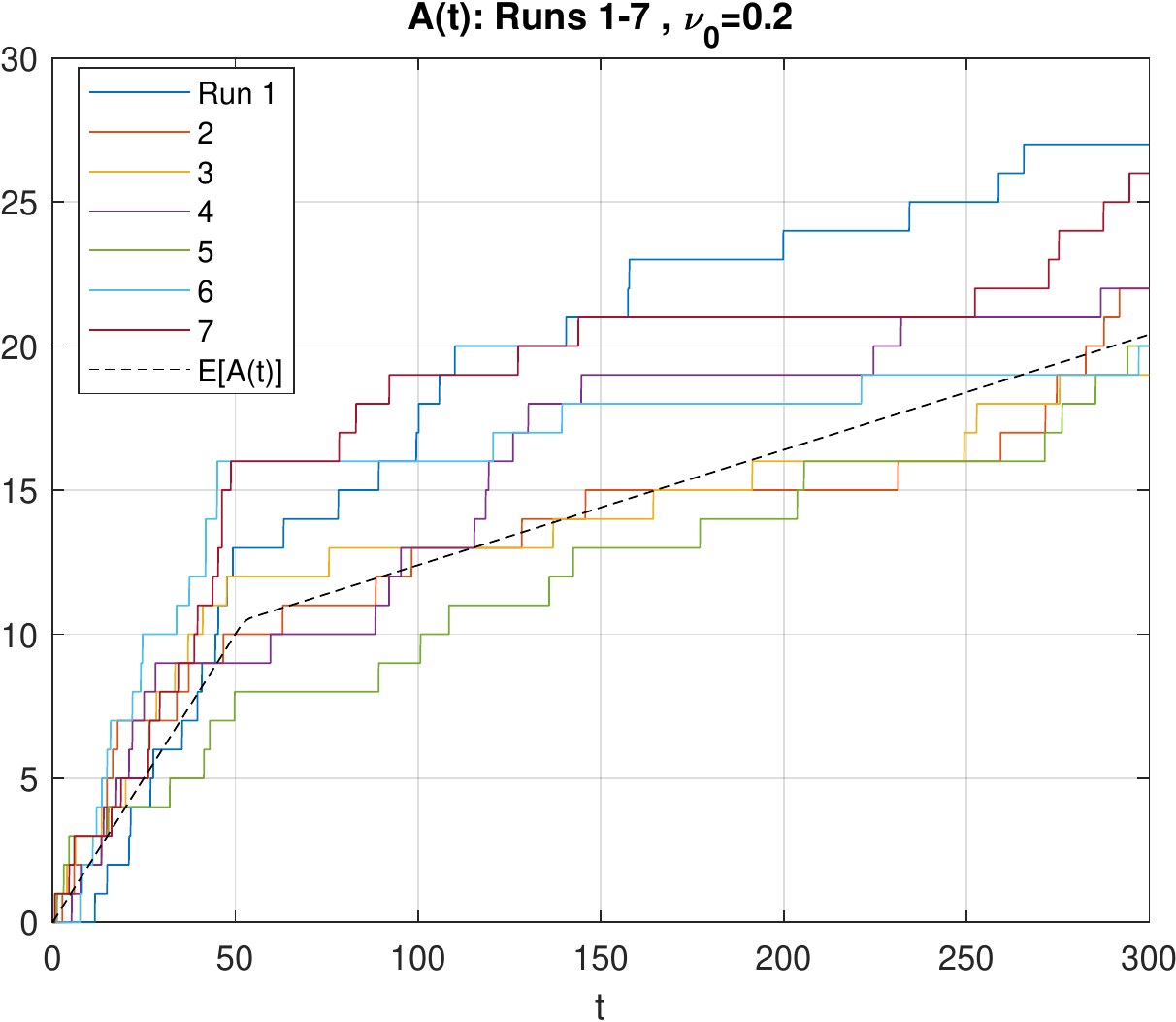}
\caption{\sf The cumulative count of arrivals $A(t)$, Runs 1-7.}
\label{fig:A(t)_Runs_1-7}
\end{minipage}
\hspace{0.5cm}
\begin{minipage}[t]{0.50\textwidth}
\centering
\includegraphics[width=\textwidth]{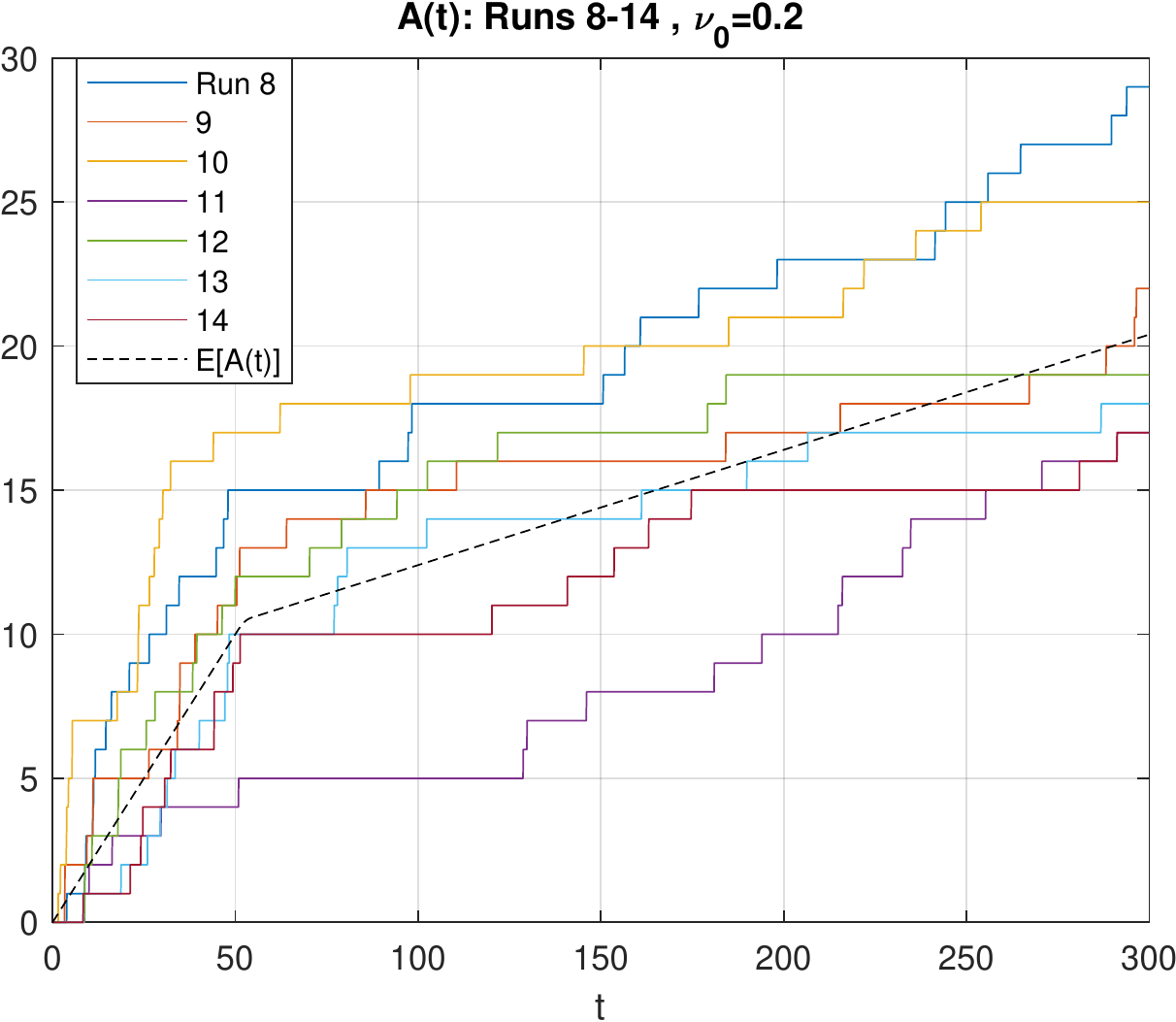}
\caption{\sf  The cumulative count of arrivals $A(t)$, Runs 8-14.}
\label{fig:A(t)_Runs_8-14}
\end{minipage}
\end{figure}
\begin{figure}[thb]
\begin{minipage}[t]{0.50\textwidth}
\centering
\includegraphics[width=\textwidth]{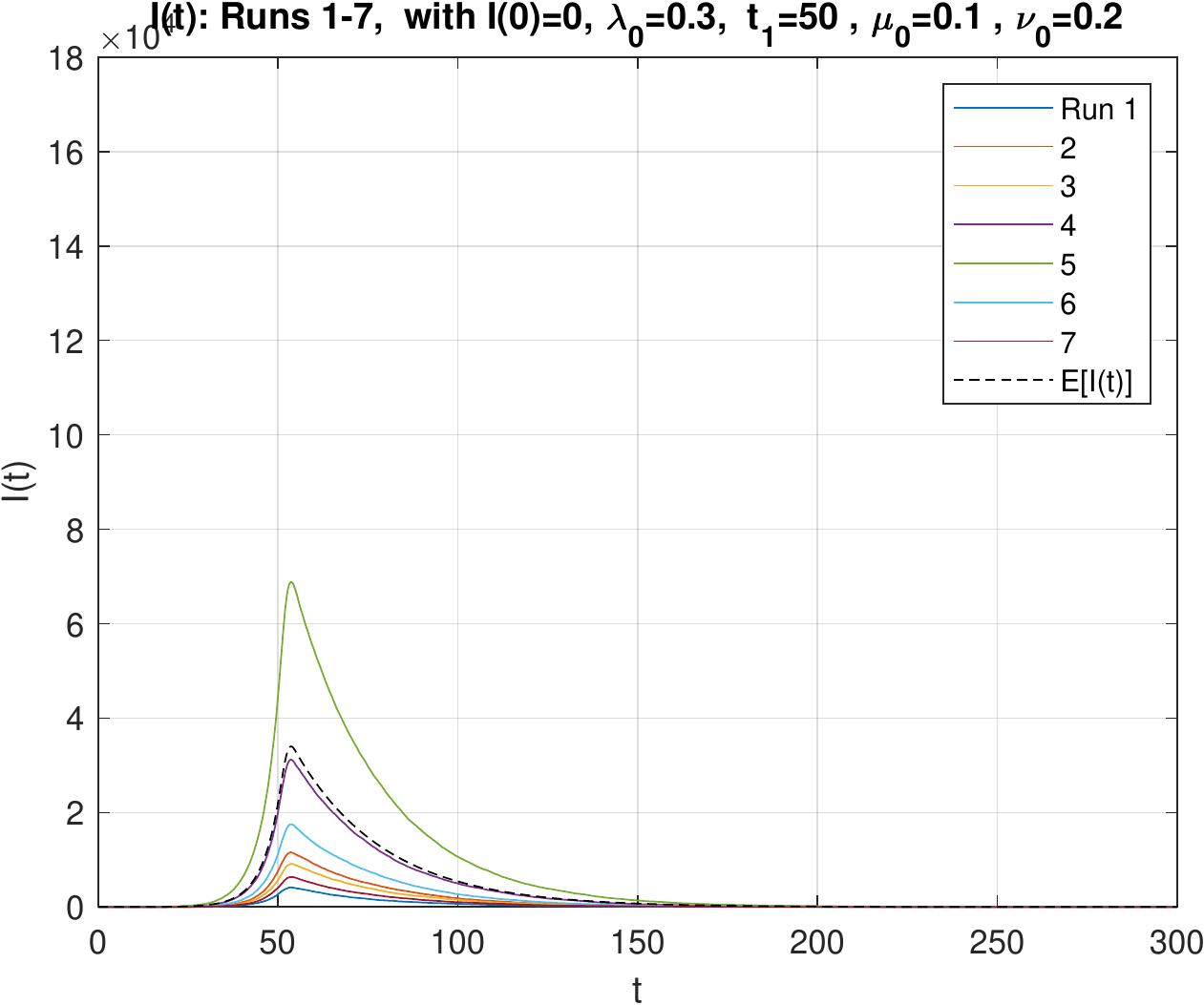}
\caption{\sf  The $I(t)$ process with $I_0=0$, Runs 1-7.}
\label{fig:BDI_0(t)_Runs_1-7}
\end{minipage}
\hspace{0.5cm}
\begin{minipage}[t]{0.50\textwidth}
\centering
\includegraphics[width=\textwidth]{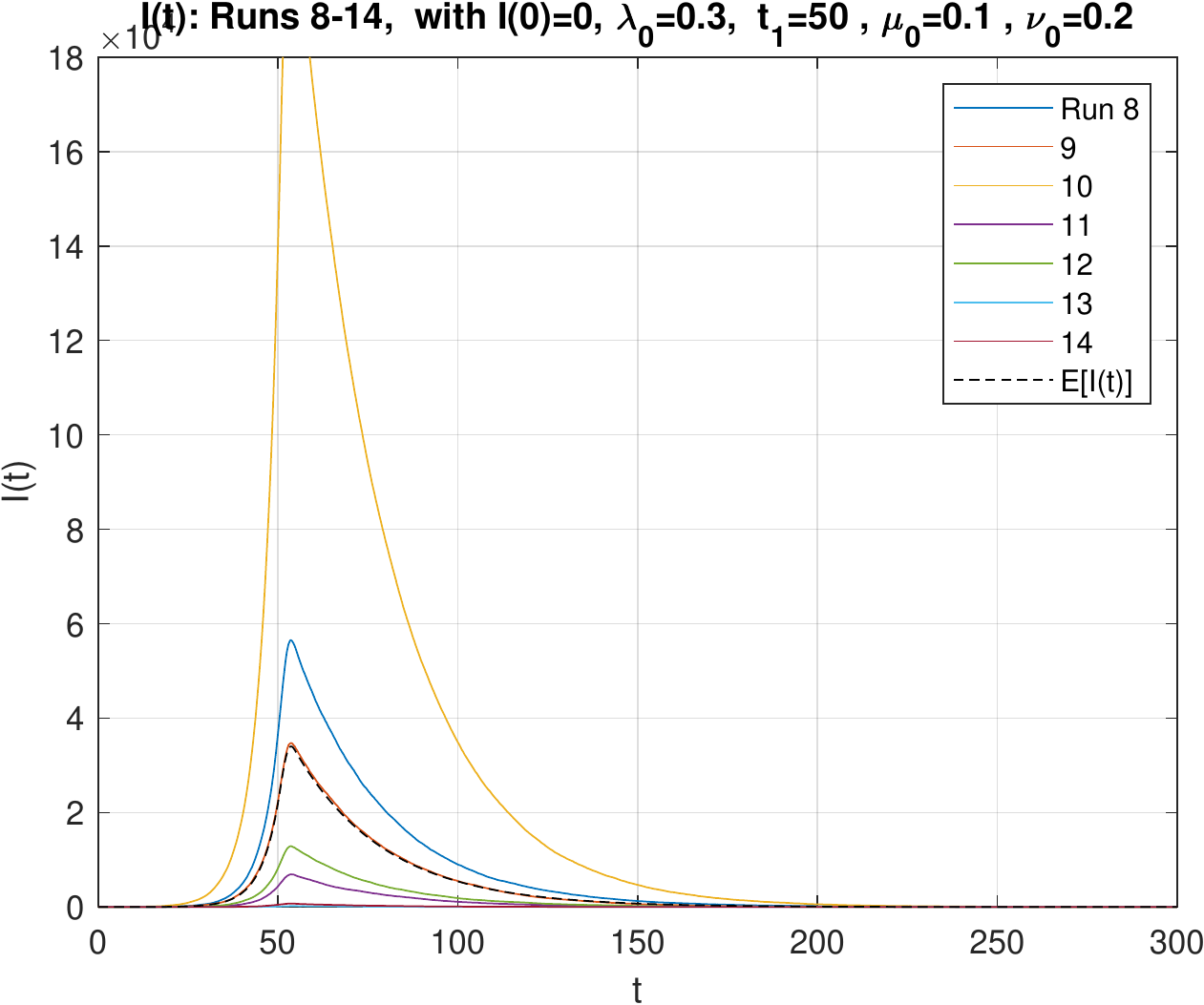}
\caption{\sf  The $I(t)$  process with $I_0=0$, Runs 8-14.}
\label{fig:BDI_0(t)_Runs_8-14}
\end{minipage}
\end{figure}
\begin{figure}[hbt]
\begin{minipage}[h]{0.50\textwidth}
\centering
\includegraphics[width=\textwidth]{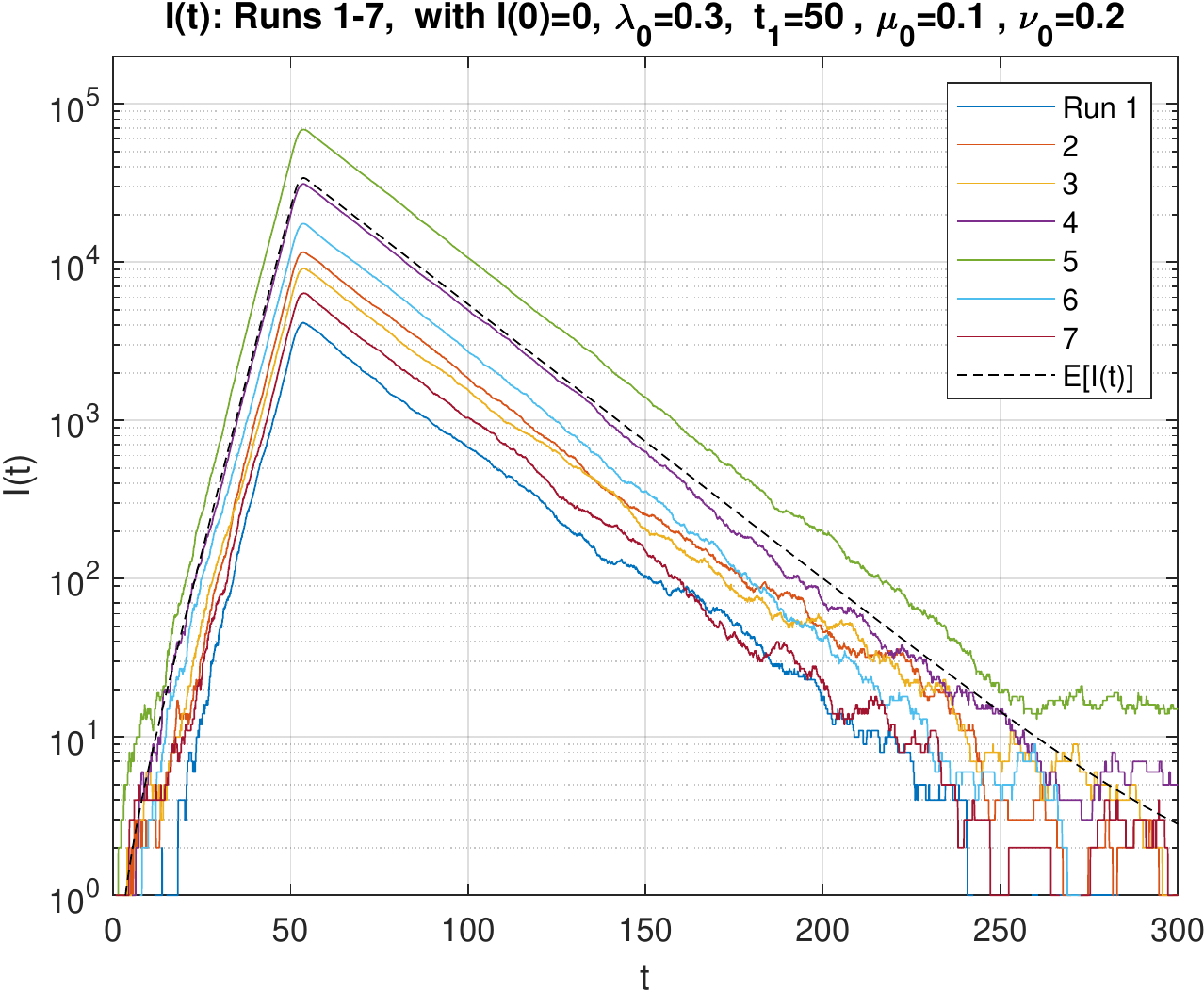}
\caption{\sf Semi-log plot of the $I(t)$ process, Runs 1-7.}
\label{fig:Semilog_BDI_0(t)_Runs_1-7}
\end{minipage}
\hspace{0.5cm}
\begin{minipage}[h]{0.50\textwidth}
\centering
\includegraphics[width=\textwidth]{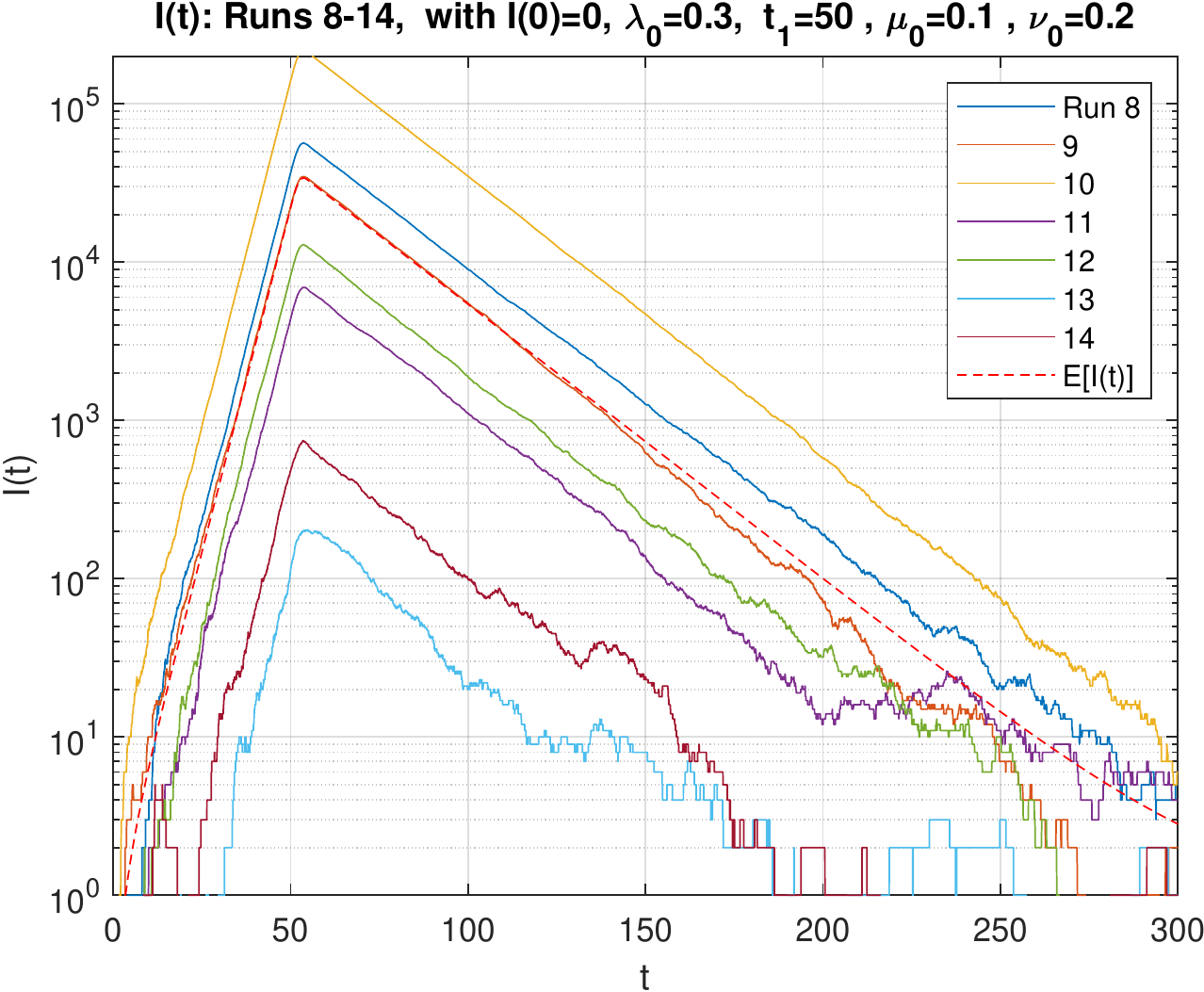}
\caption{\sf Semi-log plot of the $I(t)$ process, Runs 8-14.}
\label{fig:Semilog_BDI_0(t)_Runs_8-14}
\end{minipage}
\hspace{0.5cm}
\end{figure}
\begin{figure}[thb]
\begin{minipage}[t]{0.54\textwidth}
\centering
\includegraphics[width=\textwidth]{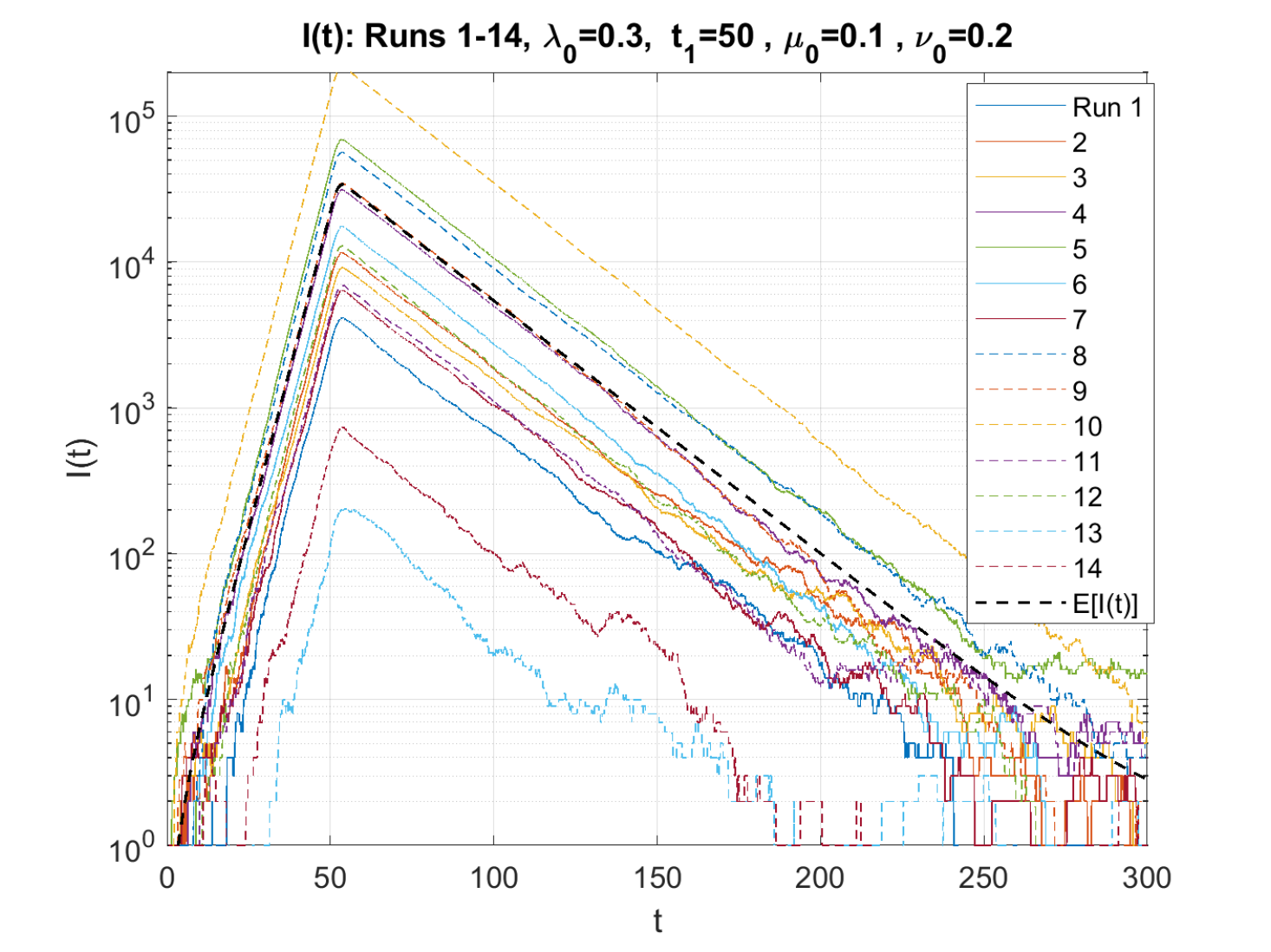}
\caption{\sf Semi-log plot of the $I(t)$ process, Runs 1-14.}
\label{fig:Semilog_BDI_0(t)_Runs_1-14}
\end{minipage}
\hspace{0.5cm}
\begin{minipage}[t]{0.50\textwidth}
\centering
\includegraphics[width=\textwidth]{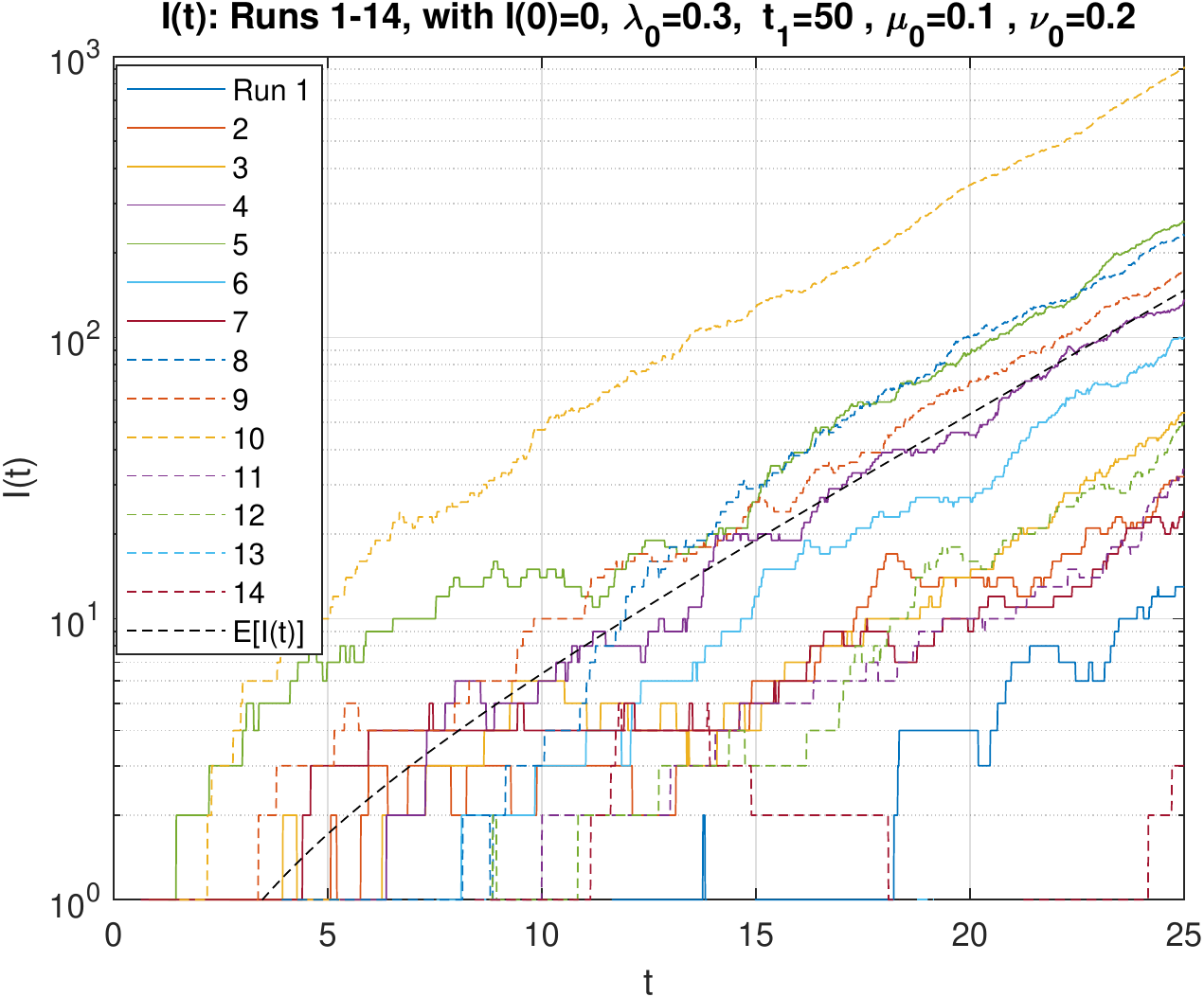}
\caption{\sf Initial 25 days of the $I(t)$ process, Runs 1-14.}
\label{fig:BD-Semilog_BDI_0(t)_Runs_1-14_25-days}
\end{minipage}
\end{figure}

\begin{figure}[thb]
\begin{minipage}[t]{0.50\textwidth}
\centering
\includegraphics[width=\textwidth]{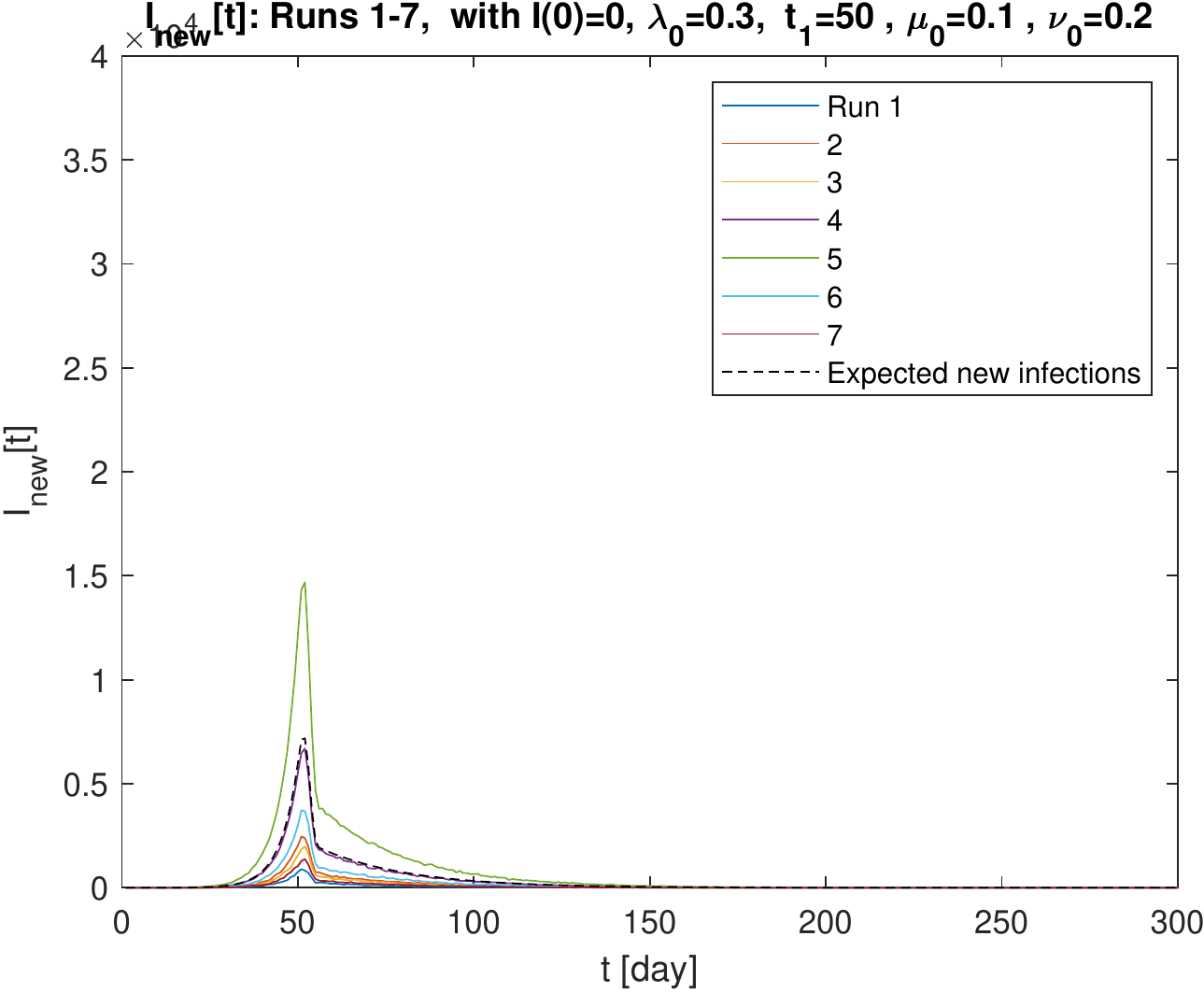}
\caption{\sf New daily infections $I_{new}[t]$, Runs 1-7.}
\label{fig:BDI-I_new[t]-Runs_1-7}
\end{minipage}
\hspace{0.5cm}
\begin{minipage}[t]{0.50\textwidth}
\centering
\includegraphics[width=\textwidth]{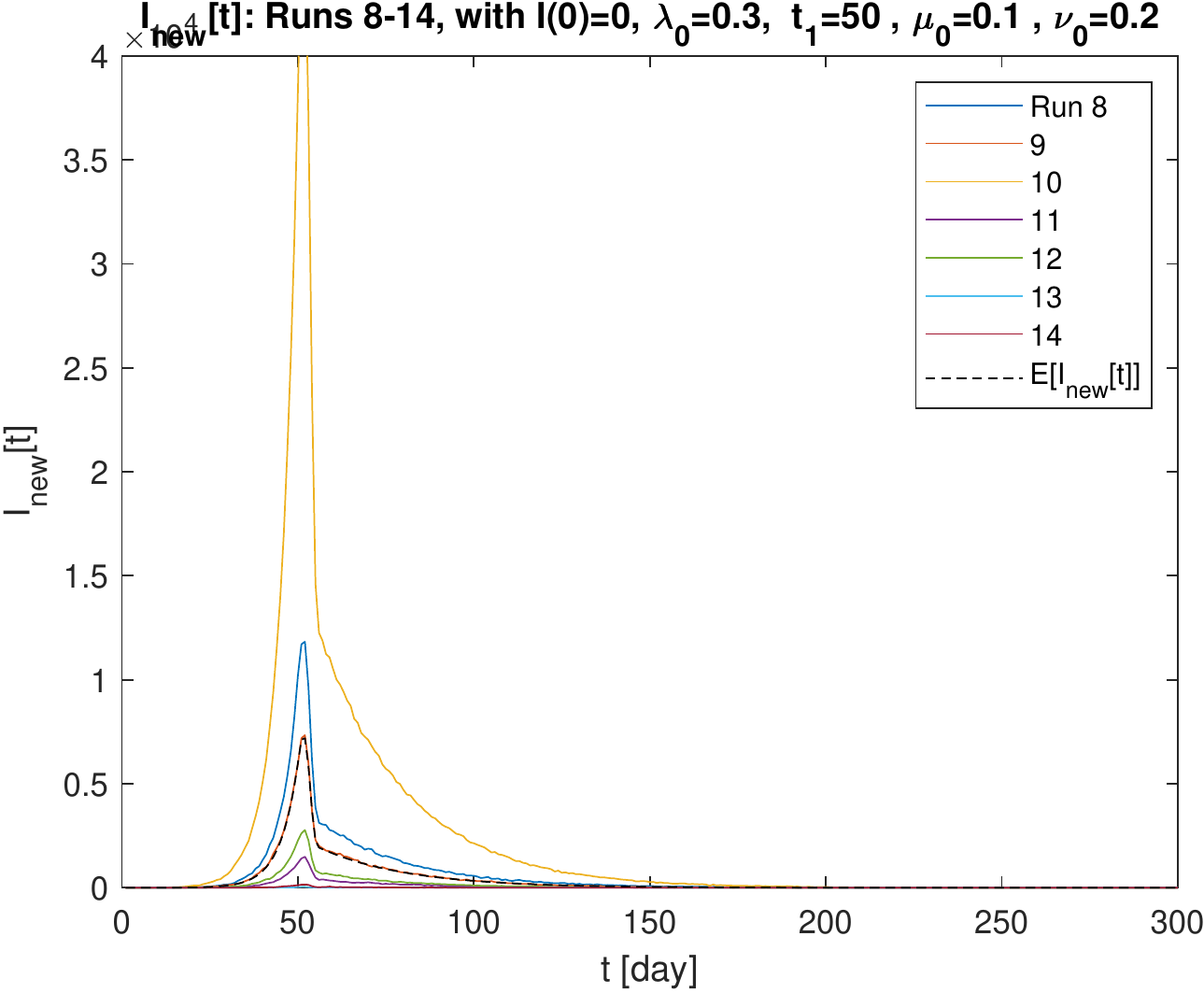}
\caption{\sf New daily infections $I_{new}[t]$, Runs 8-14.}
\label{fig:BDI-I_new[t]-Runs_8-14}
\end{minipage}
\end{figure}
\begin{figure}[hbt]
\begin{minipage}[h]{0.50\textwidth}
\centering
\includegraphics[width=\textwidth]{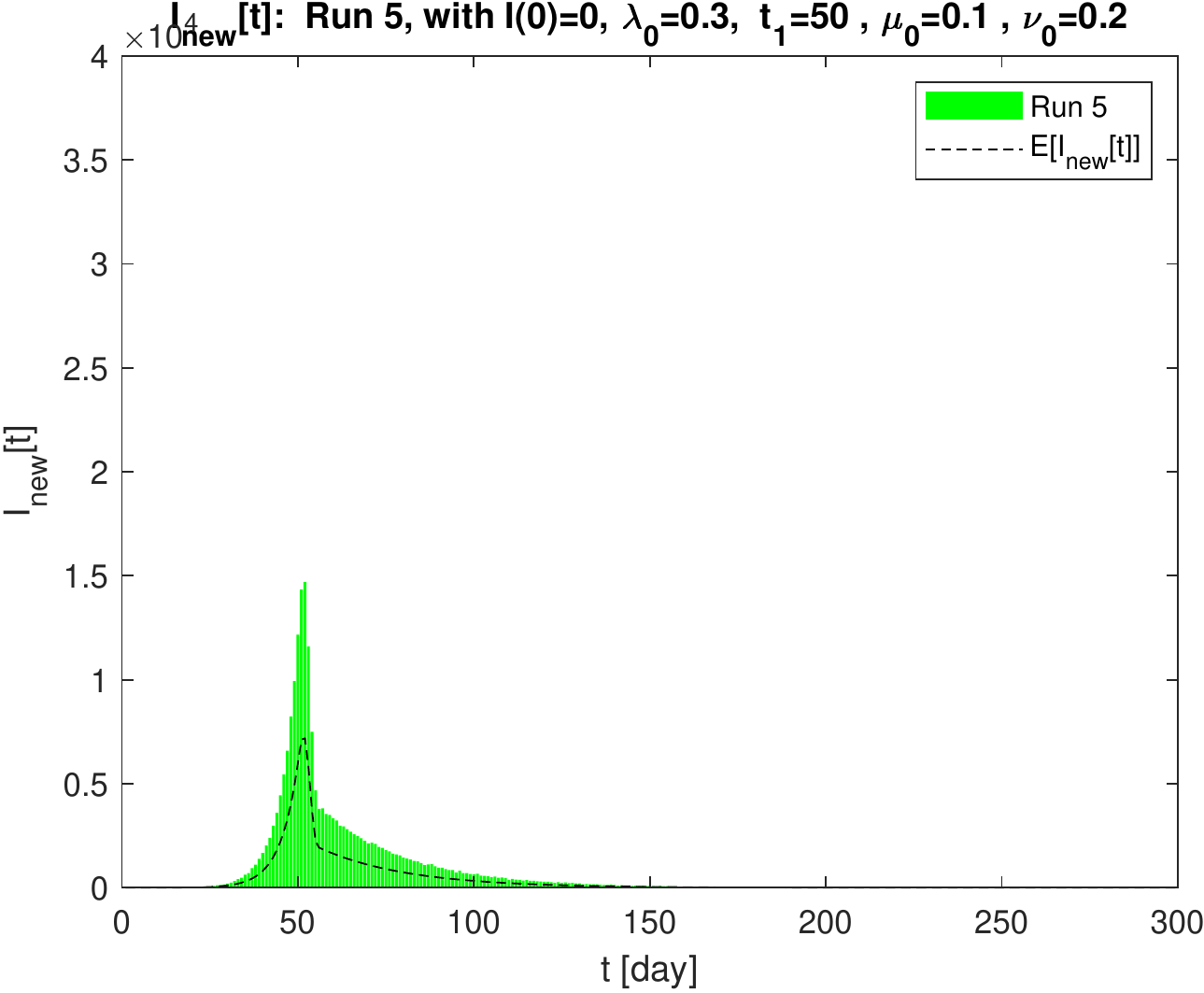}
\caption{\sf New daily infections $I_{new}[t]$, Run 5}
\label{fig:BDI-I_new[t]-Run_5}
\end{minipage}

\end{figure}

\clearpage

\subsection{The Processes $B_{BDI:0}(t)$ and $R_{BDI:0}(t))$}

\begin{enumerate}
\item The variability in the $B(t)$ process is as large as that of $I(t)$.  Among these 14 runs the largest $B(t)$ and the smallest differ by as much as a three order of magnitude (see Figures \ref{fig:BDI-log_B(t)-Runs_1-14}  and
\ref{fig:BDI-log_B(t)-Runs_1-14_25-days}.

\item The process $R(t)$ exhibits similarly large variations among the 14 runs. Although not presented here, our simulation results confirm also the shape of the number of new daily recoveries, denoted $R_{new}[t]$, shown in Figure \ref{fig:New-daily-recoveries}.  They are proportional to $I(t)$ as they should.  The cumulative count of deaths $D(t)$, which is a sub-process of the $R(t)$ behaves similar to $R(t)$.
\end{enumerate}

\begin{figure}[thb]
\begin{minipage}[t]{0.45\textwidth}
\centering
\includegraphics[width=\textwidth]{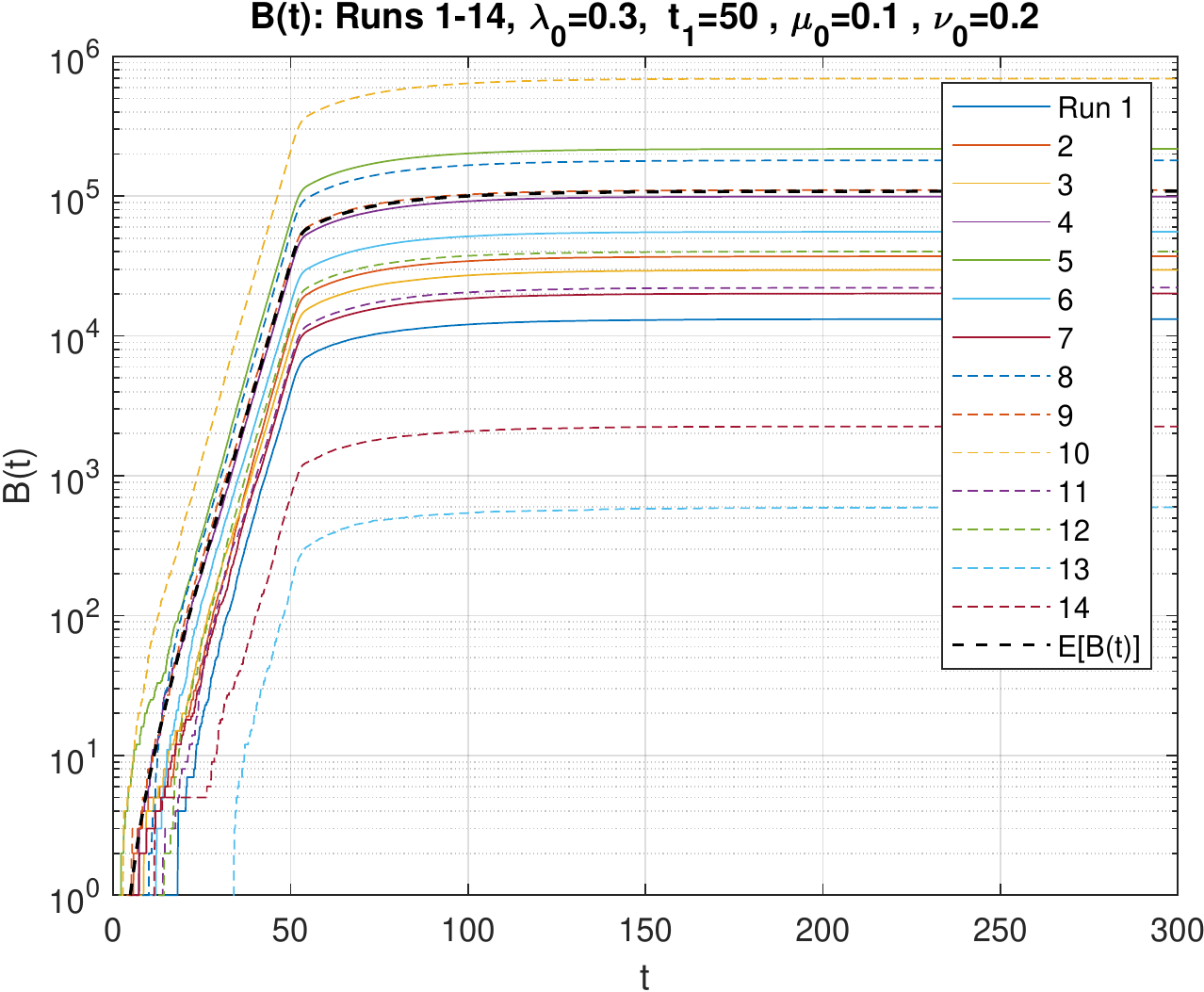}
\caption{\sf Semi-log plot of the cumulative infected $B(t)$, Runs 1-14.}
\label{fig:BDI-log_B(t)-Runs_1-14}
\end{minipage}
\hspace{0.5cm}
\begin{minipage}[t]{0.45\textwidth}
\centering
\includegraphics[width=\textwidth]{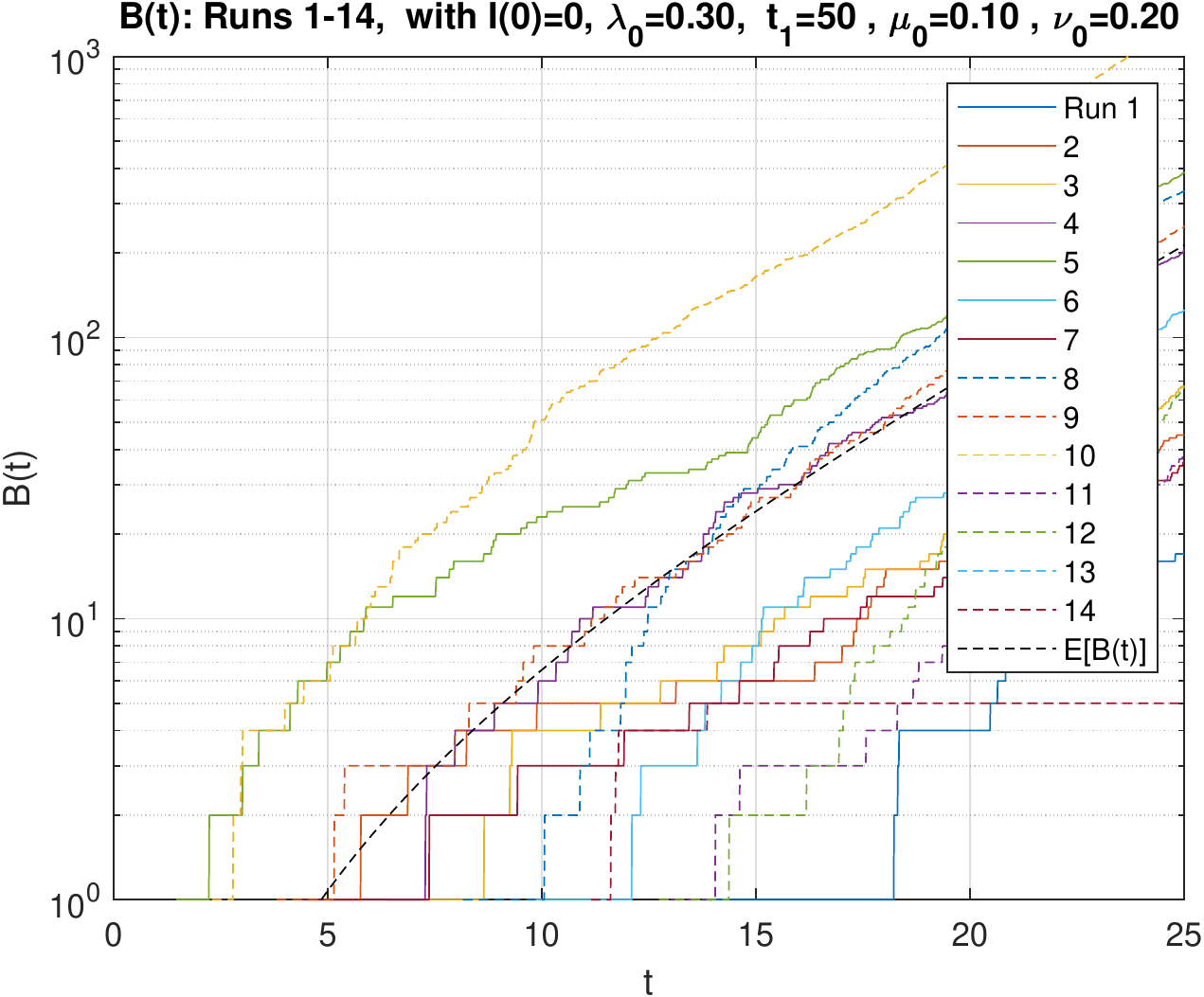}
\caption{\sf Initial 25 days of Semi-log plot of $B(t)$, Runs 1-14.}
\label{fig:BDI-log_B(t)-Runs_1-14_25-days}
\end{minipage}
\end{figure}
\begin{figure}[bht]
\begin{minipage}[b]{0.45\textwidth}
\centering
\includegraphics[width=\textwidth]{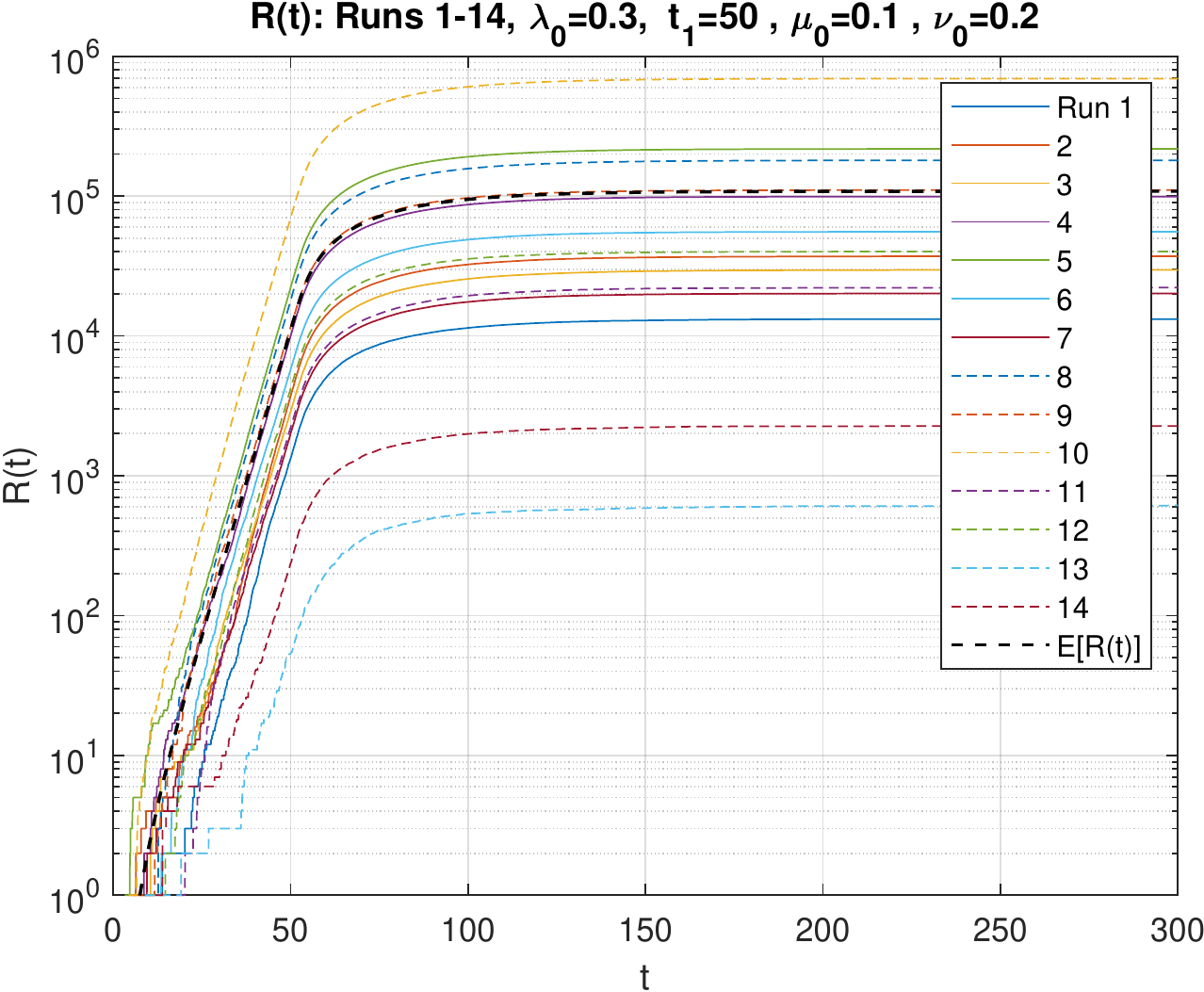}
\caption{\sf Semi-log plot of the cumulative recovered $R(t)$, Runs 1-14.}
\label{fig:BDI-logR(t)-Runs_1-14}
\end{minipage}
\hspace{0.5cm}
\begin{minipage}[b]{0.45\textwidth}
\centering
\includegraphics[width=\textwidth]{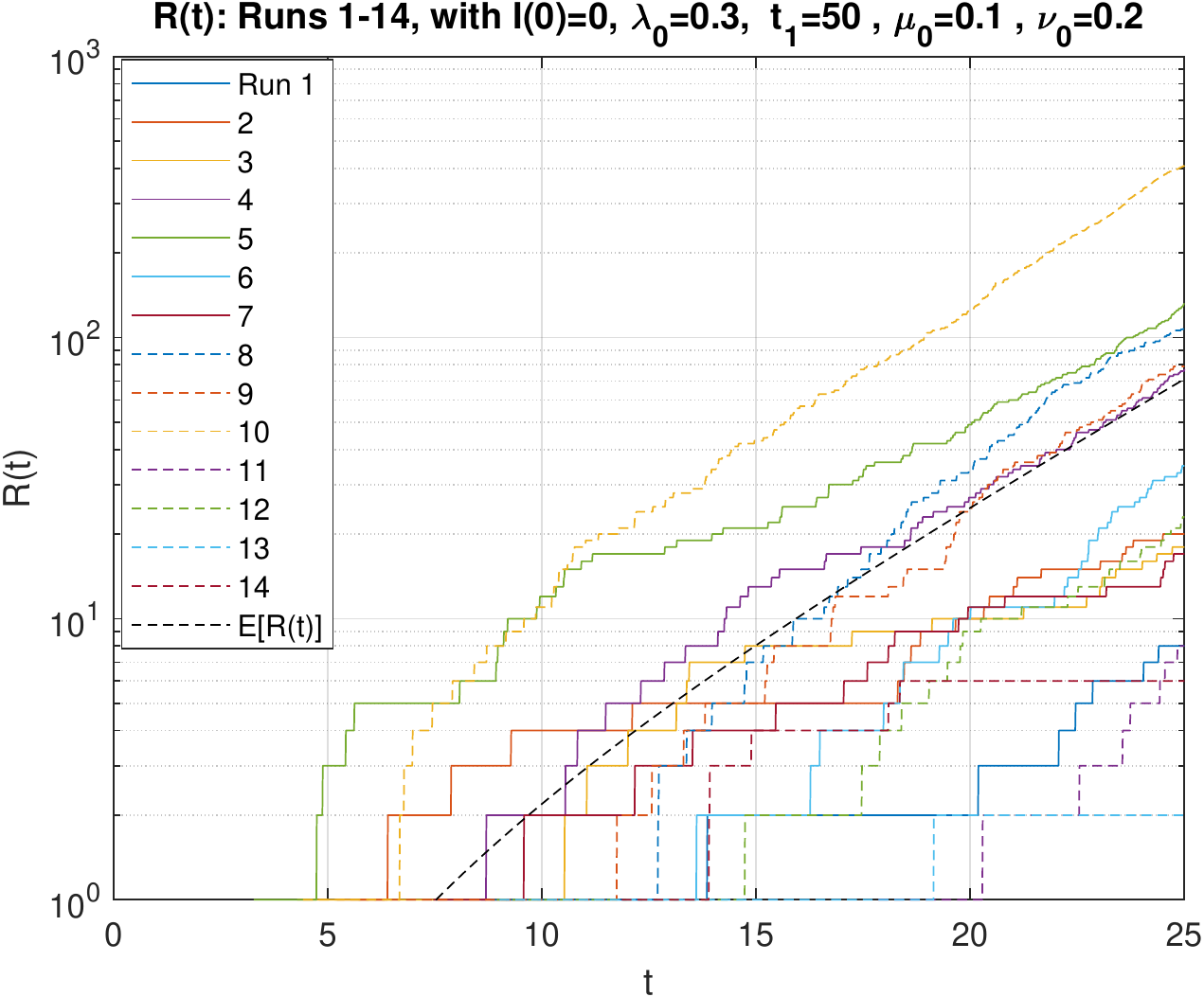}
\caption{\sf Initial 25 days of Semi-log plot of the recovered $R(t)$, Runs 1-14.}
\label{fig:BDI-logR(t)-Runs_1-14_25-days}
\end{minipage}
\end{figure}

\clearpage

\section{Discussion and Future Plans}\label{sec:discussion}

\begin{enumerate}

\item The main result of this report is that we have shown that the PGF of the BDI process with $I_0$  initial population can be represented as
$G_{BDI:I_0}(z,t)=G_{BD:I_0}(z, t)G_{ID:0}(z, t)$, where $G_{BD:I_0}(z,t)$ is the BD process with the initial size $I_0$, and $G_{ID:0}(z,t)$ is the contribution due to immigrants and their descendants. 

\item We have also shown that with the ratio $r(t)=\frac{\nu(t)}{\lambda(t)}$ being kept a constant $r$, the time-dependent PMF (probability mass function) is NBD for all $t$, which is a generalization of the result known heretofore only for the time-homogeneous case.

\item We pursued Bartlett-Bailey's heuristic approach to include the effect of immigration, and have successfully completed their approach by getting the above obtained  solution $G_{BDI:I_0}(z,t)$.

\item For the case where $r(t)\neq r$, we have derived the representation $G_{ID:0}(z,t)=G_{NB(r,\beta)}(z,t)G_c(z,t)$, where the first term is distributed according to NB($r,\beta(t)$), and $G_c(z, t)$ is what we term as a corrective   
process.  Thus, we have the following decomposition of the general BDI process:
\begin{align}
I_{BDI:I_0}(t)=I_{BD:I_0}(t)+I_{I:0}(t)= I_{BD:I_0}(t)+I_{NB(r,\beta)}(t)+I_c(t)
\end{align}
In a forthcoming article \cite{kobayashi:2021bb}, we will present a complete analysis of the process $I_c(t)$.

\item In Sections \ref{sec:nonhomo-BD-simulation}
 and \ref{sec:nonhomo-BDI-simulation}, we reported on simulation results of the BD process analyzed in \cite{kobayashi:2021a} and the BDI process analyzed in Section 1, respectively, and confirmed major findings in the analysis.

\item By extending our results of this article, we will derive the state transition probability, or equivalently the conditional state probability distribution function, i.e., $P^{(BDI)}_{j,k}(u,t)\triangleq \Pr[I_{BDI}(t)=k|I_{BDI}(u)=j]$ of the BDI process. With this information, we will be in a position to predict a future behavior of the process $I(t)$, given its value $I^*$ at an arbitrary instant $t^*\geq 0$. For a large value of $I^*$, however, we may have to resort to an approximation, because an exact computation of probability 
distributions may be computationally expensive.  A saddle-point integration based approximation to convert the PGF to a probability distribution will be pursued \cite{kobayashi:2021d}.  An approach to approximate the process $I(t)$ by a diffusion process or It\^{o} process will be also investigated.

\item Now that we have solved the general time-nonhomogeneous process, our model will be more powerful and useful than has heretofore been expected.  As was claimed earlier, our model has an advantage to the SIR and its variants in that it provides not only probabilistic information, but also is intrinsically linear. Because of the linear property of the model and elegant property that the NBD belongs to the \emph{infinitely divisible distributions} \cite{feller:1968}, we can easily incorporate multiple types of infectious diseases.  The recent development of several variants of COVID-19 should make our modeling approach promising in obtaining accurate analysis and reliable prediction of the behavior of an infectious disease.  Needless to say, the development of a useful estimation algorithm for the model parameters is the most critical step towards a successful application of our model to real situations.  In another forthcoming article \cite{kobayashi:2021c}, we plan to discuss the problem of estimating the model parameters $\lambda(t), \mu(t)$ and $\nu(t)$ from the $I(t)$ and other observable data.  The \emph{Eexpectation-Maximization (EM) algorithm}\footnote{See e.g., \cite{kobayashi-mark-turin:2012}, pp. 559-565.} will be investigated towards this goal.

\end{enumerate}

\appendix

\numberwithin{equation}{section}
\section{The Probability that an Epidemic Terminates}
Let us consider the case with no external arrivals of the infected, i.e., $\nu(t)$ for all $t$.  Then once $I(t)$ reaches zero at some point $T$, then $I(t)$ will be zero for any $t\geq T$. In the population model,   in which the BD process was originally studied, such $T$ is called the time of extinction (of the species under study).  In our context, $T$ is the time when the infection finally comes to an end, i.e.,
\begin{align}
I(t)\left\{\begin{array}{ll}>0&~~\mbox{for}~~ t<T;\\
                            =0&~~\mbox{for}~~ t\geq T.
                             \end{array}\right.
\end{align}
Then, it is not difficult to see that
\begin{align}
\Pr[T\leq t]=\Pr[I(t)=0]. \label{Prob-extinct-by-T}
\end{align}

In Part III-A \cite{kobayashi:2021a}, we showed that 
\begin{align}
\Pr[I(t)=0]\triangleq P_0(t)=\alpha(t)^{I_0}, \label{Prob-I(t)=0}
\end{align}
where $I_0$ is the initial value, i.e., $I(0)$, and the function $\alpha(t)$ was defined by (77), ibid. The numerical plot of $\alpha(t)$ is shown in Figure 14 for our running example, in which $\lambda(t)$ decreases from $\lambda_0=0.3$ to $\lambda_1=0.06$ in the interval from $t_1=50$ till $t_1+d=55$.

Recall that the functions $L(t)$ and $M(t)$ satisfy the following identity (cf. \cite{kobayashi:2021a}, Eqn.(61)).
\begin{align}
L(t)-M(t)+e^{-s(t)}=1,\label{Identity-L-M-s}
\end{align}
Thus, we find an alternative expression for $\alpha(t)$:
\begin{align}
\alpha(t)=1-\frac{1}{e^{s(t)}+L(t)}.  \label{Alt-def-alpha(t)}
\end{align}
The probability that the infection processes eventually terminates will be found by taking the limit $t\to\infty$ :
\begin{align}
\Pr[T<\infty] =\lim_{t\to\infty}P_0(t)=\lim_{t\to\infty}\left(\frac{M(t)}{1+ M(t)}\right)^{I_0}.\label{Prob-T-finite}
\end{align}
Thus, the above probability is one, if and only if
\begin{align}
\lim_{t\to\infty}M(t)=\infty. \label{M(t)-diverge}
\end{align}

As a special case, let us assume $I_0=1$. Since $T$ is a non-negative random variable, $\Ex[T]$ is the area above the area between $\Pr[T\leq t]=\alpha(t)$ and $y=1$ in Figure 4 of \cite{kobayashi:2021a}, i.e.,  
\begin{align}
\Ex[T]=\int_0^\infty \Pr[T>t]\,dt=\int_0^\infty(1-P_0(t))\,dt,
\end{align}
which can be expressed as 
\begin{align}
\Ex[T]=\int_0^\infty \frac{1}{e^{s(t)}+L(t)}\,dt=\int_0^\infty \frac{1}{1+M(t)}\,dt
\end{align}

Recall that the process $I(t)$ is, by definition, a \emph{continuous-time Markov chain} (CTMC)
\footnote{For discussions on Markov chains (or processes),  refer to advanced textbooks on random processes. See e.g., \cite{kobayashi-mark-turin:2012} Chapters 15 \& 16.}.  The non-negative integers 0, 1, 2, \ldots, can be viewed as states, and state 0 of the BD process without immigration is an \emph{absorbing state}.  The time until extinction $T$ discussed above is equivalent to the \emph{first passage time} from state $I_0$ to state 0.

Consider the time-homogeneous case, $\mu(t)=\mu$.  Then
\begin{align}
M(t)=\frac{\lambda}{a}\left(1-e^{-at}\right),~~a=\lambda-\mu.
\end{align}
Then,
\begin{align}
\lim_{t\to\infty}\alpha(t)=\lim_{t\to\infty}\frac{\mu(1-e^{-at})}{a+\mu(1-e^{-at})}
=\left\{\begin{array}{ll}&\frac{\mu}{\lambda},~~\mbox{if}~~a>0;\\
                         & 0,~~\mbox{if}~~a=0;\\
                         & 1,~~\mbox{if}~~a<0,\end{array}\right.
\end{align}
which we obtained in \cite{kobayashi:2021a}, Eqn.(84).

\section{Immigration-and-Death Process: The M(t)/M(t)/$\infty$ Queue}\label{subsec:immigration-death}

Let us assume $\lambda(t)=0$, i.e., no birth, i.e., no internal infections in our context. Since we cannot define $r(t)$ (because $\lambda(t)=0$), we go back to (\ref{G_BDI}). On setting $L(t)=L(u)=0$, we find the PGF of the \emph{immigration and death} process as\footnote{We used in Section 1 the subscript ``ID" to stand for ``immigrants and descendants.''  Its use for ``immigration and death" should be limited in this Appendix only.}
\begin{align}
G_{ID:I_0}(z,t)=\exp\left((z-1)e^{s(t)}\int_0^t\nu(u)e^{-s(u)}\,du\right)\cdot 
\left(1+(z-1)e^{s(t)}\right)^{I_0} \label{G_ID}
\end{align}
Note $s(t)$ is now a function of $\mu(t)$ only:
\begin{align}
e^{s(t)}=e^{-\int_0^t \mu(u)\,du}\triangleq \gamma(t).
\end{align}
Note that the identity formula $L(t)=M(t)+1-s^{-s(t)}$ (see (71) of \cite{kobayashi:2021a}) reduces to
\begin{align}
e^{s(t)}=\frac{1}{M(t)+1}.
\end{align}
This, together with the definition of $\alpha(t)$, leads to the following simple relation:
\begin{align}
\gamma(t)=1-\alpha(t), \label{gamma-alpha}
\end{align}

By defining
\begin{align}
m(t)\triangleq \gamma(t)\int_0^t\frac{\nu(u)}{\gamma(u)}\,du=(1-\alpha(t))\int_0^t\frac{\nu(u)}{1-\alpha(u)}\,du,
\end{align}
we find that the first term of the product form (\ref{G_ID}) corresponds to the contribution by the immigrants who arrived in $(0, t]$:
\begin{align}
G_{I:0}(z, t)=\exp\left((z-1)m(t)\right),
\end{align}
which is the PGF of Poisson distribution with mean $m(t)$.  This solution for the M(t)/M(t)/$\infty$ queue was reported by T. Collngs and C. Stoneman \cite{collings:1976}.

The second term of (\ref{G_ID}) represents the PGF of the \emph{pure death process} with the initial population $I_0$, each of which dies (i.e., departs) independently of each other at rate of $\mu(t)$.  
\begin{align}
G_{D:I_0}(z, t)=\left(1-\gamma(t)+ \gamma(t) z\right)^{I_0}
\end{align}
which gives the binomial distribution:
\begin{align}
P^{(D:I_0)}_k(t)\triangleq \Pr[I_{D:I_0}(t)=k]={I_0\choose k}\gamma(t)^k (1-\gamma(t))^{I_0-k},~~~k=0, 1, 2, \ldots, I_0.
\end{align}

If we assume a constant death rate (i.e., departure rate), $\mu(t)=\mu$, then $\gamma(t)=e^{-\mu t}$ and the mean becomes
\begin{align}
m(t)=e^{-\mu t}\int_0^t \nu(u)e^{\mu u}\,du,
\end{align}
which is found in, e.g., Saaty \cite{saaty:1961} in the analysis of an M(t)/M/$\infty$ queue..

If we assume, in addition, a constant immigration arrival rate $\nu(t)=\nu$, then $m(t)$ reduces to 
\begin{align}
m(t)=\rho\left(1-e^{-\mu t}\right),~~~~\mbox{where}~~\rho=\frac{\nu}{\mu}.
\end{align}
This M/M/$\infty$ queue is discussed in most books on queuing theory and random processes (see e.g., \cite{kobayashi-mark-turin:2012}, pages 422, 703-704, 732-733.

It should be instructive to note that the BDI process gives a negative binomial distribution, whereas its special case, the ID (immigration and death) process gives a Poisson distribution. This is because a negative binomial distribution converges to a Poisson distribution in some limit.\footnote{In the PGF $G(z)=\left(\frac{1-q}{1-qz}\right)^r$ of the negative binomial distribution NB$(r,q)$, let let $q\to 0$ and  $r\to\infty$, so that $rq\sim\nu$, where $\nu$ is a a positive constant.  In the BDI model, this is equivalent to $q\sim\lambda\to 0$.  Then $G(z)
=\left(\frac{1-\frac{\nu}{r}}{1-\frac{\nu z}{r}}\right)^r \to \frac{e^{-\nu}}{e^{-\nu z}}=e^{-\nu(1-z)}$, which is the PGF of a Poisson distribution with mean $\nu$. See e.g., \cite{feller:1968}, p. 281.}

\section*{Acknowledgments}
\addcontentsline{toc}{section}{Acknowledgments}
The author thanks Prof. Brian L. Mark of George Mason University for his advice in use of MATLAB.
He also thanks Dr. Linda Zeger for her careful review of Part III-A.  Her comments and questions helped the author improve the presentation of this report. The author is also thankful to Professor Andrew Viterbi for his careful reading and encouraging opinion of Part I of the report.

\bibliographystyle{ieeetr}
\bibliography{infections}

\end{document}